\renewcommand{\d}{\mathrm{d}}
\newcommand{\E}{\mathbb{E}}
\newcommand*{\largecdot}{\raisebox{-0.25ex}{\scalebox{1.4}{$\cdot$}}}
\newcommand*\bigcdot{\mathpalette\bigcdot@{.5}}
\newcommand*\bigcdot@[2]{\mathbin{\vcenter{\hbox{\scalebox{#2}{$\m@th#1\bullet$}}}}}
\numberwithin{equation}{section}
\numberwithin{figure}{section}
\theoremstyle{plain}
\newtheorem{theorem}{Theorem}[section]
\newtheorem{proposition}[theorem]{Proposition}
\newtheorem{corollary}[theorem]{Corollary}
\theoremstyle{definition}
\newtheorem{definition}[theorem]{Definition}
\newtheorem{assumption}[theorem]{Assumption}
\newtheorem{example}[theorem]{Example}
\newtheorem{remark}[theorem]{Remark}
\title{Canonical insurance models: stochastic equations and comparison theorems}
\author[1]{Marcus C.~Christiansen}
\author[2,$\star$]{Christian Furrer}
\affil[1]{\footnotesize Institut f{\"u}r Mathematik, Carl von Ossietzky Universit{\"a}t Oldenburg, Germany.}
\affil[2]{\footnotesize Department of Mathematical Sciences, University of Copenhagen, Denmark.}
\affil[$\star$]{\footnotesize Corresponding author. E-mail: \href{mailto:furrer@math.ku.dk}{furrer@math.ku.dk}.}
\date{}
\begin{document}

\maketitle

\begin{abstract}

Thiele's differential equation explains the change in prospective reserve and plays a fundamental role in safe-side calculations and other types of actuarial model comparisons. This paper presents a `model lean' version of Thiele's equation with the novel feature that it supports any canonical insurance model, irrespective of the model's intertemporal dependence structure. The basis for this is a canonical and path-wise model construction that simultaneously handles discrete and absolutely continuous modeling regimes. Comparison theorems for differing canonical insurance models follow directly from the resulting stochastic backward equations. The elegance with which these comparison theorems handle non-equivalence of probability measures is one of their major advantages over previous results. 

\end{abstract}

\vspace{5mm}

\noindent \textbf{Keywords:} Implicit options; life insurance; non-Markov models; safe-side criteria; stochastic Thiele equation.

\vspace{5mm}

\section{Introduction}

In actuarial science, safe-side calculations for the prospective reserve based on prudent actuarial bases date back to at least~\cite{Lidstone1905}. In modern times, safe-side calculation results for survival models~\cite{Norberg1985} have been extended to first Markov models~\cite{Hoem1988,RamlauHansen1988,Linnemann1993} and later semi-Markov models~\cite{Niemeyer2015}. The main idea is to carefully analyze the dynamics of the so-called state-wise prospective reserves, described by the celebrated Thiele equation, which allows for the comparison of differing insurance models. Beyond safe-side calculations, such comparison results are essential to modern life insurance mathematics in both resolving the circularity that arises from (implicitly defined) reserve-dependent payments and in the development of efficient computational schemes~\cite{Cantelli1914,Norberg1991,MilbrodtStracke1997,MilbrodtHelbig2008,ChristiansenDenuitDhaene2014}.

Another emerging theme in stochastics relates to model uncertainty and robustness. There is an increasing interest in `model lean' or `model free' approaches. In actuarial multi-state modeling, this is reflected by a recent movement from Markov over semi-Markov~\cite{Christiansen2012,BuchardtMollerSchmidt2015,AhmadBladtFurrer2023,BladtMincaPeralta2023}towards so-called non-Markov modeling~\cite{ChristiansenFurrer2021,Christiansen2021b,Furrer2022,BathkeFurrer2024}. A similar tendency can be observed in the biostatistics literature~\cite{PutterSpitoni2018,Overgaard2019b,Maltzahn2021,Niessl2023}.

In this paper, we derive actuarial comparison theorems for finite state space models with no restrictions whatsoever on the intertemporal dependence structure. In that sense, our approach is `model lean'. Given a canonical insurance model $(\alpha,\Lambda,\Phi,B,b)$ in accordance with Definition~\ref{def:canonical_insurance_model}, which consists of an initial distribution $\alpha$, transition rates $\Lambda$, interest rates $\Phi$, sojourn payments $B$, and transition payments $b$, we show that the corresponding state-wise prospective reserves $(V^i)$ uniquely and surely (path-wise) solve the backward equation
\begin{align*}
0=\mathds{1}_{\{  \Lambda^{i\largecdot}(t-)< \infty\}} I^i(t-)  \bigg(V^i(\d t)   +  B^i(\d t) - V^i(t-) \Phi^i (\d t) + \sum_{j : j\neq i}(b^{ij}(t) + V^j(t)-V^i(t)) \Lambda^{ij}(\d t)\bigg),
\end{align*}
with boundary condition $V^i(T) = 0$ and with $I^i$ indicating whether the process is currently in state $i$, see Theorem~\ref{stochThiele}. This stochastic Thiele equation is then used to establish a wide range of actuarial comparison theorems. Classic results for Markov processes are fully recovered and further refined.

The first stochastic Thiele equation may be found in~\cite{Norberg1992}, but with no clarification on uniqueness of the solution and lack of clarity in the associated definition of state-wise prospective reserves. The latter was already noted in~\cite{Norberg1996}, but not rigorously resolved until investigated in~\cite{ChristiansenFurrer2021}. However, the stochastic Thiele equation proposed in~\cite{ChristiansenFurrer2021} is only an almost sure equation, which can be problematic for comparisons between non-equivalent probabilistic models (actuarial bases). Contrary to~\cite{ChristiansenFurrer2021}, this paper adopts a canonical approach which allows for path-wise statements.

Our canonical approach takes its inspiration from~\cite{Jacobsen2006}, which has hitherho received limited attention in the actuarial literature, see however~\cite{Furrer2022}. Different from \cite{Jacobsen2006}, we take not the distribution of the marked point process or even the compensators of the multivariate counting process as the starting point, but rather so-called (cumulative) transition rates. This comes with its own technical intricacies, but has the advantage that it offers a natural connection to Markov modeling -- corresponding to deterministic transition rates -- as well as the industry practice of composing and applying actuarial risk tables. The approach covers {absolutely continuous as well as discrete} modeling; both are currently common. {Central to both approaches, Jacobsen's and ours, are sure representations for càdlàg martingales. However, Jacobsen takes the existence of a càdlàg modification as given, while we offer an actual construction.}

In~\cite{ChristiansenDjehiche2020} an alternative approach based on backward stochastic differential equations is pursued, also aiming at resolving the circularity arising from implicitly defined payments. The advantage of the canonical approach pursued here is the elegance with which it handles non-equivalent probability measures. The backward stochastic differential equation literature contains a range of comparison theorems{, see for example~\cite{CohenElliot2012}, and the results of this paper may be seen in light of these, but are tailored to the situation with non-equivalent rather than just dominating measures. This is not just to satisfy mathematical curiosity: non-equivalence occurs frequently in the comparison of actuarial models, with the actuary starting from a simpler (non-dominating) model that rules out the occurrence of some event. This could include policyholder behavior events such as surrender, also called lapse, or retirement.} Furthermore, any approach based on backward stochastic differential equations is also limited by the lack of background results outside the absolutely continuous case.

The paper is structured as follows. Sections~\ref{sec:canonical_measurable_space}--\ref{sec:canonical_prob_model} provide the construction of canonical probability models generated by (cumulative) transition rates, culminating with Theorem~\ref{TheoremCanoncialSpace}. The following two  sections contain key technical results. Section~\ref{sec:kolmogorov} deals with the time-dynamics of conditional expectations of state indicator processes, {leading to} a stochastic version of the Kolmogorov backward equation, see Theorem~\ref{GeneralizKolmogBackwardEq}, while Section~\ref{sec:statewise} concerns the extension from state indicator processes to a wider class of processes. In the final two sections, the narrative shifts towards actuarial science with Section~\ref{sec:thiele} devoted to the stochastic Thiele equation and Section~\ref{sec:comparison} to comparison theorems.

\section{Canonical measurable space}\label{sec:canonical_measurable_space}

Status data from an individual life insurance policy is usually of the form
\begin{center}
\begin{tabular}{c|c}
	\hline
	date & status \\
	\hline
$t_0$ & $z_0$ \\
$t_1$ & $z_1$ \\
$t_2$ & $z_2$ \\
$\vdots $& $\vdots$\\
\hline
\end{tabular}
\end{center}
with ordered time points $t_0< t_1 < t_2 <\cdots $ from the time set $[0,\infty)$ and states $z_0 \neq z_1 \neq z_2 \neq \cdots $  from a finite state space $\mathcal{Z}$. By convention, let $t_0=0$ be the starting time of the individual insurance contract. {Further, without loss of generality let $\mathcal{Z}\subset \mathbb{N}$.} The total number of status updates may be countably infinite on the full time line, but on bounded time intervals the number of status updates shall be at most finite. For a convenient notation, if there are only finitely many updates in total, we extend the update sequence to a countably infinite sequence by adding artificial data points  $(\infty, \nabla)$. Let $\bar{\mathcal{Z}}:= \mathcal{Z}\cup \{\nabla\}$. All in all, the  set of potential status developments is
\begin{align*}
	\Omega :=\big\{ (t_k,z_k)_{k \in \mathbb{N}_0}  \in [0,\infty]^{\mathbb{N}_0}  \times \bar{\mathcal{Z}}^{\mathbb{N}_0}  :  t_0&=0,\, t_k < t_{k+1} \textrm{ for } t_k< \infty, t_k=t_{k+1} \textrm{ for }t_k= \infty,\\  z_k& \neq z_{k+1}  \textrm{ for } t_k < \infty , z_k =\nabla  \textrm{ for } t_k =\infty ,\,\sup_k t_k =\infty\}.
\end{align*}
The projection mappings
\begin{align*}
	&\tau_k : \Omega \rightarrow [0,\infty], \quad  \tau_k(\omega) \mapsto t_k, \\
	&\zeta_k : \Omega \rightarrow \bar{\mathcal{Z}}, \quad \zeta_k(\omega) \mapsto z_k, 
\end{align*}
 define a \textbf{marked point process}
\begin{align*}
	(\tau_k, \zeta_k)_{k \in \mathbb{N}_0}.
\end{align*}
An alternative way of representing the insurance data is the \textbf{multivariate counting process} 
\begin{align*}
N = (N^{ij})_{i,j \in \mathcal{Z}: i \neq j}, \quad	N^{ij}: [0,\infty) \times \Omega \rightarrow \mathbb{N}_0,
\end{align*}
defined by
\begin{align*}
	N^{ij}(t) :=  \sum_{ k \in \mathbb{N} } \mathds{1}_{\{ \tau_k \leq t, \zeta_{k-1}=i, \zeta_{k}=j\}}.
\end{align*}
The multivariate counting process $N$ combined with the initial state $\zeta_0$ carries the same information as the marked point process. Note that the paths are càdlàg and have at most finitely many jumps in finite time. A third option to represent the insurance data is the \textbf{multivariate state occupation process}
\begin{align*}
	I=(I^i)_{i \in \mathcal{Z}}, \quad I^i: [0,\infty) \times \Omega \rightarrow \{0,1\},
\end{align*}
defined by
\begin{align*}
	I^{i}(t) := \sum_{k \in \mathbb{N}_0}\sum_{i \in \mathcal{Z}} \mathds{1}_{\{\tau_k \leq t < \tau_{k+1}\}}  \mathds{1}_{\{\zeta_k=i\}}.
\end{align*}
Again, the paths are càdlàg  and have at most finitely many jumps in finite time. The counting processes and state occupation processes satisfy the fundament relation
\begin{align}\label{RelationIN}
	I^i(t) - I^i(s) = \sum_{j:j \neq i} \int_{(s,t]} \big( N^{ji}(\d u) -  N^{ij}(\d u) \big).
\end{align}
A fourth option to  represent the insurance data is the \textbf{multi-state process}
\begin{align*}
	Z: [0,\infty) \times \Omega \rightarrow \mathcal{Z}
\end{align*}
defined by
\begin{align*}
	Z(t) := \sum_{k \in \mathbb{N}_0}\mathds{1}_{\{\tau_k \leq t < \tau_{k+1}\}}  \zeta_k = \sum_{i \in \mathcal{Z}}   i \, I^i(t) .
\end{align*}
Also this process has càdlàg paths with at most finitely many jumps in finite time. For notational convenience, we define $Z(0-) := Z(0)$.

The development of the observable information for an individual insurance contract is described by the filtration $\mathcal{F}=(\mathcal{F}_t)_{t \in [0,\infty)}$ defined by
\begin{align*}
	\mathcal{F}_t := \sigma( \mathds{1}_{\{\tau_k \leq t\}} (\tau_k,\zeta_k): k \in \mathbb{N}_0, i \in \mathcal{Z})
\end{align*}
Note that
\begin{align*}
\mathcal{F}_t =& \sigma (Z(0), N^{ij}(s): s \leq t,\,  i,j \in \mathcal{Z}, \, i \neq j)\\
	=& \sigma (I^{i}(s): s \leq t,\,  i \in \mathcal{Z})\\
	=& \sigma (Z(s): s \leq t),
\end{align*}
which means that the marked point process, the multivariate counting process together with the initial state, the multivariate occuptation process, and the multi-state process all generate the same information. Consequently, on the intere time line, all these processes are measurable with respect to the $\sigma$-algebra
\begin{align*}
\mathcal{F}_{\infty} := \sigma (\mathcal{F}_u: u \in [0,\infty) ).
\end{align*}
While the $\sigma$-algebra $\mathcal{F}_t$ describes the observable information of the present and the past,  the interval $[0,t]$, the observable information of the past only, corresponding to the interval $[0,t)$, is given by 
\begin{align*}
	\mathcal{F}_{t-}:= \sigma( \mathds{1}_{\{\tau_k < t\}} (\tau_k,\zeta_k): k \in \mathbb{N}_0, i \in \mathcal{Z})
\end{align*}
Note that $\mathcal{F}_{0-}$ is the trivial $\sigma$-algebra and, similar to before,
\begin{align*}
\mathcal{F}_{t-}
=& \sigma (Z(0) \mathds{1}_{\{t>0\}}, N^{ij}(s): s < t,\,  i,j \in \mathcal{Z}, \, i \neq j)\\
=& \sigma (I^{i}(s): s < t,\,  i \in \mathcal{Z})\\
=& \sigma (Z(s): s < t)\\
= & \sigma (\mathcal{F}_s: s< t).
\end{align*}
The family of the left-limit $\sigma$-algebras
\begin{align*}
\mathcal{F}^-:=(\mathcal{F}_{t-})_{t \in [0,\infty)}
\end{align*}
is also a filtration.

The information provided by the first $n \in \mathbb{N}_0$ elements of the marked point process  is given by the $\sigma$-algebra
\begin{align*}
	\mathcal{G}_n:= \sigma ( (\tau_k, \zeta_k): k\leq n).
\end{align*}
For any $\sigma$-algebra $\mathcal{A}$ on $\Omega$ and any  event  $B \subset \Omega$, the so-called trace $\sigma$-algebra  is defined by $\mathcal{A} \cap B:= \{A\cap B: A \in \mathcal{A}\}$.	For each   $t \in [0,\infty) $ and $n \in \mathbb{N}_0$, it holds that
	\begin{align}\begin{split}\label{GnFtEquivalence}
		\mathcal{G}_n \cap \{ \tau_n \leq t < \tau_{n+1} \} & = 		\mathcal{F}_t \cap \{ \tau_n \leq t < \tau_{n+1} \},\\
	\mathcal{G}_n \cap \{ \tau_n < t \leq \tau_{n+1} \}  &= 		\mathcal{F}_{t-} \cap \{ \tau_n < t \leq \tau_{n+1} \} .
\end{split}\end{align}
The $\sigma$-algebra $\mathcal{G}_n$ is equivalent to the stopping time $\sigma$-algebra $\mathcal{F}_{\tau_n}$:
\begin{align}\label{FtaunGn}
	\mathcal{F}_{\tau_n}:= \Big\{ A \in \mathcal{F}_{\infty}: A \cap \{ \tau_n \leq t \}  \in  \mathcal{F}_t, \,  t \geq 0 \Big\}= \mathcal{G}_n.
\end{align}
For each scenario $\omega =(t_k,z_k)_{k \in \mathbb{N}_0} \in\Omega $ and tuple $(s,i) \in [0,\infty) \times \mathcal{Z}$, we define a so-called  $(s,i)$-stopped status development $\omega_s^i$ by 
\begin{align}\label{omegastopped}
	\omega_{s}^i:= \begin{cases} ((t_0,z_0),\ldots, (t_n,z_n),  (\infty, \nabla) , \ldots) &:\, t_n < s \leq t_{n+1}, z_n=i,\\
		 ((t_0,z_0), \ldots,(t_n,z_n), (s,i),  (\infty, \nabla) , \ldots) &:\, t_n < s \leq t_{n+1}, z_n \neq i, \\
		 ((0,i), (\infty, \nabla) , \ldots) &:\, s = 0.
	\end{cases}
\end{align}
The stopped status development $\omega^{i}_s$ is still an element of $\Omega$. The $(s,i)$-stopping  leaves  the multi-state process $Z$ unchanged on $[0,s)$ and makes it constantly equal to $i$ on $[s,\infty)$.

\section{Transition rates and initial distribution}

There are several ways to specify a probability measure $\mathbb{P}$ on the measurable space $(\Omega,\mathcal{F}_{\infty})$. Our approach is to start from the representation
\begin{align*}
\mathcal{F}_{\infty} = \sigma (Z(0), N^{ij}: i,j \in \mathcal{Z}, \, i \neq j)
\end{align*}
and to specify the distribution of $Z(0)$ and $N$. We assume that we have the  \textbf{initial distribution} of $Z$, specified by a distribution function
\begin{align*}
\alpha:\mathcal{Z}\rightarrow  [0,1], \quad \sum_{i \in \mathcal{Z}} \alpha (i)=1,
\end{align*}
for the initial state $Z(0)$, and we assume that we have \textbf{(cumulative) transition rates}
\begin{align*}
\Lambda=(\Lambda^{ij})_{i,j \in \mathcal{Z}: i \neq j}, \quad \Lambda^{ij}:[0,\infty)\times \Omega \rightarrow \mathbb{R},
\end{align*}
for the multivariate counting process $N$. The total (combined) transition rate for leaving a current state $i \in \mathcal{Z}$ is in brief written as
\begin{align*}
	\Lambda^{i\largecdot} := \sum_{j:j \neq i} \Lambda^{ij}.
\end{align*}
The increments of $\Lambda^{ij}$  are meant to describe the expected increments of $N^{ij}$, conditional on the past information and provided that the last state was $i$, symbolically written as
\begin{align}\label{IntuitFormulaLambda}
	\Lambda^{ij}(\d t) = \E[ N^{ij}(\d t) | \mathcal{F}_{t-}],  \quad Z(t-)=i.
\end{align}
This intuitive equation is not mathematically precise. For a rigorous definition, we need to disentangle the information time variable from the integration variable. Based on the observation that
\begin{align*}
	\mathcal{F}_{t-}  \cap \{ s < t \leq \tau(s)\}& =  \mathcal{F}_{s}  \cap \{ s < t \leq \tau(s)\}, 
\end{align*}
for any integrable random variable $Y$ one can show that almost surely
\begin{align}\label{OldRepresentationExpectation}
	 \E[Y| \mathcal{F}_{t-}]=  \frac{\E[\mathds{1}_{\{\tau(s) \geq t\}}Y| \mathcal{F}_{s}]}{\mathbb{E}[\mathds{1}_{\{\tau(s) \geq t\}}|  \mathcal{F}_{s}]}, \quad s < t \leq \tau(s),
\end{align}
compare with Remark~4.2.3 in~\cite{Jacobsen2006}. Here the random variable $\tau(s)$ is defined as the first jump of $Z$ on $(s, \infty]$,
\begin{align*}
	\tau (s):= \sum_{n \in \mathbb{N}_0} \tau_{n+1} \mathds{1}_{\{\tau_n \leq s < \tau_{n+1}\}}.
\end{align*}
Based on~\eqref{OldRepresentationExpectation}, we rewrite the symbolical characterization~\eqref{IntuitFormulaLambda} of $\Lambda^{ij}$ to obtain the mathematically rigorous equation
\begin{align}\label{MathCharactOfCumTransitionRates}
	\Lambda^{ij}(\d t) = \frac{\E[\mathds{1}_{\{\tau(s) \geq t\}} N^{ij}(\d t )| \mathcal{F}_{s}]}{\mathbb{E}[\mathds{1}_{\{\tau(s) \geq t\}}|  \mathcal{F}_{s}]}, \quad  Z(t-)=i,\,s < t \leq \tau(s).
\end{align}
Throughout, we use for any subintervals $I, J$ of $[0,\infty)$ the shorthand notation
\begin{align*}
F (\d t) = H(t) G(\d t)  \;  \forall t \in I \quad \Longleftrightarrow \quad   \int_{J} F (\d t) = \int_J H(t) G(\d t)  \; \forall J \subset I{.}
\end{align*}
It is worthwhile to note that~\eqref{MathCharactOfCumTransitionRates} already reveals a certain ambiguity or flexibility in the choice of (cumulative) transition rates. To see this, consider
\begin{align*}
C=(C^j)_{j \in \mathcal{Z}}, \quad C^j:[0,\infty)\times \Omega \rightarrow \mathbb{R},
\end{align*}
defined by
\begin{align}\label{DefinitionCompensator}
C^j(\mathrm{d}t) := \sum_{i \in \mathcal{Z}} I^i(t-) \Lambda^{ij}(\mathrm{d}t), \quad t\geq0.
\end{align}
Then~\eqref{MathCharactOfCumTransitionRates} would still hold with $\Lambda^{ij}$ replaced by $C^j$. Later, we shall demonstrate that $C^j$ is a compensator of
\begin{align}\label{DefinitionTotalCounts}
N^j := \sum_{i:i \neq j} N^{ij}.
\end{align}
To remove this ambiguity and obtain an approach that is consistent with the specification of Markov processes through deterministic (cumulative) transition rates, we shall require that
\begin{align}\label{NoAmbioguityTransitionRates}
\Lambda^{ij}(t)(\omega) = \Lambda^{ij}(t)(\omega_{\tau_n(\omega)}^i), \quad \tau_n (\omega)  < t \leq \tau_{n+1}(\omega).
\end{align}
We therefore, all in all, make the following technical assumptions for $\Lambda$:
\begin{assumption}\label{as:tech_lambda}\leavevmode
\begin{itemize}
	 \item[(a)] {For $0 \leq t \leq \tau_1$, let $\Lambda(t)$ be deterministic. For $n\in\mathbb{N}$ and $\tau_n < t \leq \tau_{n+1}$, let $\Lambda^{ij}(t)$ be measurable with respect to
	 \begin{align*}
	 \sigma(\tau_0,\zeta_0,\ldots,\tau_{n-1},\zeta_{n-1},\mathds{1}_{\{\zeta_{n-1} \neq i\}}\tau_n), \quad i\in\mathcal{Z}.
	 \end{align*}}
	 \item[(b)] Let $\Lambda$ be right-continuous and, except in a finite number of points in every finite time interval, non-decreasing with
    \begin{align*}
    \Delta \Lambda^{i\largecdot}(t) \leq 1, \quad i \in \mathcal{Z},\, t >0.
    \end{align*}
      \item[(c)] If $\Lambda^{ij}(\cdot)(\omega)$ jumps downward at time $r>0$, denoted as \textbf{reset point}, let  
      $$ \Lambda^{ij}(r-)(\omega) = \infty, \quad \Lambda^{ij}(r)(\omega) = 0.$$
	\item[(d)] 	For any sequence $i_{0}\neq i_1 \neq \cdots \neq  i_{n}$  of states  in $ \mathcal{Z}$ with $i_0=i_n$,  let at least one of  the transition rates $\Lambda^{i_0i_1}, \ldots, \Lambda^{i_{n-1}i_{n} }$ be bounded (uniformly on $\Omega$) on finite intervals.
\end{itemize}
\end{assumption}
Assumption~\ref{as:tech_lambda}(a) {guarantees that $\Lambda$ does not use superfluous information and is an equivalent assumption} to~\eqref{NoAmbioguityTransitionRates}. The right-continuity of Assumption~\ref{as:tech_lambda}(b) follows the right-continuity convention for the counting processes, while the upper jump bound prevents the transition probabilities from becoming greater than one.  An upward jump in a cumulative transition rate corresponds to a discrete probability mass for the corresponding transition. The monotony statement in Assumption~\ref{as:tech_lambda}(b) prevents the transition probabilities from becoming negative, but we allow for downward jumps at so-called reset points, of which every path has at most a finite number in every finite time interval. Reset points are necessary when transition probabilities converge continuously to one in finite time, because such a convergence implies a pole for the cumulative transition rate, which must be reset to continue the model after the pole, confer with Assumption~\ref{as:tech_lambda}(c). The downward jump is not a probability mass and must be separated when calculating probabilities. Assumption~\ref{as:tech_lambda}(d) is a sufficient condition to exclude explosions of the counting processes. {In particular, the assumptions ensure that $\Lambda^{ij}$ and $C^j$ are $\mathcal{F}^-$-adapted.}

\section{Canonical probability model}\label{sec:canonical_prob_model}

This section shows that the initial distribution $\alpha$ and the transition rates $\Lambda=(\Lambda^{ij})_{i,j:i \neq j}$ uniquely define a probability measure $\mathbb{P}$ on the  measurable space $(\Omega,\mathcal{F}_{\infty})$.
In addition, we  aim to define conditional probability measures 
\begin{align}\label{ProbKernels}
	\mathbb{P}_{s}^i[\,\cdot\, ]=\mathbb{P}[\, \cdot \,  |  \mathcal{F}_{s-}, Z(s)=i], \quad s \in[0,\infty),\, i \in \mathcal{Z}.
\end{align}
More precisely, we are looking for $\mathcal{F}_{s-}$-measurable probability kernels $\mathbb{P}_{s}^i$, $s \in[0,\infty)$, $ i \in \mathcal{Z}$,  that satisfy almost surely for each  $A \in \mathcal{F}_{\infty}$ the equation
\begin{align} \label{DefOfProbKernel}
	I^i(s) \mathbb{P}_{s}^i[A] =  I^i(s) \mathbb{P}[ A | \mathcal{F}_s].
\end{align}
Given the probability kernels  $\mathbb{P}_{s}^i$, $s \in[0,\infty)$, $ i \in \mathcal{Z}$, we  moreover define corresponding conditional expectations by
\begin{align}\label{DefExpectIntTheProof}
	\E_{s}^i[\, \cdot \,]:= \int_{\Omega} (\cdot) \d \mathbb{P}_{s}^i.
\end{align}

\begin{theorem} \label{TheoremCanoncialSpace}
	There exist unique $\mathcal{F}_{s-}$-measurable  probability kernels $\mathbb{P}_{s}^i$, $s \in [0,\infty)$, $i\in\mathcal{Z}$,  on $(\Omega, \mathcal{F}_{\infty})$   such that 
\begin{enumerate}
    \item[(i)]  For $s \in [0,\infty)$, $\omega \in \Omega$  and $i,j \in \mathcal{Z}$ with $j \neq i$, it holds that
\begin{align*}
 \Lambda^{ij}(\d t)(\omega) = \frac{\E_s^i[\mathds{1}_{\{\tau(s) \geq t\}} N^{ij}(\d t )](\omega)}{\mathbb{E}_s^i[\mathds{1}_{\{\tau(s) \geq t\}}](\omega)} 
\end{align*}
for those $t \in (s,\infty)$ such that neither $Z(\cdot )(\omega)$ has a jump in $(s,t)$ nor $\Lambda^{i \largecdot}(\cdot)(\omega)$ has a pole or a jump of size $+1$ in $(s,t{]}$. 
\end{enumerate}
    Further, there exists a unique probability measure $\mathbb{P}$ on $(\Omega, \mathcal{F}_{\infty})$ such that
    \begin{enumerate}
\item[(ii)] Equation~\eqref{DefOfProbKernel} holds almost surely for $A \in \mathcal{F}_{\infty}$, $s \in [0,\infty)$,
	\item[(iii)] {f}or  $i \in \mathcal{Z}$, it holds that  
   \begin{align*}
   	 \alpha(i) =\mathbb{P}[Z(0) = i  ].
   \end{align*}
	\end{enumerate}
\end{theorem}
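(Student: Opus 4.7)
My plan is to construct the measure $\mathbb{P}$ via an Ionescu--Tulcea extension, building the conditional distribution of each $(\tau_{n+1}, \zeta_{n+1})$ given $\mathcal{G}_n$ from the transition rates, and then reading off the kernels $\mathbb{P}_s^i$ from a parallel construction that is started from the stopped path $\omega_s^i$. The overall scheme mirrors Chapter~4 of~\cite{Jacobsen2006}, but with cumulative transition rates, rather than compensators, as input data.

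\textbf{Step 1 (one-step kernel from rates).} Fix $n \in \mathbb{N}_0$ and a history $(\tau_0, \zeta_0, \ldots, \tau_n, \zeta_n)$ with $\tau_n < \infty$ and $\zeta_n = i \in \mathcal{Z}$. By Assumption~\ref{as:tech_lambda}(a), the restriction of each $\Lambda^{ij}$ to $(\tau_n, \tau_{n+1}]$ is a deterministic function once this history is fixed. I specify the conditional law of $(\tau_{n+1}, \zeta_{n+1})$ through a product-integral survival function
\begin{align*}
\mathbb{P}(\tau_{n+1} > t \mid \mathcal{G}_n) = \prod_{(\tau_n, t]}\bigl(1 - \Lambda^{i\largecdot}(\d u)\bigr),
\end{align*}
taken in the Kolchin--Doléans-Dade sense and split at each reset point $r\in(\tau_n,t]$ by restarting the product; thanks to Assumption~\ref{as:tech_lambda}(c) the survival function hits $0$ at $r-$, so the reset is probabilistically inert. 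Conditional on $\tau_{n+1}=t$, I set $\zeta_{n+1}=j$ with probability $\Delta\Lambda^{ij}(t)/\Delta\Lambda^{i\largecdot}(t)$ when $t$ is an atom of $\Lambda^{i\largecdot}$, and otherwise via the Radon--Nikodym density of $\Lambda^{ij}$ with respect to $\Lambda^{i\largecdot}$. Assumption~\ref{as:tech_lambda}(a) gives the joint measurability in the history needed to view this as a probability kernel.

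\textbf{Step 2 (global measure and verification of (i)--(iii)).} Combining these one-step kernels with $\alpha$ on $\zeta_0$, the Ionescu--Tulcea theorem yields a unique probability measure on $([0,\infty]\times\bar{\mathcal{Z}})^{\mathbb{N}_0}$ making $(\tau_k,\zeta_k)_{k\in\mathbb{N}_0}$ a chain with the prescribed kernels. To confine its support to $\Omega$, I check non-explosion $\sup_k\tau_k=\infty$ almost surely, which follows from Assumption~\ref{as:tech_lambda}(d) via a standard comparison along any cycle of states against a counting process with bounded compensator. Property~(iii) is immediate from the construction, and~(i) is verified on $(\tau_n,\tau_{n+1}]$ by computing the two conditional expectations in its numerator and denominator: outside poles, atoms of size $1$, and reset points, the ratio reduces through the definition of the product integral to exactly $\Lambda^{ij}(\d t)$.

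\textbf{Step 3 ($\mathbb{P}_s^i$ and uniqueness).} For each $s\in[0,\infty)$ and $i\in\mathcal{Z}$, I rerun the same Ionescu--Tulcea procedure, but pre-loaded with $\omega_s^i$ as the fixed initial history and starting the next jump on $(s,\infty)$ from state $i$. Assumption~\ref{as:tech_lambda}(a) ensures that the resulting $\mathbb{P}_s^i[\,\cdot\,](\omega)$ is $\mathcal{F}_{s-}$-measurable in $\omega$, and~\eqref{DefOfProbKernel} follows from the restriction property of Ionescu--Tulcea kernels on the $\mathcal{F}_s$-event $\{Z(s)=i\}$, combined with the trace identities~\eqref{GnFtEquivalence}. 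Uniqueness of both $\mathbb{P}$ and the kernels $\mathbb{P}_s^i$ is then proved by induction on $n$: any candidate satisfying~(iii) and the characterization~(i) on $(\tau_n,\tau_{n+1}]$ is forced to agree with the product-integral survival function and the atomic or density-based distribution of $\zeta_{n+1}$ constructed in Step~1.

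\textbf{Main obstacle.} The delicate point is the joint handling of reset points with atoms of $\Lambda^{i\largecdot}$ of mass strictly less than one and with absolutely continuous parts: the product integral must be defined piecewise between resets and poles while preserving measurability in the history, and it must then be shown to reproduce~(i) in precisely the regime excluded by the theorem (no intermediate jump of $Z$, no pole, no unit atom). A secondary subtlety is that $\mathbb{P}_s^i$ must be $\mathcal{F}_{s-}$-measurable, not merely $\mathcal{F}_s$-measurable, which is where the partial-information measurability built into Assumption~\ref{as:tech_lambda}(a), via the $\mathds{1}_{\{\zeta_{n-1}\neq i\}}\tau_n$ term, becomes essential.
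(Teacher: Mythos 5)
Your proposal follows essentially the same route as the paper: explicit one-step laws for $(\tau_{n+1},\zeta_{n+1})$ built from product-integral survival functions of $\Lambda$ (the paper writes these as $p_s^i$ and $p_s^{ij}(\d t)=p_s^i(t-)\Lambda^{ij}(\d t)$, which is equivalent to your atom/density split), an Ionescu--Tulcea extension on an enlarged space with non-explosion deduced from Assumption~\ref{as:tech_lambda}(d), kernels $\mathbb{P}_s^i$ obtained by rerunning the construction from the stopped path $\omega_s^i$ with $\mathcal{F}_{s-}$-measurability coming from Assumption~\ref{as:tech_lambda}(a), and uniqueness by showing that property~(i) forces the one-step kernels (the paper makes this precise via a Volterra integral equation whose unique solution is the product integral). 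The argument is correct and matches the paper's proof in all essentials.
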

Property~(i) refers to Equations~\eqref{MathCharactOfCumTransitionRates}--\eqref{NoAmbioguityTransitionRates} and helps clarify in what sense $\Lambda$ indeed represents the transition rates with respect to the probability measure $\mathbb{P}$.
\begin{proof}
We start by explicitly constructing the probability kernels and the probability measure, and we do this on the extended measurable space  $( \widetilde{\Omega},  \widetilde{\mathcal{F}}_{\infty})$   defined by
	\begin{align*}
		\widetilde{\Omega} := \big\{ (t_l,z_l)_{l \in \mathbb{N}_0}  :  t_0=0,\,& t_l < t_{l+1} \textrm{ for } t_l< \infty, t_l=t_{l+1} \textrm{ for }t_l= \infty, \\
		& z_l \neq z_{l+1}  \textrm{ for } z_l \in \mathcal{Z} ,z_l =z_{l+1}  \textrm{ for } z_l =\nabla \big\}
	\end{align*}
	and  
	$$\widetilde{\mathcal{F}}_{\infty}:= \sigma (\tau_l, \zeta_l: l \in \mathbb{N}_0)$$ for 
	$\tau_l, \zeta_l $ defined similarly to before but on the extended domain  $\widetilde{\Omega}$. Later on we will show that the difference $\widetilde{\Omega}  \setminus \Omega$ has probability zero, which will bring us back to the original measurable space. 
For $ s \in [0,\infty)$ and $i \in \mathcal{Z}$,  the random variable
\begin{align}\label{DefRho}
	\rho^i_s(\omega):= \sup \bigg\{ u \geq s : \sup_{t \in [s,u]}  \Lambda^{i\largecdot}(t) (\omega_s^i)< \infty \bigg\},
\end{align}
gives the first reset point of  $\Lambda^{i\largecdot}(\cdot)(\omega_s^i)$ on $[s,\infty)$. 	
	Since the paths of $\Lambda_{ij}(\cdot )(\omega_s^i)$ are non-decreasing on $[s,\rho_i(\omega))$, they can have at most countably many jumps on $[s,\rho_i(\omega))$. 
Let 
$$\Lambda_c^{ij}(\d t)(\omega_s^i):= \Lambda_{ij}(\d t)(\omega_s^i) - \Delta \Lambda^{ij}( t)(\omega_s^i)$$
denote the continuous part of $\Lambda^{ij}( \cdot)(\omega_s^i)$ on $[s,\rho_i(\omega))$. In the following, let  
$$\omega =(t_l,z_l)_{l \in \mathbb{N}_0}\in \widetilde{\Omega}$$ be arbitrary but fixed.
For  $i,j \in \mathcal{Z}$, $ i \neq j$  we define mappings 
$p_s^i$ and $p_s^{ij}$ on  $ [s, \infty)$ by
\begin{align}\label{pnipnij} \begin{split}
			p^i_s(t)(\omega) &:=  e^{ -( \Lambda_c^{i\largecdot}(t \wedge \rho_s^i(\omega))(\omega_{s}^i)-\Lambda_c^{i\largecdot}(s)(\omega_{s}^i)) } \prod_{s < u \leq t \wedge \rho_s^i(\omega)} \Big(1-  \Delta\Lambda^{i\largecdot}(u)(\omega_{s}^i)\Big), \\
			p_s^{ij}(t)(\omega) &:= \int_{(s,t ] \cap (0,\rho_s^i(\omega))} p_s^i(u-)(\omega)\,\Lambda^{ij}(\d u)(\omega_{s}^i).
\end{split} \end{align}
The latter definitions imply that 
\begin{align}\label{Relationpipij}\begin{split}
		\sum_{j:j \neq i} p_s^{ij}(t) &= \int_{(s,t]\cap (0,\rho_s^i(\omega))} p_s^i(u-)(\omega) \big( - \Lambda^{i\largecdot}(\d u)(\omega_{s}^i) \big)\\
		&=-\int_{(s,t]\cap (0,\rho_s^i(\omega))} p_s^i (\d u)(\omega)\\
		&=\begin{cases}1- p_s^i(t) &:\, t \in [s,  \rho_s^i(\omega)), \\ 0 &: t \in [\rho_s^i(\omega),\infty).\end{cases}
\end{split}\end{align}
 We extend the domains  of $p_s^i(\cdot)(\omega)$ and $p^{ij}_s(\cdot)(\omega)$  from $[s,\infty)$ to $[s,\infty]$ by setting
\begin{align*} 
		p_s^i(\infty)&:=p_s^i(\infty-),\\
		p_s^{ij}(\infty)&:=p_s^{ij}(\infty-), \quad i,j \in \mathcal{Z}, \, i \neq j.
\end{align*}
Furthermore, we define
$$ p_s^{i \nabla}(t):= \mathds{1}_{\{\infty\}}(t), \quad  t \in [s,\infty].$$
With these extensions, the  mapping   $(t,j) \mapsto p_s^{ij}(t)$ defines a conditional probability distribution on $[s, \infty] \times  (\mathcal{Z} \cup \{\nabla\})$ for each $(s,i)$. For $(s,i)=(t_n,z_n)$, we interpret this conditional probability distribution as the probability kernel
\begin{align}\label{Ptaun+1Fs-Zsi}
p_{\tau_n}^{\zeta_n j}(t)(\omega)= \mathbb{P}[\tau_{n+1} \leq t, \zeta_{n+1}=j|  (\tau_l, \zeta_l)_{l \leq n}](\omega), \quad t \geq \tau_n(\omega).
\end{align}
However, we still have to show that the mapping $(s,i,\omega) \mapsto p_s^{ij}(t)(\omega)$ is measurable for each $(t,j)$. 
 The mapping $(s,i,\omega) \mapsto  (s,i,\omega_s^i)$ is measurable as a mapping from the measurable space $([0,\infty) \times \mathcal{Z}\times \Omega, \mathcal{B}([0,\infty))\otimes 2^{\mathcal{Z}}  \otimes \mathcal{F}_{\infty})$ to the same  space, since it is  composed of simple functions and countably many case differentiations, see \eqref{omegastopped}. As the transition rates are right-continuous by definition, they are jointly measurable, so  the composition $(s,i,\omega) \mapsto \Lambda^{ij}(s)(\omega_s^i)$ is measurable too. Likewise, one can argue that   $( s,i,\omega,t) \mapsto \Lambda^{ij}(t)(\omega_s^i)$ is measurable, and the arguments still apply for the continuous part $\Lambda^{ij}_c$ and the pure jump part $\Lambda^{ij}-\Lambda^{ij}_c$ of $\Lambda^{ij}$. From all these measurable mappings,  the mapping $(s,i, \omega,t) \mapsto p^i_s(t)(\omega)$ and then $(s,i,\omega,t) \mapsto p^{ij}_s(t)(\omega)$ are formed by using simple operations and limits, see \eqref{pnipnij}, so they are measurable too. That means that \eqref{Ptaun+1Fs-Zsi} indeed describes probability kernels.

By applying the Ionescu-Tulcea theorem, confer with Proposition~V.1.1 in~\cite{Neveu1965}, from the distribution function  $(t_0,z_0) \mapsto \mathds{1}_{\{0\}}(t_0)  \alpha(z_0)  $ for $ (\tau_{0}, \zeta_{0})$ and the   probability kernels  \eqref{Ptaun+1Fs-Zsi} we construct a probability measure $\mathbb{P}$ on  $( \widetilde{\Omega},  \widetilde{\mathcal{F}}_{\infty})$ as the unique completion of 
\begin{align} \label{DefOfP} \begin{split}
		&\mathbb{P} [ A] := \sum_{i_0, \ldots, i_m}\int_{(s_0,\infty]} \cdots \int_{(s_{l-1},\infty]} \mathds{1}_A(\omega_{s_0\cdots s_{l}}^{i_0 \cdots i_{l}}) \, p_{s_{l-1}}^{i_{l-1}i_l}(\d s_{l})(\omega_{s_0\cdots s_{l-1}}^{i_0 \cdots i_{l-1}}) \cdots  p_{s_0}^{i_0i_1}(\d s_{1})(\omega_{s_0}^{i_0})\,  \alpha(i_0),
\end{split}\end{align}
for $s_0:=0$, $A\in \mathcal{G}_l$, $l \in \mathbb{N}$,  and for  $\omega_{s_0\cdots s_l}^{i_0 \cdots i_l}$ defined by the iterative formula 
$$\omega_{s_0\cdots s_l}^{i_0 \cdots i_l} := (\omega_{s_0 \cdots s_{l-1}}^{i_0 \cdots i_{l-1}})_{s_l}^{i_l}, \quad l =1, \ldots, m.$$
Equation \eqref{DefOfP}  implies property (iii),  but  for the extended probability space  $( \widetilde{\Omega},  \widetilde{\mathcal{F}}_{\infty})$.
Analogously, we define probability kernels $\mathbb{P}_{s}^i$, $i\in\mathcal{Z}$, $s \in [0,\infty)$, on $( \widetilde{\Omega},  \widetilde{\mathcal{F}}_{\infty})$ as the unique completions of
 \begin{align}\label{DefOfcP}  \begin{split}
	\mathbb{P}_{s_n}^{i_n}[  A](\omega) :=    \sum_{i_{n+1}, \ldots, i_l}  \int_{(s_n,\infty]} \cdots  \int_{(s_{l-1},\infty]} \mathds{1}_A(\omega_{s_n\cdots s_{l}}^{i_n \cdots i_{l}}) \, p_{s_{l-1}}^{i_{l-1}i_l}(\d s_{l})(\omega_{s_n\cdots s_{l-1}}^{i_n \cdots i_{l-1}})  \cdots p_{s_n}^{i_ni_{n+1}}(\d s_{n+1})(\omega_{s_n}^{i_n})
\end{split}\end{align}
for $t_n \leq s_n < t_{n+1}$, $A\in \mathcal{G}_l$, $n,l \in \mathbb{N}_0$, $n<l$.
For $n< k \leq l$, the term $\mathbb{P}_{s_{k}}^{i_{k}}[A](\omega_{s_n\cdots s_{k-1}}^{i_n \cdots i_{k-1}}) $ equals the $l-k$ inner integrals of  $\mathbb{P}_{s_n}^{i_n}[A](\omega)$, so it holds that
\begin{align}\label{ProjectionProperty2}
	\mathbb{P}_{s_n}^{i_n}[ A ]= \mathbb{E}_{s_n}^{i_n}\big[   \mathbb{P}_{\tau_{k}}^{\zeta_{k}}[A] \big], \quad k> n,\,  \tau_n \leq s_n < \tau_{n+1}.
\end{align}
Moreover, by using the fact that  definition \eqref{pnipnij} implies that 
\begin{align*}
   p_{s_n}^{i_ni_{n+1}}(\d s_{n+1})(\omega_{s_n}^{i_n}) = 	 p_{s_n}^{i_n}(s)(\omega_{s_n}^{i_n}) \, p_{s}^{i_ni_{n+1}}(\d s_{n+1})(\omega_{s_n}^{i_n}),\quad s_n \leq s < s_{n+1},
\end{align*}
from definition \eqref{DefOfcP}, equation \eqref{Relationpipij}, and the property  $(\omega_{s_n}^i)_{s}^i= \omega_{s_n}^i$ for $s \geq s_n $, we can conclude that
\begin{align*}
	&\mathbb{P}_{s_{n}}^{i_n} [A   \cap \{\tau_n \leq s < \tau_{n+1}\}](\omega)\\ &=  \mathbb{P}_{s}^{i_n} [A\cap \{\tau_n \leq s < \tau_{n+1}\}](\omega_{s_n}^{i_n}) \,p_{s_n}^{i_n}(s)(\omega_{s_n}^{i_n})\\
	& = \sum_{i_{n+1}}\int_{(s,\infty]}  \mathbb{P}_{s}^{Z(s)(\omega_{s_n}^{i_n})} [A\cap  \{\tau_n \leq s < \tau_{n+1}\}](\omega_{s_n}^{i_n})\, p_{s_n}^{i_ni_{n+1}}(\d s_{n+1})(\omega_{s_n}^{i_n})\\
	&=   \mathbb{E}_{s_n}^{i_n}\big[ \mathbb{P}_{s}^{Z(s)} [A\cap  \{\tau_n \leq s < \tau_{n+1}\}] ](\omega), \quad t_n \leq s_n < t_{n+1}, s \geq s_n.
\end{align*}
This fact and equation \eqref{ProjectionProperty2} yield
\begin{align}\label{ProjectionProperty3}\begin{split}
	\mathbb{P}_{u}^{i} [A ]
	&= \sum_{n=0}^{\infty}\mathds{1}_{\{\tau_k \leq u < \tau_{k+1}\}}\sum_{k=n}^{\infty} \mathbb{P}_{u}^{i} [ A\cap \{\tau_k \leq s < \tau_{k+1}\}]\\
	& = \sum_{n=0}^{\infty}\mathds{1}_{\{\tau_k \leq u < \tau_{k+1}\}}\sum_{k=n}^{\infty} \mathbb{E}_{u}^{i} [ \mathbb{E}_{\tau_k}^{\zeta_k} [ \mathbb{P}_s^{Z(s)}[ A\cap \{\tau_k \leq s < \tau_{k+1}\}]]]\\
	& = \mathbb{E}_{u}^{i} [  \mathbb{P}_s^{Z(s)}[ A]], \quad   s \geq u.
\end{split}\end{align}
Furthermore, since $I^j(s)((\cdot)_s^{i} ) =\mathds{1}_{j =i} $ and  $ \mathds{1}_C((\cdot)^i_{s})=\mathds{1}_C(\cdot)$, 
definition \eqref{DefOfcP}  implies that  
\begin{align}\label{ExactnessOfCondExp}
	\mathbb{P}_{s}^i[ A \cap C \cap \{Z(s)=j\}] = \mathds{1}_{j =i}\, \mathds{1}_C \, \mathbb{P}_{s}^i[  A], \quad C \in \mathcal{F}_{s-}.
\end{align}
By applying \eqref{ProjectionProperty2}, \eqref{ProjectionProperty3}, and \eqref{ExactnessOfCondExp}, we can show that
\begin{align*}
	\mathbb{P} [A \cap C \cap \{Z(s)=i\}]=
	\mathbb{E}\big[  \mathds{1}_C\,I_i(s)\,\mathbb{P}_{s}^i [A] \big], \quad C \in \mathcal{F}_{s-},
\end{align*}
which is property (ii),  but for the extended probability space  $ ( \widetilde{\Omega},  \widetilde{\mathcal{F}}_{\infty})$. 
Definition \eqref{DefOfcP} implies that 
\begin{align*}
	 \mathbb{P}_{s}^i[ \tau_{n+1} \leq t , \zeta_{n+1}=j ] = p_s^{ij}(t), \quad \tau_n \leq s < \tau_{n+1}.
\end{align*}
Because of \eqref{ExactnessOfCondExp} and 
$$\{\tau_n \leq s < \tau_{n+1}\} \cap \{\tau_{n+1} \leq t, \zeta_{n+1}=j\}= \{\tau_n \leq s < \tau_{n+1}\}\cap \{\tau(s)  \leq t, Z(\tau(s))=j\},$$
we moreover have
\begin{align*}
	\mathbb{P}_{s}^i[ N^{ij}(t \wedge \tau(s) ) -N^{ij}(s)] &= p_s^{ij}(t), \quad s < t  < \rho_s^i,
\end{align*} 
and by equation \eqref{Relationpipij} we furthermore get
\begin{align*}
	\mathbb{P}_{s}^i[ \tau(s) \geq t ] &= p_s^{i}(t-), \quad s  < t < \rho_s^i.
\end{align*} 
The latter two equations, definition \eqref{pnipnij}, and the measurability assumption \eqref{NoAmbioguityTransitionRates}  yield property (i). 

In a next step, we are going to show that our construction still works on the restricted  measurable space  $(\Omega,  \mathcal{F}_{\infty}) \subset ( \widetilde{\Omega},  \widetilde{\mathcal{F}}_{\infty})$.
For  $r \geq  s$, the definitions \eqref{DefOfcP} and \eqref{pnipnij} 
imply that
\begin{align*} 
	&\mathbb{E}_{s}^{i}\bigg[\int_{(\tau_n, \tau_{n+1}]}  \mathds{1}_{(r,t]}  N^{jk}( \d u )\bigg](\omega)\\
	&=\mathbb{P}_{s}^{i}[ r <  \tau_{n+1} \leq t, \zeta_{n+1}=k, \zeta_n=j](\omega)\\
	& =   \int_{(s, \infty]}\mathds{1}_{(r,t]} \mathds{1}_{i=j} \, p_{s}^{ik}(\d u)( \omega_{s}^i)\\
	& =   \int_{(s, \infty]}  \mathds{1}_{(r,t]} (\omega)\mathds{1}_{i=j} \, p_{s} ^{i}(u-)(\omega) \, \Lambda^{jk}(\d u )(\omega_{s}^i)\\
	& =    \int_{(s, \infty]} \mathds{1}_{(r,t]} \,\mathbb{P}_{s}^i [Z(u-)=j, \tau_{n+1}  \geq u ](\omega)\, \Lambda^{jk}(\d u )(\omega_{s}^i), \quad t_n \leq s < t_{n+1}.
\end{align*}
On the other hand, by applying definition \eqref{DefOfcP}, using the fact that 
$$I^j(u-)(\omega )\Lambda^{jk}(\d u)(\omega) =I^j(u-)(\omega_{t_n}^i) \Lambda^{jk}(\d u)(\omega_{t_n}^i), \quad  u \leq t_{n+1},\, z_n=i,$$ and  applying Tonelli's theorem, we can show that 
\begin{align*}
	&   \E_{s}^i\bigg[   \int_{(\tau_n  , \tau_{n+1}]}  \mathds{1}_{( r,t]}  I^j(u-) \,  \Lambda^{jk}(\d u )\bigg](\omega)\\
	&=  \int_{(s, \infty]}   \bigg(\int_{(s , s_{n+1}]}   \mathds{1}_{(r,t]} (\omega_{s}^i) I^j(u-)(\omega_{s}^i)\,  \Lambda^{jk}(\d u )(\omega_{{s}}^{i}) \bigg)\mathbb{P}_{s}^i[\tau_{n+1} \in \d s_{n+1}](\omega)\\
	&=   \int_{ (s, \infty]}  \mathds{1}_{(r,t]} (\omega) \int_{[u, \infty]}  I^j(u-)(\omega_{s}^i) \,\mathbb{P}_{s}^i[\tau_{n+1} \in \d s_{n+1}](\omega) \,  \Lambda^{jk}(\d u )(\omega_{{s}}^{i}) \\	
	& =    \int_{ (s,\infty]}  \mathds{1}_{(r,t]}  \,\mathbb{P}_{{s}}^i [Z(u-)=j,  \tau_{n+1} \geq u ](\omega)\, \Lambda^{jk}(\d u )(\omega_{{s}}^i), \quad t_n \leq s < t_{n+1},
\end{align*}
so that we can conclude that 
\begin{align*}
	& \mathbb{E}_{s}^i\bigg[ \int_{(r,t]} \mathds{1}_{\{\tau_n < u \leq \tau_{n+1}\}} N^{jk}(\d u) \bigg]=  \mathbb{E}_{s}^i\bigg[ \int_{(r,t]} \mathds{1}_{\{\tau_n < u \leq \tau_{n+1}\}} I^j(u-) \,  \Lambda^{jk}(\d u ) \bigg], \quad \tau_n \leq s < \tau_{n+1}, \, r \geq s.
\end{align*}
By setting $(s,i)=(\tau_n,\zeta_n)$, using equation \eqref{ExactnessOfCondExp} for pulling the factor $\mathds{1}_{r \geq \tau_n }$ inside the conditional expectations, and  applying equation \eqref{ProjectionProperty3} together with Tonelli's theorem, we get
\begin{align*}
	&  \mathbb{E}_{v}^l\bigg[ \int_{(r\vee v,t]} \mathds{1}_{\{\tau_n < u \leq \tau_{n+1}\}} N^{jk}(\d u)  \bigg]=   \mathbb{E}_{v}^l\bigg[ \int_{(r\vee v,t]} \mathds{1}_{\{\tau_n < u \leq \tau_{n+1}\}}  I^j(u-) \,  \Lambda^{jk}(\d u ) \bigg].
\end{align*}
By applying  the latter two equations and using equation \eqref{ExactnessOfCondExp} for pulling the factor $\mathds{1}_{ \tau_n \geq v }$  out of the conditional expectations, for any $(v,l) \in [0,\infty) \times \mathcal{Z}$ with $v \leq r$ we can conclude that
\begin{align} \label{CompensatorEquation}\begin{split}
	\mathbb{E}^l_v\Big[ N^{jk}(t \vee v )- N^{jk}(r \vee v)\Big] & = \sum_{n \in \mathbb{N}_0} 	\mathbb{E}^l_v\bigg[ \int_{(r\vee v,t]} \mathds{1}_{\{\tau_n < u \leq \tau_{n+1}\}} N^{jk}(\d u)  \bigg]\\
	&=  \mathbb{E}_v^l\bigg[  \int_{(r\vee v,t\vee v]} I_j(u-)  \Lambda^{jk}(\d u )\bigg].
\end{split}\end{align}		
Let $J \subset \{ (j,k) \in \mathcal{Z}^2: j\neq k\}$ be the set of transitions for which the corresponding transition rates are bounded on finite intervals. We assumed that for any recurrent sequence of transitions $i_0 \neq i_1 \neq \cdots \neq i_{n}$,  at least one of  the transitions $(i_{l-1},i_{l})$ is from the set  $J$. If the  sequence has a  length greater than $|\mathcal{Z}|$, then it necessarily contains a recurrent sub-sequence of maximum length $|\mathcal{Z}|$, and this sub-sequences  contains at least one transition from $J$ by our assumption. By removing the sub-sequence and iterating our arguments, we can conclude that a sequence of length $n$ contains at least $ [ n/ | \mathcal{Z}| ] +  | \mathcal{Z}|-1$ transitions from $J$. So we have
\begin{align}\label{UpperEstimateForJumps}
	\sum_{j,k: j\neq k} N^{jk}(t)& \leq  | \mathcal{Z}|- 1 + | \mathcal{Z}| \, \sum_{(j,k) \in J}N^{jk}(t)
\end{align}
for each $ t \in [0,\infty)$. 
This fact and \eqref{CompensatorEquation}  yield
\begin{align}\label{BoundForN}\begin{split}
	\mathbb{E}\bigg[ \sum_{j,k: j\neq k} N^{jk}(t)\bigg]  & = \sum_i \alpha(i) \, \mathbb{E}_0^i \bigg[ \sum_{j,k: j\neq k} N^{jk}(t)\bigg]\\
	  &\leq  \sum_i \alpha(i) \, \bigg( |\mathcal{Z}|-1+ | \mathcal{Z}|\,	\mathbb{E}\bigg[ \sum_{(j,k) \in J}\big(\Lambda^{jk}(t)-\Lambda^{jk}(0)\big)\Big] \bigg),
\end{split}\end{align}
which is finite. Therefore,  the event
$	\{ \sum_{i,j: i\neq j} N^{ij}(t)= \infty\} $ must have a probability of zero, so that
\begin{align*} 
	\mathbb{P}[\Omega]= \mathbb{P}[\lim_{n \rightarrow  \infty}\tau_n = \infty ]=1.
\end{align*}
Similarly, using \eqref{UpperEstimateForJumps} and \eqref{CompensatorEquation} one can also show that 
$\mathbb{P}_s^i[\Omega](\omega)=1$ for all $\omega \in \Omega$.  That means that $\mathbb{P}$ is a probability measure and $\mathbb{P}_s^i$, $s \geq 0$, $i \in \mathcal{Z}$, are probability kernels  also on  the restricted measurable space 
$$(\Omega, \widetilde{\mathcal{F}}_{\infty} \cap \Omega)= (\Omega, \mathcal{F}_{\infty}).$$  The properties (i) to (iii) still hold on this restricted space.

We now show uniqueness of $\mathbb{P}$ and $\mathbb{P}_{s}^i$, $s \in [0,\infty)$, $ i \in \mathcal{Z}$.  Suppose that $\widetilde{\mathbb{P}}$ and $\widetilde{\mathbb{P}}_{s}^i$, $s \in [0,\infty)$, $ i \in \mathcal{Z}$, also satisfy the properties (i) to  (iii). 
Property (i)  implies that
\begin{align*}
\widetilde{\mathbb{P}}_{s}^i[  \tau(s) > t ](\omega) & = \sum_{j:j \neq i} \int_{(t , \infty] } \widetilde{\mathbb{E}}_{s}^i[ \mathds{1}_{\{\tau(s)\geq u\}} N^{ij}( \d u ) ](\omega) \\
& = \sum_{j:j \neq i} \int_{(t , \infty] } \widetilde{\mathbb{P}}_{s}^i[  \tau(s) \geq u ](\omega) \, \Lambda^{ij}(\d u)(\omega_{s}^i) , \quad s < t <  \rho_s^i(\omega). 
\end{align*}
Since $\widetilde{\mathbb{P}}_{s}^i[  \tau(s) > s ]= \widetilde{\mathbb{P}}_{s}^i[  \Omega ](\omega)=1$, see the definition of $\tau(s)$, we have that the latter equation is equivalent to 
\begin{align*}
	 \widetilde{\mathbb{P}}_{s}^i[  \tau(s) > t ](\omega)
	& = 1- \sum_{j:j \neq i} \int_{(s,t] }  \widetilde{\mathbb{P}}_{s}^i[  \tau(s) \geq u ](\omega)  \, \Lambda^{ij}(\d u)(\omega_{s}^i) , \quad s < t <  \rho_s^i(\omega). 
\end{align*}
So, for each fixed $i \in \mathcal{Z}$, $s \in [0,\infty)$ and $\omega \in \Omega$, the function  $f(t) =  \widetilde{\mathbb{P}}_{s}^i[  \tau(s) > t ](\omega)$ solves   the Volterra integral equation
\begin{align*}
	f(t) 
	& = 1-  \int_{(s,t] } f(u-) \, \Lambda^{i\largecdot}(\d u)(\omega_{s}^i) , \quad s < t <  \rho_s^i(\omega), 
\end{align*}
which has  the product integral 
$$ f(t)=  \prod_s^t \Big(1+ \Lambda^{i\largecdot}(\d u)(\omega_{s}^i)\Big)=e^{ -( \Lambda_c^{i\largecdot}(t)(\omega_{s}^i)-\Lambda_c^{i\largecdot}(s)(\omega_{s}^i)) } \prod_{s < u \leq t} \Big(1-  \Delta\Lambda^{i\largecdot}(u)(\omega_{s}^i)\Big) , \quad s < t <  \rho_s^i(\omega), $$
as its unique solution.
That means that property (i) defines $\widetilde{\mathbb{P}}_{s}^i[  \tau(s) > t ](\omega)$  uniquely on $(s,\rho_s^i(\omega))$.  Moreover, property (i) also implies that
\begin{align*}
 \widetilde{\mathbb{P}}_{s}^i[  N^{ij}(t \wedge \tau(s)) - N^{ij}(s) ](\omega) 
& = \int_{(t , \infty] } \widetilde{\mathbb{P}}_{s}^i[  \tau(s) \geq u ](\omega) \, \Lambda^{ij}(\d u)(\omega_{s}^i) , \quad  s < t <  \rho_s^i(\omega),
\end{align*}
for $j \in\mathcal{Z}$, $j \neq i$, so that 
 also the mappings $ \widetilde{\mathbb{P}}_{s}^i[  N^{ij}(t \wedge \tau(s)) - N^{ij}(s) ](\omega)  $, $j \in \mathcal{Z}$, are unique on $(s,\rho_s^i(\omega))$. That means that the probability kernels \eqref{Ptaun+1Fs-Zsi} are uniquely characterized by  property (i), and by the Ionescu-Tulcea theorem we get that   $\widetilde{\mathbb{P}}_{s}^i=\mathbb{P}_{s}^i$, $s \in [0,\infty)$, $ i \in \mathcal{Z}$. In particular, we have  $\widetilde{\mathbb{P}}_{0}^i=\mathbb{P}_{0}^i$, so by properties (ii) and (iii) we finally get 
 $  \widetilde{\mathbb{P}}=\mathbb{P}$.
\end{proof}
The following result confirms the role of $C^j$ from~\eqref{DefinitionCompensator} as the compensator of $N^j$ defined in~\eqref{DefinitionTotalCounts}.
\begin{proposition}\label{PropMartingaleProperty}
	Let $Y$ be a jointly measurable and $\mathcal{F}^-$-adapted process such that 
	\begin{align*}
		\mathbb{E}_{s}^i\bigg[\int_{(s,t]} |Y(u)|  I^j(u-) \Lambda^{jk} (\d u)\bigg] < \infty.
	\end{align*}
	Then we have
	\begin{align*}
		\mathbb{E}_{s}^i\bigg[\int_{(s,t]} Y(u) N^{jk} (\d u)\bigg]=\mathbb{E}^i_{s}\bigg[ \int_{(s,t ]} Y(u) I^j(u-) \Lambda^{jk} (\d u)\bigg]
	\end{align*}
	for $0 \leq s < t < \infty$ and $ i,j,k \in \mathcal{Z}$ with $j \neq k$. 
\end{proposition}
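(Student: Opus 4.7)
The plan is to extend the compensator identity from the base case $Y\equiv 1$, already established as equation~\eqref{CompensatorEquation} in the proof of Theorem~\ref{TheoremCanoncialSpace}, to arbitrary jointly measurable $\mathcal{F}^-$-adapted $Y$ via the standard functional monotone class argument. The input facts are: the tower-type identity~\eqref{ProjectionProperty3}, the exactness property~\eqref{ExactnessOfCondExp} that allows $\mathcal{F}_{r-}$-measurable factors to be pulled out of $\mathbb{E}_r^{Z(r)}[\,\cdot\,]$, and \eqref{CompensatorEquation} itself.

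First, I would verify the proposition for elementary left-continuous processes of the form
\begin{align*}
Y(u)(\omega) = \mathds{1}_C(\omega)\,\mathds{1}_{(r,t']}(u), \qquad s\leq r\leq t'\leq t,\ C\in\mathcal{F}_{r-}.
\end{align*}
For such $Y$, \eqref{ProjectionProperty3} gives $\mathbb{E}_s^i[\,\cdot\,]=\mathbb{E}_s^i[\mathbb{E}_r^{Z(r)}[\,\cdot\,]]$, while \eqref{ExactnessOfCondExp} allows the factor $\mathds{1}_C$ to be pulled out of the inner kernel. Applying~\eqref{CompensatorEquation} to the remaining inner expectation on both sides then yields the claim for these elementary $Y$. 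By linearity the identity extends to the algebra $\mathcal{A}$ of finite linear combinations of such elementary processes.

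Next, I would apply the functional monotone class theorem to the vector space $\mathcal{H}$ of bounded jointly measurable $\mathcal{F}^-$-adapted $Y$ for which the identity holds. Since $\mathcal{H}$ contains $\mathcal{A}$, is closed under bounded monotone convergence (by dominated convergence on both sides), and $\mathcal{A}$ generates the predictable $\sigma$-algebra associated with the filtration $\mathcal{F}^-$, it follows that $\mathcal{H}$ contains every bounded jointly measurable $\mathcal{F}^-$-adapted $Y$. Finally, for $Y$ satisfying only the stated integrability condition, I would split $Y=Y^+-Y^-$, truncate at level $n$ and let $n\to\infty$, using monotone convergence on the right and the integrability hypothesis together with $|Y|$ as a majorant to justify the passage to the limit on the left via $N^{jk}(\d u)\leq I^j(u-)N^{jk}(\d u)$ combined with~\eqref{CompensatorEquation} applied to $|Y|\wedge n$.

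The main obstacle is the identification step: making precise that every jointly measurable $\mathcal{F}^-$-adapted process lies in the $\sigma$-algebra generated by the elementary left-continuous processes above. This is handled by noting that $\mathcal{F}^-$ is left-continuous, so $\mathcal{F}^-$-adapted left-continuous processes generate the predictable $\sigma$-algebra, and an arbitrary $\mathcal{F}^-$-adapted jointly measurable process coincides (up to the inessential distinction between optional and predictable for left-continuous filtrations of marked point processes) with a predictable one on the relevant integrators, since $N^{jk}$ and $I^j(\cdot-)\,\Lambda^{jk}(\d u)$ do not charge the graphs of predictable stopping times outside those already captured by $\mathcal{F}_{r-}$-measurable conditioning. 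Everything else reduces to routine monotone class and monotone convergence arguments.
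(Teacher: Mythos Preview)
Your monotone class route differs from the paper's, which does not build up from elementary integrands at all. For the bounded truncations $Y_n=(Y\wedge n)\vee(-n)$, the paper simply reruns the explicit computation leading to~\eqref{CompensatorEquation} with $Y_n(u)$ inserted into every integrand and Fubini replacing Tonelli. The single new ingredient is the pathwise identity $Y(u)(\omega)=Y(u)(\omega_s^i)$ for $s<u\le\tau(s)(\omega)$ when $Z(s)(\omega)=i$, which is exactly what $\mathcal{F}^-$-adaptedness delivers via~\eqref{GnFtEquivalence}; it lets $Y_n(u)$ be carried through the iterated integrals of~\eqref{DefOfcP} as a fixed function of $u$, just as $\Lambda^{jk}(\cdot)(\omega_s^i)$ was. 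The limit $n\to\infty$ then proceeds as in your last paragraph.

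Your outline is sound up to the identification step you flag, but the justification you give for it is incorrect. In the discrete regime of Proposition~\ref{PropDiscreteModel}, $N^{jk}$ charges precisely the predictable times $n\in\mathbb{N}$, so the claim about integrators not charging graphs of predictable stopping times fails outright. What is actually required---that every jointly measurable $\mathcal{F}^-$-adapted process is $\mathcal{F}$-predictable---does hold for marked point process filtrations, and the clean way to see it is that on $\{\tau_n<t\le\tau_{n+1}\}$ one has $Y(t)(\omega)=Y(t)(\omega_{\tau_n(\omega)}^{\zeta_n(\omega)})$, which is jointly $\mathcal{B}\otimes\mathcal{G}_n$-measurable by composition. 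But this is essentially the same path-stopping observation the paper exploits directly, so the monotone class detour buys nothing here and still leaves you with the step that needed care.
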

\begin{proof}At first, we show the assertion for each of the bounded processes $Y_n(t):= (Y(t) \wedge n) \vee (-n)$, $n \in \mathbb{N}$. For any  $Y_n$, the proof is similar to the proof of Equation~\eqref{CompensatorEquation}, but instead of  Tonelli's theorem we apply Fubini's theorem and additionally exploit the fact that, whenever $Z(s) = i$, it holds that $Y(u)(\omega)= Y(u)(\omega_s^i)$ for $ s < u \leq \tau(s)(\omega)$.  Based on the  integrability condition, for $n$ going to infinity  we obtain the assertion also for the limit $Y= \lim_{n \rightarrow \infty} Y_n$. 
\end{proof}
\begin{definition}
Given the canonical measurable space $(\Omega,\mathcal{F}_\infty)$, the probability mesaure $\mathbb{P}$ on $(\Omega,\mathcal{F}_\infty)$, the probability kernels $\mathbb{P}_s^i$, $s\in[0,\infty)$, $i\in\mathcal{Z}$, on $(\Omega,\mathcal{F}_\infty)$, and the collection
\begin{align*}
\mathcal{P} := \big((\mathbb{P}_s^i)_{s \in [0,\infty),i\in\mathcal{Z}},\mathbb{P}\big)
\end{align*}
defined by Theorem~\ref{TheoremCanoncialSpace} are called the \emph{canonical probability measure}, \emph{canonical probability kernels}, and \emph{canonical probability model} generated by $(\alpha, \Lambda)$, respectively.
\end{definition}
From now on we generally work with the canonical probability model, since it provides the conditional expectations $\mathbb{E}_s^i[\cdot ]$, $s \in [0,\infty)$, $i \in \mathcal{Z}$, which have all the usual properties of conditional expectations not only almost surely, but everywhere on $\Omega$. 
\begin{proposition} \label{Proposition:CanoncialSpaceProperties}
	For  $s \in [0,\infty)$ and $i \in \mathcal{Z}$, let $Y$ be  a $\mathbb{P}_s^i$-integrable random variable. Then
	\begin{itemize}
		\item[(i)] For  $C \in \mathcal{F}_{s-}$ and $j \in \mathcal{Z}$  it holds that
		\begin{align*}
			\mathbb{E}_{s}^i[  \mathds{1}_C \, Y  ] &=  \mathds{1}_{C}\,  \mathbb{E}_{s}^i[ Y],\\
			\mathbb{E}_{s}^i[ I^j(s) \,  Y  ] &= \mathds{1}_{j=i} \,  \mathbb{E}_{s}^i[ Y]{,}
		\end{align*}
		\item[(ii)] {f}or $t \in [s,\infty)$ and $n \in \mathbb{N}_0$ it holds that
		\begin{align*}
			\E_{s}^i[Y] &= \E_{s}^i[\E_{t}^{Z(t)}[Y]],\\
			\mathds{1}_{\{\tau_n > s\}}	\, \E_{s}^i[Y] &= \mathds{1}_{\{\tau_n > s\}}	\,  \E_{s}^i[\E_{\tau_n}^{\zeta_n}[Y]].
		\end{align*}
	\end{itemize}
\end{proposition}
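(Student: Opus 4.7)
My plan is to derive all four assertions from the three identities~\eqref{ExactnessOfCondExp}, \eqref{ProjectionProperty2}, and~\eqref{ProjectionProperty3} established during the construction of $\mathbb{P}$ and $\mathbb{P}_s^i$, combined with the standard extension from indicators of measurable sets to integrable random variables by linearity and monotone convergence on $Y^\pm$. A useful preliminary consequence of~\eqref{ExactnessOfCondExp} (specialise to $A = C = \Omega$) is that $\mathbb{P}_s^i[Z(s) = i] = 1$ and $\mathbb{P}_s^i[Z(s) = j] = 0$ for $j \neq i$, so that the kernel $\mathbb{P}_s^i$ is concentrated on $\{Z(s) = i\}$.

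For part~(i), the second identity is then immediate, since under $\mathbb{P}_s^i$ one has $I^j(s) = \mathds{1}_{\{j = i\}}$ almost surely. For the first identity, the preceding observation combined with~\eqref{ExactnessOfCondExp} (taking $j = i$) gives $\mathbb{P}_s^i[A \cap C] = \mathds{1}_C\,\mathbb{P}_s^i[A]$ for all $A \in \mathcal{F}_\infty$ and $C \in \mathcal{F}_{s-}$, which settles $Y = \mathds{1}_A$; the extension to arbitrary integrable $Y$ then finishes the job.

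The first identity of part~(ii) is precisely~\eqref{ProjectionProperty3} at the level of indicators, and the same extension argument delivers the general case. The second identity requires a pointwise argument because $\tau_n$ is random. I would fix $\omega$ with $\tau_n(\omega) > s$ and let $m = m(\omega)$ denote the unique index with $\tau_m(\omega) \leq s < \tau_{m+1}(\omega)$; this is well-defined since $\tau_0 = 0$ and $(\tau_k)$ is strictly increasing while finite on $\Omega$. Because $\tau_m(\omega) \leq s < \tau_n(\omega)$ forces $n > m$, equation~\eqref{ProjectionProperty2}, applied at this $\omega$ with its inner index taken as $m$ and its $k$ taken as our $n$, yields $\mathbb{P}_s^i[A](\omega) = \mathbb{E}_s^i[\mathbb{P}_{\tau_n}^{\zeta_n}[A]](\omega)$. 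Multiplying by $\mathds{1}_{\{\tau_n > s\}}$ and extending from indicators as before produces the stated identity. I do not anticipate any serious obstacle; the only subtlety is tracking the $\omega$-dependent auxiliary index $m(\omega)$ in the appeal to~\eqref{ProjectionProperty2}, which is precisely why the factor $\mathds{1}_{\{\tau_n > s\}}$ appears in the statement.
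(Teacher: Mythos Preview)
Your proposal is correct and follows essentially the same approach as the paper: reduce to indicators via the identities~\eqref{ExactnessOfCondExp}, \eqref{ProjectionProperty2}, and~\eqref{ProjectionProperty3} established in the proof of Theorem~\ref{TheoremCanoncialSpace}, and then extend to integrable $Y$ by linearity and a limiting argument (you use monotone convergence on $Y^\pm$, the paper invokes dominated convergence; both are standard). Your treatment of the second identity in~(ii), where you fix $\omega$ and identify the auxiliary index $m(\omega)$ with $\tau_m(\omega)\le s<\tau_{m+1}(\omega)$ so that~\eqref{ProjectionProperty2} applies with $k=n>m$, simply makes explicit what the paper leaves implicit.
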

Note that this proposition genuinely does not need the usual `almost sure' constraints.
\begin{proof}
	For $X= \mathds{1}_A$, $A \in \mathcal{F}_{\infty}$, the properties (i) and (ii) have already been shown in the proof of Theorem~\ref{TheoremCanoncialSpace}, see Equations~\eqref{ExactnessOfCondExp},~\eqref{ProjectionProperty3}, and~\eqref{ProjectionProperty2}. From the building blocks $X= \mathds{1}_A$, $A \in \mathcal{F}_{\infty}$, we can construct general random variables as limits of linear combinations of these building blocks. The linearity of the expectation operator $\E_s^i[\cdot]$ and the dominated convergence theorem  then give the assertions.
\end{proof}
\begin{proposition}\label{prop:Markov_LambdaDeterministic}\leavevmode
\begin{itemize}
\item[(i)] If $\Lambda$ is deterministic, then for each  $s \in [0,\infty)$ and $i\in\mathcal{Z}$ the canonical probability kernel $\mathbb{P}_{s}^i$ restricted to $\sigma( Z(t): t\geq s)$  is  deterministic, and $Z$ is a Markov process (with respect to $\mathbb{P}$).
\item[(ii)] If $Z$ is a Markov process (with respect to $\mathbb{P}$), then there exist deterministic transition rates $\tilde{\Lambda}$ such that the canonical probability measure $\tilde{\mathbb{P}}$ generated by $(\alpha,\tilde{\Lambda})$ satisfies $\tilde{\mathbb{P}} = \mathbb{P}$.
\end{itemize}
\end{proposition}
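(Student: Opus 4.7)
For part (i), my plan is to revisit the explicit Ionescu--Tulcea construction in the proof of Theorem~\ref{TheoremCanoncialSpace} and trace the effect of determinism. The building blocks $p_s^i$ and $p_s^{ij}$ in \eqref{pnipnij} depend on $\omega$ only through $\Lambda(\cdot)(\omega_s^i)$, so deterministic $\Lambda$ makes them deterministic functions of $t$. In formula \eqref{DefOfcP}, for any $A \in \sigma(Z(u) : u \geq s)$ the indicator $\mathds{1}_A(\omega_{s_n \cdots s_l}^{i_n \cdots i_l})$ probes only the stopped trajectory on $[s_n, \infty)$, which is pinned by the pattern $(s_k, i_k)_{k \geq n}$; hence every factor in the integrand becomes $\omega$-free and $\mathbb{P}_s^i[A]$ is a deterministic number. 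Theorem~\ref{TheoremCanoncialSpace}(ii) then gives $\mathbb{P}[A \mid \mathcal{F}_s] = \mathbb{P}_s^{Z(s)}[A]$ almost surely, a deterministic function of $Z(s)$ alone, which is the Markov property.

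For part (ii), my plan is to extract deterministic rates $\tilde{\Lambda}$ from the Markov kernel of $Z$ under $\mathbb{P}$, apply Theorem~\ref{TheoremCanoncialSpace} to $(\alpha, \tilde{\Lambda})$ to obtain $\tilde{\mathbb{P}}$, and identify $\tilde{\mathbb{P}}$ with $\mathbb{P}$. Using the Markov property, the survival and jump-target functions
\[
S_i(s, t) := \mathbb{P}[\tau(s) > t \mid Z(s) = i], \qquad F_i^j(s, t) := \mathbb{P}[\tau(s) \leq t, Z(\tau(s)) = j \mid Z(s) = i],
\]
depend only on $(s, t, i, j)$, and Chapman--Kolmogorov consistency (e.g.\ $S_i(s, t) = S_i(0, t)/S_i(0, s)$ wherever the denominator is positive) lets the classical product-integral inversion yield time-only deterministic rates $\tilde{\Lambda}^{ij}$, with reset points inserted at the zeros of $S_i$ and with $\tilde{\Lambda}^{i\largecdot} := 0$ for $\mathbb{P}$-unreachable $i$. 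Applying Theorem~\ref{TheoremCanoncialSpace} produces $\tilde{\mathbb{P}}$; part~(i) then shows $Z$ is Markov under $\tilde{\mathbb{P}}$, and the Volterra-equation uniqueness argument inside the proof of Theorem~\ref{TheoremCanoncialSpace} shows that its sojourn and jump-target laws coincide with $S_i$ and $F_i^j$. Two Markov processes with matching initial distribution and matching one-step jump kernels share all finite-dimensional distributions via Ionescu--Tulcea, giving $\tilde{\mathbb{P}} = \mathbb{P}$.

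The main obstacle I anticipate is verifying Assumption~\ref{as:tech_lambda}(d) for the extracted $\tilde{\Lambda}$. Conditions (a)--(c) fall out respectively from determinism, from the càdlàg non-increasing structure of $S_i$, and from the reset convention at its zeros. The cycle-boundedness condition (d) is more delicate because it is only a sufficient condition for non-explosion, so its necessity from non-explosion of $N$ under $\mathbb{P}$ is not automatic. I would handle this by modifying $\tilde{\Lambda}$ on the regions $\{t : \mathbb{P}[Z(t-) = i] = 0\}$, where the value of $\Lambda^{ij}(t)$ does not affect the generated measure: one may cap the offending rates on these regions without altering $\tilde{\mathbb{P}}$, and thereby restore (d) by choosing caps carefully along each cycle.
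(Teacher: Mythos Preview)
Your argument for part~(i) is essentially the paper's: both trace determinism of $\Lambda$ through the explicit construction \eqref{pnipnij}--\eqref{DefOfcP} to conclude that $\mathbb{P}_s^i[A]$ is constant in $\omega$ for future-measurable $A$, and then invoke \eqref{DefOfProbKernel} for the Markov property.

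For part~(ii) your route genuinely diverges from the paper's. You build $\tilde{\Lambda}$ \emph{ab initio} from the Markov jump kernel via survival functions and product-integral inversion, then verify that the resulting canonical measure matches $\mathbb{P}$. The paper instead proves that the \emph{given} $\Lambda$ is already $\mathbb{P}$-almost surely deterministic: it combines the Markov property with Theorem~\ref{TheoremCanoncialSpace}(i) and a partition/squeeze argument to obtain
\[
\mathds{1}_{\{\Lambda^{i\largecdot}(t-)<\infty\}}\,I^i(t-)\,\Lambda^{ij}(\d t)
=\mathds{1}_{\{\Lambda^{i\largecdot}(t-)<\infty\}}\,I^i(t-)\,\frac{\E[N^{ij}(\d t)]}{\E[I^i(t-)]}
\]
almost surely, and then sets $\tilde{\Lambda}^{ij}(\d t):=\E[N^{ij}(\d t)]/\E[I^i(t-)]$. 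Since $\tilde{\Lambda}$ coincides with the given $\Lambda$ on a set $\Omega'$ of full measure, the explicit construction formula \eqref{DefOfP} yields $\tilde{\mathbb{P}}|_{\Omega'}=\mathbb{P}|_{\Omega'}$ immediately, and Assumption~\ref{as:tech_lambda} is inherited from $\Lambda$ rather than verified from scratch. In particular, the cycle-boundedness condition~(d) that you flag as the main obstacle simply does not arise in the paper's argument: whichever $\Lambda^{i_{l}i_{l+1}}$ is uniformly bounded for the original model is, on $\Omega'$, equal to the deterministic $\tilde{\Lambda}^{i_{l}i_{l+1}}$, which is therefore bounded too. Your approach is more self-contained in that it reads the rates off the law of $Z$ alone, but the price is exactly the technical patching you describe; the paper's approach trades that for a somewhat delicate upper/lower bound limit argument along rational partitions.
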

\begin{proof}
Suppose that $\Lambda$ is deterministic. In the construction of $\mathbb{P}_s^i[A]$ in the proof of Theorem \ref{TheoremCanoncialSpace}, the value of  $ \mathbb{P}_{s}^i[A](\omega)$ depends on $\omega$ via $ \Lambda(\cdot)(\omega_s^i)$ and $\mathds{1}_A(\omega_s^i)$, see \eqref{DefOfcP}. So for $A\in \sigma( Z(s): s\geq t)$, the mapping  $ \omega \mapsto \mathbb{P}_{s}^i[A](\omega)$ is constant if $\omega \mapsto \Lambda(\cdot)(\omega)$ is constant, which means it is deterministic.   Moreover, 
the latter fact and Equation~\ref{DefOfProbKernel} imply that  $Z$ satisfies the Markov property.

Suppose that $Z$ is a Markov process. Then  for each $A \in \mathcal{F}_s$, $s \in [0,\infty)$ and $i \in \mathcal{Z}$ we almost surely have
$$  I^i(s) \mathbb{P}_{s}^i[A] = I^i(s)\mathbb{P}[A| \{Z(s)=i\}],$$
see Equation~\eqref{DefOfProbKernel}. 
By the dominated convergence theorem,  for each  $s \in [0,\infty)$ and $i \in \mathcal{Z}$, the processes  $t \mapsto \E_s^i[\mathds{1}_{\{\tau(s) \geq t\}} N^{ij}(\d t )]$ and $t \mapsto \E[\mathds{1}_{\{\tau(s) \geq t\}} N^{ij}(\d t )| \{Z(s)=i)\}]$ are right-continuous and the processes  $t \mapsto \E_s^i[\mathds{1}_{\{\tau(s) \geq t\}} ]$ and $t \mapsto \E[\mathds{1}_{\{\tau(s) \geq t\}}| \{Z(s)=i)\}]$ are left-continuous. Therefore, we can conclude that simultaneously for all $t \in [0,\infty)$, $s \in [0,\infty) \cap  \mathbb{Q}$, $i \in \mathbb{Z}$ we almost surely have
\begin{align*}   
	I^i(s)\frac{\E_s^i[\mathds{1}_{\{\tau(s) \geq t\}} N^{ij}(\d t )]}{\mathbb{E}_s^i[\mathds{1}_{\{\tau(s) \geq t\}}]}
	&=  I^i(s)\frac{\E[\mathds{1}_{\{\tau(s) \geq t\}} N^{ij}(\d t )| \{Z(s)=i\}]}{\mathbb{E}[\mathds{1}_{\{\tau(s) \geq t\}}| \{Z(s)=i\}]}\\
 & =I^i(s) \frac{\E[\mathds{1}_{\{\tau(s) \geq t\}} I^i(s )N^{ij}(\d t )]}{\mathbb{E}[\mathds{1}_{\{\tau(s) \geq t\}} I^i(s)]}\\
  & = I^i(s)\frac{\E[\mathds{1}_{\{\tau(s) \geq t\}} I^i(t- )N^{ij}(\d t )]}{\mathbb{E}[\mathds{1}_{\{\tau(s) \geq t\}} I^i(t-)]}.
\end{align*}
From this equation combined  with Theorem \ref{TheoremCanoncialSpace}(i)  we obtain
\begin{align*}
		 \mathds{1}_{\{\Lambda^{i \largecdot}(t-)< \infty\}}I^i(s) \Lambda^{ij}(\d t) =   \mathds{1}_{\{\Lambda^{i \largecdot}(t-)< \infty\}}I^i(s)  \frac{\E[\mathds{1}_{\{\tau(s) \geq t\}} N^{ij}(\d t )]}{\mathbb{E}[\mathds{1}_{\{\tau(s) \geq t\}} I^i(t-)] }, \quad s < t \leq \tau(s), 
\end{align*}
almost surely for all $s \in [0,\infty) \cap  \mathbb{Q}$, $i \in \mathbb{Z}$.
 As a consequence, for any rational partition $\mathcal{T}$ of $[0,\infty)$ we have
\begin{align*}
		&\int_{[0,u] \cap (s, \tau(s)]} \mathds{1}_{\{\Lambda^{i \largecdot}(t-)< \infty\}} I^i(s) \Lambda^{ij}(\d t)\\
		 &=  \sum_{\mathcal{T}} \int_{[0,u] \cap (s, \tau(s)] \cap (t_k,t_{k+1}]  }  \mathds{1}_{\{\Lambda^{i \largecdot}(t-)< \infty\}}  I^i(s) \frac{\E[\mathds{1}_{\{\tau(s \vee t_k) \geq t\}} N^{ij}(\d t )]}{\mathbb{E}[\mathds{1}_{\{\tau(s \vee t_k) \geq t\}} I^i(t-)] }
\end{align*}
almost surely for all $u >0$. Let $(\mathcal{T}_m)_{m \in \mathbb{N}}$ be a sequence of rational partitions of $[0,\infty)$ with vanishing maximum step length. Then the latter line has an upper bound of 
\begin{align*}
	&  \lim_{m \rightarrow \infty }\sum_{\mathcal{T}_m} \int_{[0,u] \cap (s, \tau(s)] \cap (t_k,t_{k+1}]  }  \mathds{1}_{\{\Lambda^{i \largecdot}(t-)< \infty\}}  I^i(s) \frac{\E[ N^{ij}(\d t )]}{\mathbb{E}[\mathds{1}_{\{\tau(s \vee t_k) \geq t\}} I^i(t-)] }\\
	& = \int_{[0,u] \cap (s, \tau(s)]  }  \mathds{1}_{\{\Lambda^{i \largecdot}(t-)< \infty\}}  I^i(s) \frac{\E[ N^{ij}(\d t )]}{\mathbb{E}[I^i(t-)] }
\end{align*}
since $\mathds{1}_{\{\tau(s \vee t_k) \geq t\}} N^{ij}(\d t ) \leq N^{ij}(\d t ) $ and $\{\tau(s \vee t_k) \geq t\} \uparrow \Omega$ and by monotone convergence, and  a lower bound of
\begin{align*}
  &\lim_{m \rightarrow \infty } \sum_{\mathcal{T}_m} \int_{[0,u] \cap (s, \tau(s)] \cap (t_k,t_{k+1}]  }  \mathds{1}_{\{\Lambda^{i \largecdot}(t-)< \infty\}}  I^i(s) \frac{\E[\mathds{1}_{\{\tau(s \vee t_k) \geq t\}} N^{ij}(\d t )]}{\mathbb{E}[ I^i(t-)] }\\
  & = \int_{[0,u] \cap (s, \tau(s)]  }  \mathds{1}_{\{\Lambda^{i \largecdot}(t-)< \infty\}}  I^i(s) \frac{\E[ N^{ij}(\d t )]}{\mathbb{E}[I^i(t-)] }
\end{align*}
since $ \mathds{1}_{\{\tau(s \vee t_k) \geq t\}} N^{ij}(\d t ) \uparrow  \mathds{1}_{\Omega} N^{ij}(\d t )$.
Since the upper and lower bound are equal, we can conclude that
\begin{align*}
\mathds{1}_{\{\Lambda^{i \largecdot}(t-)< \infty\}} 	I^i(t-)\Lambda^{ij}(\d t) = \mathds{1}_{\{\Lambda^{i \largecdot}(t-)< \infty\}}  I^i(t-) \frac{\E[ N^{ij}(\d t )]}{\mathbb{E}[I^i(t-)] },\quad  t>0,
\end{align*}
almost surely, and because of the assumption \eqref{NoAmbioguityTransitionRates} we even have
\begin{align*}
\mathds{1}_{\{\Lambda^{i \largecdot}(t-)< \infty\}} \Lambda^{ij}(\d t) = \mathds{1}_{\{\Lambda^{i \largecdot}(t-)< \infty\}}   \frac{\E[ N^{ij}(\d t )]}{\mathbb{E}[I^i(t-)] },\quad  t>0,
\end{align*}
almost surely. Therefore, for the deterministic transition rates $\tilde{\Lambda}$ defined by  
\begin{align*}
\tilde{\Lambda}^{ij}(\d t) = \frac{\E[ N^{ij}(\d t )]}{\mathbb{E}[I^i(t-)] },\quad  i,j \in \mathcal{Z},\, i\neq j,
\end{align*}
and $\tilde{\Lambda}^{ij}(r):=0$ for $r=0$ and all time points that are poles of this function, we have that  $\tilde{\Lambda}-\tilde{\Lambda}(0)=\Lambda -\Lambda(0)$ $\mathbb{P}$-almost surely.  So there exist an $\Omega' \in \mathcal{F}_{\infty}$ with $\mathbb{P}[\Omega']=1$ such that $\tilde{\Lambda}-\tilde{\Lambda}(0)=\Lambda -\Lambda(0)$ everywhere on $\Omega'$. According to the explicit construction of $\mathbb{P}$ in the proof of Theorem \ref{TheoremCanoncialSpace}, the  restricted measures $\mathbb{P}|_{\Omega'}$ and $\tilde{\mathbb{P}}|_{\Omega'}$ generated by $(\alpha, \Lambda)$ and   $(\alpha, \tilde{\Lambda})$ are equal, see \eqref{DefOfP}. In particular, we have $\tilde{\mathbb{P}}[\Omega']= \mathbb{P}[\Omega]=1$ so that 
\begin{align*}
\tilde{\mathbb{P}}[A] &= \tilde{\mathbb{P}}[A \cap \Omega']= \mathbb{P}[A\cap \Omega']=\mathbb{P}[A], \quad A\in \mathcal{F}_{\infty}. \qedhere
\end{align*} 
\end{proof}
\begin{proposition}[Absolutely continuous modeling]\label{PropAbsolContinModel}
Let $i,j\in\mathcal{Z}$, $j \neq i$. The following conditions are equivalent:
\begin{itemize}
	\item[(a)] $\Lambda^{ij}$ is left-differentiable everywhere and continuously differentiable in between jump times of $Z${.}
	\item[(b)] The process $\mu^{ij}$ defined by 
	\begin{align*}
		\mu^{ij} (t) :=  \lim_{s \uparrow t } \frac{\mathbb{P}_{s}^i[Z(t)=j]}{t-s}, \quad t \in (0,\infty), 
	\end{align*}
	exists, is continuous in between jump times of $Z$, and satisfies the equation
	\begin{align*}
		\mu^{ij} (t) =   \lim_{u \downarrow t } \frac{\mathbb{P}_{t}^i[Z(u)=j]}{u-t}, \quad Z(t-)=Z(t),
	\end{align*}
	assuming that this limit exists.
\end{itemize}
Under both conditions, we moreover have 
\begin{align*}
	\Lambda^{ij}(\d t ) = \mu^{ij} (t)  \d t, \quad t \in [0,\infty).
\end{align*}
\end{proposition}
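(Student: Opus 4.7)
The plan is to pin down the precise local relationship between the kernel $\mathbb{P}_s^i[Z(t)=j]$ and the increment $\Lambda^{ij}(t)-\Lambda^{ij}(s)$ as $t-s$ becomes small, and then read both implications off that relation. The key tools are the explicit identities $\mathbb{P}_s^i[\tau(s)\ge t] = p_s^i(t-)$ and $\mathbb{P}_s^i[\tau(s)\le t,\,Z(\tau(s))=j] = p_s^{ij}(t)$ together with the product-integral formula~\eqref{pnipnij}, all established in the proof of Theorem~\ref{TheoremCanoncialSpace}.

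First I would decompose $\mathbb{P}_s^i[Z(t)=j] = p_s^{ij}(t) + R(s,t)$, where $R$ collects paths with at least two jumps of $Z$ in $(s,t]$. Conditioning on the first two jumps and applying the compensator identity of Proposition~\ref{PropMartingaleProperty} bounds $|R(s,t)(\omega)|$ by a product of two $\Lambda$-increments over $(s,t]$ evaluated at $\omega_s^i$, which is $O((t-s)^2)$ once $t$ avoids the finitely many reset and upward-jump points of $\Lambda^{i\largecdot}(\cdot)(\omega_s^i)$ on the relevant finite interval (Assumption~\ref{as:tech_lambda}(b)--(c)). In that same regime $p_s^i(u-) \to 1$ uniformly for $u \in (s,t]$, so the product-integral representation gives
\begin{align*}
p_s^{ij}(t)(\omega) = \Lambda^{ij}(t)(\omega_s^i) - \Lambda^{ij}(s)(\omega_s^i) + o(t-s).
\end{align*}
The measurability condition~\eqref{NoAmbioguityTransitionRates} further ensures that, once $s$ lies past the last jump of $Z(\cdot)(\omega)$ before $t$, the paths $\Lambda^{ij}(\cdot)(\omega_s^i)$ and $\Lambda^{ij}(\cdot)(\omega)$ agree on a neighborhood of $t$. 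Combining these ingredients,
\begin{align*}
\frac{\mathbb{P}_s^i[Z(t)=j](\omega)}{t-s} = \frac{\Lambda^{ij}(t)(\omega) - \Lambda^{ij}(s)(\omega)}{t-s} + o(1) \qquad \text{as } s\uparrow t,
\end{align*}
so the two left-hand limits exist simultaneously and coincide; the analogous argument with $u\downarrow t$, restricted to $\{Z(t-)=Z(t)\}$ so that $s$ (respectively $t$) is already past the preceding jump, handles the right-hand counterpart.

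This coupling delivers the equivalence (a)$\Leftrightarrow$(b) at once: left-differentiability of $\Lambda^{ij}$ at each $t$ matches existence of $\mu^{ij}(t)$ from the left, and the right-hand version matches at non-jump times of $Z$; continuity of the derivative of $\Lambda^{ij}$ between jumps of $Z$ corresponds to continuity of $\mu^{ij}$ there, since a continuous left derivative is automatically the classical derivative. The density statement $\Lambda^{ij}(\mathrm{d}t) = \mu^{ij}(t)\,\mathrm{d}t$ then follows from the fundamental theorem of calculus applied path-wise on each jump-free interval of $Z$, once one notes that the left-differentiability in (a) rules out atoms of $\Lambda^{ij}$ at non-jump times of $Z$. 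The main technical obstacle is the $o(t-s)$ control of $R(s,t)$ in the absence of uniform bounds on $\Lambda$: this has to be carried out path-wise on $\omega_s^i$, exploiting the local boundedness of the rates between the finitely many reset and upward-jump points guaranteed by Assumption~\ref{as:tech_lambda}.
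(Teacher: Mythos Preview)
Your approach is essentially the same as the paper's: decompose $\mathbb{P}_s^i[Z(t)=j]$ into the one-jump piece $p_s^{ij}(t)$ and a two-or-more-jumps remainder, show the remainder is negligible, identify $p_s^{ij}(t)$ with $\Lambda^{ij}(t)(\omega_s^i)-\Lambda^{ij}(s)(\omega_s^i)$ up to lower order via $p_s^i(u-)\to 1$, and then invoke~\eqref{NoAmbioguityTransitionRates} to pass from $\omega_s^i$ to $\omega$. The paper controls the remainder by explicitly computing $\mathbb{P}_s^i[\tau(\tau(s))\le t]$ through conditioning on the first jump (Proposition~\ref{Proposition:CanoncialSpaceProperties}), whereas you appeal to Proposition~\ref{PropMartingaleProperty}; both routes give the same bound. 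One small caution: your phrasing ``$O((t-s)^2)$'' for $R(s,t)$ presupposes that the $\Lambda$-increments are $O(t-s)$, which is exactly what you want to establish in the direction (b)$\Rightarrow$(a); the paper (and, implicitly, your final paragraph) instead works with $o(t-s)$, obtained because the two-jump integrand vanishes uniformly and the single $\Lambda^{i\largecdot}$-increment in front can be controlled via $\sum_{j}\mu^{ij}$ under (b).
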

\begin{proof}
		The property (i) of Theorem \ref{TheoremCanoncialSpace} and the construction \eqref{pnipnij}  imply that
	\begin{align*}
		\lim_{t \downarrow s}\frac{\mathbb{P}_{s}^i[Z(t \wedge \tau(s))=j](\omega) }{t-s} &= \lim_{t \downarrow s}\frac{1}{t-s}\int_{(s,t]}  p_s^i(u-)(\omega) \Lambda^{ij}(\d u)(\omega_s^i) \\
		& =  	\lim_{t \downarrow s}\frac{\Lambda^{ij}(t)(\omega_s^i)-\Lambda^{ij}(s)(\omega_s^i)}{t-s}
	\end{align*}
	since the integrand converges to $1$ for $t \downarrow s$, uniformly for sufficiently small $t$. 
By similar arguments and  Proposition \ref{Proposition:CanoncialSpaceProperties}, we get
\begin{align*}
	\lim_{t \downarrow s}\frac{1}{t-s}\mathbb{P}_s^i[ \tau(\tau(s)) \leq t](\omega) & =\lim_{t \downarrow s}\frac{1}{t-s}\mathbb{E}_s^i[ \mathbb{P}_{\tau(s)}^{Z(\tau(s))}[ \tau(\tau(s)) \leq t](\omega)\\
	& = \lim_{t \downarrow s}\frac{1}{t-s}\sum_{j:j \neq i}\int_{(s,t]}  \mathbb{P}_u^j[\tau(u)\leq t ](\omega_s^i) p_s^i(u-)(\omega) \Lambda^{ij}(\d u)(\omega_s^i) \\
	& = \lim_{t \downarrow s}\frac{1}{t-s}\sum_{j:j \neq i}\int_{(s,t]} \big(1 - p_s^i(u-)(\omega_s^i)\big) p_s^i(u-)(\omega) \Lambda^{ij}(\d u)(\omega_s^i) \\
	& =0
\end{align*}
	since the integrand converges to $0$ for $t \downarrow s$, uniformly for sufficiently small $t$. Combining both results yields that 
	\begin{align}\label{Eq:muijRepresentationRight} \begin{split}
	\lim_{t \downarrow s}\frac{\mathbb{P}_{s}^i[Z(t )=j](\omega) }{t-s}	& =  	\lim_{t \downarrow s}\frac{\Lambda^{ij}(t)(\omega_s^i)-\Lambda^{ij}(s)(\omega_s^i)}{t-s}\\
	& = 	\lim_{t \downarrow s}\frac{\Lambda^{ij}(t)(\omega)-\Lambda^{ij}(s)(\omega)}{t-s} ,\quad Z(s-)=Z(s),
\end{split}\end{align}
where the last equation uses the right-continuity of $Z$ and  the Assumption \ref{as:tech_lambda}(a).
Since $Z$ is a càdlàg process, for each $\omega \in \Omega$ there exists an $\varepsilon_{\omega}>0$ such that  $\omega_s^i=\omega_t^i$ for  $s \in (t- \varepsilon_{\omega},t]$. By using the same asymptotic arguments as  above, we get
	\begin{align}\label{Eq:muijRepresentationLeft}\begin{split}
	\lim_{s \uparrow t}\frac{\mathbb{P}_{s}^i[Z(t )=j](\omega) }{t-s}	& =  	\lim_{t \uparrow s}\frac{\Lambda^{ij}(t)(\omega_t^i)-\Lambda^{ij}(s)(\omega_t^i)}{t-s}\\
	& =  	\lim_{t \uparrow s}\frac{\Lambda^{ij}(t)(\omega)-\Lambda^{ij}(s)(\omega)}{t-s},
\end{split}\end{align}
where the second equation uses the Assumption \ref{as:tech_lambda}(a).

Now, if statement (a) holds, then $\mu^{ij}$ exists and is continuous in between jump times because of \eqref{Eq:muijRepresentationLeft}. Moreover, in between jump times it equals  $ \lim_{h \downarrow 0 } \frac{\mathbb{P}_{t-h}^i[Z(t)=j]}{h}$  because of \eqref{Eq:muijRepresentationRight} and the differentiability of $\Lambda^{ij}$. In particular, we have  $\Lambda^{ij}(\d t ) = \mu^{ij} (t)  \d t$ since $\mu^{ij}$ is the derivative of $\Lambda^{ij}$ in between jump times and since there are at most countably many jump times.

If statement (b) holds, then $\Lambda^{ij}$ is left-differentiable  because of \eqref{Eq:muijRepresentationLeft}, and the left-derivative is continuous in between jump times. Moreover, in between jump times,  $\Lambda^{ij}$ has a right-derivative that equals the left-derivative because of  \eqref{Eq:muijRepresentationRight}.
\end{proof}
\begin{proposition}[Discrete modeling]\label{PropDiscreteModel}
	The  two following conditions are equivalent:
\begin{itemize}
	\item[(a)]  $\Lambda$ has pure jump paths with jumps only at integer times{.}
	\item[(b)] $\mathbb{E}^i_s[N^{ij}(\d t)] = 0$ for $n \leq s \leq t < n+1$, $i,j \in \mathcal{Z}$, $i \neq j$,  $n\in\mathbb{N}_0$.
\end{itemize}
Moreover, for $\Omega_d := \big\{(t_k,z_k)_{k \in \mathbb{N}_0} \in \Omega: t_k \in \mathbb{N}_0 \cup \{\infty\}, \, k \in \mathbb{N}_0\big\}$ the conditions imply that $ \mathbb{P}[\Omega_d]=1$ as well as
\begin{align*}
	\Delta \Lambda^{ij} (n+1)(\omega)=	\mathbb{P}_{n}^i[Z(n+1)=j](\omega), \quad  \omega \in \Omega_d,
\end{align*}
 for $ i,j \in \mathcal{Z}$, $i \neq j$, $n \in \mathbb{N}_0$.
\end{proposition}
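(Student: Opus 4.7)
The strategy is to prove (a) $\Leftrightarrow$ (b) by chaining Proposition~\ref{PropMartingaleProperty} in one direction and Theorem~\ref{TheoremCanoncialSpace}(i) in the other, and then to obtain both $\mathbb{P}[\Omega_d]=1$ and the jump identity by direct inspection of the explicit formulas~\eqref{DefOfP} and~\eqref{pnipnij} constructed in the proof of Theorem~\ref{TheoremCanoncialSpace}.

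For (a) $\Rightarrow$ (b), apply Proposition~\ref{PropMartingaleProperty} with $Y\equiv 1$ to obtain
\[
\mathbb{E}_s^i\left[\int_{(s,t]} N^{ij}(\d u)\right] = \mathbb{E}_s^i\left[\int_{(s,t]} I^i(u-)\,\Lambda^{ij}(\d u)\right].
\]
Under (a), $\Lambda^{ij}(\cdot)(\omega)$ is pure jump with atoms only at integer times, so for $n\leq s\leq t<n+1$ the interval $(s,t]\subseteq(n,n+1)$ contains no integer and the right-hand side vanishes pathwise, yielding (b).

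For (b) $\Rightarrow$ (a), fix $n\in\mathbb{N}_0$ and $i\in\mathcal{Z}$ and work along the stopped path $\omega_n^i$. Summing (b) over $j\neq i$ and using that $Z(n)=i$ under $\mathbb{P}_n^i$, the expected number of transitions in $[n,t]$ is zero for $t<n+1$, hence $\mathbb{P}_n^i[\tau(n)\geq t](\omega)=1$ for $t\in(n,n+1]$ and every $\omega$. The proof of Theorem~\ref{TheoremCanoncialSpace} identifies this probability with $p_n^i(t-)(\omega)$, so $p_n^i(t-)(\omega)=1$ on $(n,n+1]$. Substituting into~\eqref{pnipnij} forces $\Lambda_c^{i\largecdot}(\omega_n^i)$ to be constant on $[n,n+1)$, $\Delta\Lambda^{i\largecdot}(u)(\omega_n^i)=0$ for $u\in(n,n+1)$, and no reset point in $(n,n+1]$. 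Hence $\Lambda^{ij}(\cdot)(\omega_n^i)$ is constant on $[n,n+1)$ for each $j\neq i$, and the measurability assumption~\eqref{NoAmbioguityTransitionRates} transports this conclusion to arbitrary $\omega$ piece by piece across the intervals $(\tau_k(\omega),\tau_{k+1}(\omega)]\cap(n,n+1)$.

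For the remaining two claims: under (a), each kernel $p_s^{ij}(\d u)$ in~\eqref{pnipnij} is supported on integer times, so the iterated integration~\eqref{DefOfP} places each $\tau_k$ a.s.\ in $\mathbb{N}_0\cup\{\infty\}$, giving $\mathbb{P}[\Omega_d]=1$. For the jump identity, on $\Omega_d$ the absence of $Z$-jumps in $(n,n+1)$ gives $\{Z(n+1)=j\}=\{\tau(n)=n+1,\,\zeta_{k+1}=j\}$ under $\mathbb{P}_n^i$ (with $k$ the index satisfying $\tau_k\leq n<\tau_{k+1}$), so $\mathbb{P}_n^i[Z(n+1)=j]=p_n^{ij}(n+1)$. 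The formula~\eqref{pnipnij} together with $p_n^i((n+1)-)(\omega_n^i)=1$ collapses this to $\Delta\Lambda^{ij}(n+1)(\omega_n^i)$, which equals $\Delta\Lambda^{ij}(n+1)(\omega)$ by~\eqref{NoAmbioguityTransitionRates}. The main obstacle is this pathwise extension step in (b) $\Rightarrow$ (a), where the conclusion established along the distinguished path $\omega_n^i$ must be lifted to arbitrary $\omega$ through a careful piecewise invocation of the measurability~\eqref{NoAmbioguityTransitionRates}.
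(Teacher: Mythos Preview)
Your overall strategy is sound and close to the paper's, but there is a genuine gap in the direction (b)~$\Rightarrow$~(a), precisely at the ``main obstacle'' you flag yourself.

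You establish that $\Lambda^{ij}(\cdot)(\omega_n^i)$ is constant on $[n,n+1)$ for every $\omega$, and then claim that \eqref{NoAmbioguityTransitionRates} lifts this to arbitrary $\omega$ ``piece by piece across $(\tau_k(\omega),\tau_{k+1}(\omega)]\cap(n,n+1)$''. But \eqref{NoAmbioguityTransitionRates} reduces $\Lambda^{ij}(t)(\omega)$ on such a piece to $\Lambda^{ij}(t)(\omega_{\tau_k(\omega)}^i)$, and if $\tau_k(\omega)\in(n,n+1)$ the stopped path $\omega_{\tau_k(\omega)}^i$ has a jump strictly after $n$ and is therefore \emph{not} of the form $\tilde\omega_n^i$ for any $\tilde\omega$. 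Your constancy result, proved only for stops at the integer $n$, does not cover it. The fix is immediate: run your $p_s^i$-argument for every $s\in[n,n+1)$ (condition~(b) is stated for all such $s$), which yields $\Lambda^{ij}(\cdot)(\omega_s^i)$ constant on $[s,n+1)$; then the piece-by-piece lift goes through. This is essentially what the paper does, except that it bypasses the product formula and appeals to Theorem~\ref{TheoremCanoncialSpace}(i) directly with variable $s$: from (b) one gets numerator $\mathbb{E}_s^i[\mathds{1}_{\{\tau(s)\geq u\}}N^{ij}(\d u)]=0$ and denominator $\mathbb{P}_s^i[\tau(s)\geq u]=1$, hence $\Lambda^{ij}(\d u)(\omega)=0$ for $u$ just above $s$, and sliding $s$ through $[n,n+1)$ covers all of $(n,n+1)$ for every $\omega$.

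Two minor remarks. For (a)~$\Rightarrow$~(b) your use of Proposition~\ref{PropMartingaleProperty} is a clean alternative to the paper's route via Theorem~\ref{TheoremCanoncialSpace}(i). In (b)~$\Rightarrow$~(a), the step from ``$\Lambda^{i\largecdot}(\cdot)(\omega_s^i)$ constant'' to ``each $\Lambda^{ij}(\cdot)(\omega_s^i)$ constant'' uses the monotonicity in Assumption~\ref{as:tech_lambda}(b) together with the absence of reset points you just established; you should say so explicitly.
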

\begin{proof}
For $n \leq s < n+1$, Theorem \ref{TheoremCanoncialSpace}(i)  implies that 
\begin{align*}
	\mathbb{P}_{s}^i[  \tau(s) \geq  n+1 ] & = 1- \sum_{j:j \neq i} \int_{(s,n+1) } \mathbb{E}_{s}^i[ \mathds{1}_{\{\tau(s)\geq u\}} N^{ij}( \d u ) ]\\
	& = 1- \sum_{j:j \neq i} \int_{(s,n+1)}  \mathbb{P}_{s}^i[  \tau(s) \geq u ] \, \Lambda^{ij}(\d u). 
\end{align*}
Note that if the interval $(s,n+1)$ contains reset points of $ \Lambda^{i \largecdot}$, then  $\mathbb{P}_{s}^i[  \tau(s) \geq u ]$ is nonzero only strictly before the smallest of these reset points.
If condition (a) holds, then from the latter equation we can conclude that $	\mathbb{P}_{s}^i[  \tau(s) \geq  n+1 ]=1 $, which implies (b). If condition (b) holds, then we can conclude that 
\begin{align*}
	\mathbb{E}_{s}^i[ \mathds{1}_{\{\tau(s)\geq u\}} N^{ij}( \d u ) ]&=0, \\
	\mathbb{P}_{s}^i[  \tau(s) \geq u ] \geq \mathbb{P}_{s}^i[  \tau(s) \geq n+1 ] &= 1, \quad s < u < n+1.
\end{align*}
This implies that  $\Lambda^{ij}(\d u)=0$ for $s < u < n+1$ and $u$ smaller than any reset point of   $ \Lambda^{i \largecdot}$ in $(s,n+1)$.
In particular, $\Lambda^{ij}$ is constant immediately before any reset point, which is impossible, so there are no reset points. That means that (a) holds.

Suppose that the equivalent conditions (a) and (b) hold. From the measurability assumption \eqref{NoAmbioguityTransitionRates} and Theorem \ref{TheoremCanoncialSpace}(i)  we get
\begin{align*}
 \Delta \Lambda^{ij}(n+1)(\omega)&= 	\Delta \Lambda^{ij}(n+1)(\omega_n^i)\\
 & =   \frac{\mathbb{E}_{n}^i [\mathds{1}_{\{\tau(n) \geq n+1\}}(N^{ij}(n+1)-N^{ij}(n)) ](\omega)}{\E_n^i[\mathds{1}_{\{\tau(n) \geq n+1\}}](\omega)}\\
 & = \mathbb{P}_{n}^i[Z(n+1)=j](\omega) ,\quad   \omega \in \Omega_d.
\end{align*}
 Moreover, Proposition \ref{Proposition:CanoncialSpaceProperties}(b) together with condition (b) implies that 
 $$ \mathbb{E}[N^{ij}(\d t)] = 0, \quad n \leq  t < n+1,\, i,j \in \mathcal{Z},\, i \neq j,\, n\in\mathbb{N}_0,$$
 from which we can conclude that   $\mathbb{P}[\tau_n \in (0,\infty) \setminus \mathbb {N} ]=0$, $n \in \mathbb{N}_0$. Thus, 
 $$ \mathbb{P}[\Omega \setminus \Omega_d]  \leq \sum_{n=0}^{\infty} \mathbb{P}[\tau_n \in (0,\infty) \setminus \mathbb {N} ]=0,$$
 which completes the proof.
\end{proof}

\section{Stochastic Kolmogorov backward equation}\label{sec:kolmogorov}

This section studies the time-dynamics of  $$ s \mapsto  \mathbb{P}_{s}^i [A ]$$ for any state $i \in \mathcal{Z}$ and events $A $ that can be observed in finite time. Recall that $\mathbb{P}_{s}^i [A ]$  can be interpreted as a version of the statewise conditional probability $\mathbb{P}[A | \mathcal{F}_{s-}, Z(s)=i]$, see Theorem \ref{TheoremCanoncialSpace}(i).

We say that a process $Y$ is bounded on finite intervals if  for each finite interval $I \subset [0,\infty)$ the mapping $Y : I \times \Omega \rightarrow \mathbb{R}$ is bounded. 
\begin{definition}
	Let  $\mathcal{Y}(\Lambda)$ denote the set of all $\mathcal{F}^-$-adapted, jointly measurable, multivariate  processes $(Y^i)_{i\in \mathcal{Z}}$ that are bounded on finite intervals, and such that the paths of $I^i(t-)Y^i(\d t)$ are càdlàg and of finite variation on each compact interval that contains no reset points of $\Lambda^{i \largecdot}$. 
\end{definition}
\begin{theorem}[stochastic Kolmogorov backward equation]\label{GeneralizKolmogBackwardEq} Let  $A \in \mathcal{F}_{T}$ for $T < \infty$, and let  $P^i$, $i \in \mathcal{Z}$, be given stochastic processes.  The two following statements are equivalent:
\begin{itemize}
	\item[(i)] The multivariate process $(P^i)_{i \in \mathcal{Z}}$  satisfies $P^i(t)= \mathbb{P}_{t}^i[A]$, $t \in [0,T]${.}
	\item[(ii)]  The multivariate process $(P^i)_{i \in \mathcal{Z}}$ is a solution in  $\mathcal{Y}(\Lambda)$ for the backward equation
\begin{align}\label{KolmBackward2}
	0 = \mathds{1}_{\{   \Lambda^{i\largecdot}(t-)< \infty\}} I^i(t-) \bigg( P^i(\d t)+\sum_{j:j\neq  i} \big(P^j(t)-P^i(t) \big) \Lambda^{ij}(\d t)\bigg) 
\end{align}
with terminal value $P^i(T)= \mathbb{P}_T^i[A]$. 
\end{itemize} 
If for a  subset $\mathcal{Z}_0 \subset \mathcal{Z}$ we have $\Lambda^{ij}=0$ for all $i \in \mathcal{Z}_0$ and $j \not\in \mathcal{Z}_0$, then the equivalence is also true for the sub-process $(P^i)_{i  \in \mathcal{Z}_0}$ only.
\end{theorem}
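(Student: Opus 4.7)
The plan centers on the process $M(t) := \sum_{i \in \mathcal{Z}} I^i(t) P^i(t)$, which I will identify as a martingale precisely when $(P^i)$ solves the backward equation. Applying integration by parts to each product $I^i P^i$ (using that $I^i$ has finite variation and \eqref{RelationIN}) and summing over $i$ gives
\begin{align*}
M(t) - M(0) = \sum_i \int_{(0,t]} I^i(u-)\, P^i(\d u) + \sum_{i,j : i \neq j} \int_{(0,t]} \big(P^j(u) - P^i(u)\big)\, N^{ij}(\d u).
\end{align*}
Proposition~\ref{PropMartingaleProperty} identifies the compensator of $N^{ij}$ under each $\mathbb{E}_s^l$-expectation as $\int I^i(u-) \Lambda^{ij}(\d u)$, so — with $(P^j - P^i)$ being $\mathcal{F}^-$-predictable and locally bounded by the $\mathcal{Y}(\Lambda)$ assumption — the second sum decomposes into a predictable finite-variation piece plus a local martingale $m(t)$:
\begin{align*}
M(t) - M(0) = \sum_i \int_{(0,t]} I^i(u-) \Big[ P^i(\d u) + \sum_{j \neq i} \big(P^j(u) - P^i(u)\big)\, \Lambda^{ij}(\d u)\Big] + m(t).
\end{align*}
Reset points are absorbed for free: the construction \eqref{pnipnij} of the canonical kernels via the first-reset time $\rho_s^i$ forces $\mathbb{P}_s^i[\tau(s) \geq \rho_s^i] = 0$, which implies $I^i(u-) \mathds{1}_{\{\Lambda^{i\largecdot}(u-) = \infty\}} = 0$ $\mathbb{P}$-a.s.

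For (ii) $\Rightarrow$ (i), the backward equation \eqref{KolmBackward2} annihilates the integrand of the predictable finite-variation part on $\{\Lambda^{i\largecdot}(u-) < \infty\}$, and reset points contribute nothing by the above. Hence $M$ is a local martingale, and boundedness on $[0,T]$ afforded by $(P^i) \in \mathcal{Y}(\Lambda)$ upgrades it to a true martingale. The terminal condition together with Theorem~\ref{TheoremCanoncialSpace}(ii) applied at $T$ and $A \in \mathcal{F}_T$ yields $M(T) = \sum_i I^i(T) \mathbb{P}_T^i[A] = \mathbb{E}[\mathds{1}_A | \mathcal{F}_T] = \mathds{1}_A$, so $M(t) = \mathbb{E}[\mathds{1}_A | \mathcal{F}_t]$. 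A further invocation of Theorem~\ref{TheoremCanoncialSpace}(ii) delivers $I^i(t) P^i(t) = I^i(t) \mathbb{P}_t^i[A]$; evaluating at the stopped scenario $\omega_t^i$ — on which $I^i(t) = 1$ and $\mathbb{P}_t^i[A]$ is invariant by construction — upgrades to the pointwise identity $P^i(t) = \mathbb{P}_t^i[A]$.

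For (i) $\Rightarrow$ (ii), first verify that $(P^i) := (\mathbb{P}_{\cdot}^i[A]) \in \mathcal{Y}(\Lambda)$: $\mathcal{F}_{t-}$-measurability is built into the canonical kernel, boundedness by $1$ is automatic, and the path regularity (càdlàg and of finite variation on compact intervals free of reset points) follows from the explicit product-integral representation \eqref{pnipnij}--\eqref{DefOfcP}. Now $M(t) = \mathbb{E}[\mathds{1}_A | \mathcal{F}_t]$ is itself a martingale, so in the decomposition above the predictable finite-variation term must vanish by uniqueness of the Doob--Meyer decomposition. Reading off the $i$-th summand via multiplication by $I^i(t-)$ yields the pathwise backward equation on $\{\Lambda^{i\largecdot}(t-) < \infty\}$, after the same stopping-scenario upgrade.

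The restricted-$\mathcal{Z}_0$ statement follows by applying the same proof to $M^0(t) := \sum_{i \in \mathcal{Z}_0} I^i(t) P^i(t)$: the hypothesis $\Lambda^{ij} = 0$ for $i \in \mathcal{Z}_0$, $j \notin \mathcal{Z}_0$ ensures that the backward equation decouples within $\mathcal{Z}_0$ and that paths entering $\mathcal{Z}_0$ remain there, so $M^0$ coincides with $M$ on the relevant event and the terminal condition $\sum_{i \in \mathcal{Z}_0} I^i(T) \mathbb{P}_T^i[A]$ is all that survives on that event. I expect the main technical obstacle to be the careful passage from $\mathbb{P}$-almost-sure identities to pointwise canonical identities: this is handled by consistently evaluating at the stopped scenarios $\omega_t^i$ where the canonical kernel is invariant, together with the pathwise compensation at reset points guaranteed by the Ionescu--Tulcea construction in Theorem~\ref{TheoremCanoncialSpace}.
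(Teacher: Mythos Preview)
Your integration-by-parts decomposition of $M(t)=\sum_i I^i(t)P^i(t)$ and the subsequent compensation step match the paper's argument for (ii)$\Rightarrow$(i) at the level of structure. The crucial difference is that the theorem is a \emph{sure} (path-wise) statement, and your route through $\mathbb{P}$-martingales and the Doob--Meyer decomposition only delivers $\mathbb{P}$-almost-sure identities. Your proposed upgrade ``evaluate at the stopped scenario $\omega_t^i$'' does not close this gap: if the a.s.\ identity $I^i(t)P^i(t)=I^i(t)\mathbb{P}_t^i[A]$ fails on a $\mathbb{P}$-null set $N$, nothing prevents $\omega_t^i\in N$ for every $\omega$; for instance, if $\alpha(i)=0$ then $\{Z(0)=i\}$ is null and all the points $\omega_0^i$ may lie in $N$. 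The paper avoids this by never passing through $\mathbb{P}$: it takes $\mathbb{E}_s^i[\cdot]$ of the decomposition and uses Proposition~\ref{PropMartingaleProperty} (which is a sure identity under the canonical kernels), then iterates along the jump times $\tau_n$ via the sure tower property of Proposition~\ref{Proposition:CanoncialSpaceProperties}(ii) to obtain \eqref{ExpectZeroForPDynamics} and hence $P^i(s)=\mathbb{E}_s^i[\mathds{1}_A]$ everywhere.

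For (i)$\Rightarrow$(ii) the same objection applies to your Doob--Meyer argument, and your verification that $(\mathbb{P}_\cdot^i[A])\in\mathcal{Y}(\Lambda)$ is too light: the finite-variation and c\`adl\`ag property on intervals free of reset points does not fall out of the product-integral formula for $p_s^i$ alone, because $\mathbb{P}_s^i[A]$ involves an iterated integral over all future jump times. The paper instead derives the backward equation by a direct computation: starting from the representation \eqref{RepresPsiA} of $\mathbb{P}_s^i[A]$ via the first-jump kernel, it substitutes the integral equation \eqref{IntEqForpnist} for $p_s^i(u-)$ and applies Tonelli to obtain the increment $\mathbb{P}_s^i[A]-\mathbb{P}_t^i[A]$ explicitly as $\sum_{j\neq i}\int_{(s,t]}(\mathbb{P}_u^j[A]-\mathbb{P}_u^i[A])\,\Lambda^{ij}(\d u)$ on each sojourn interval before the next reset point. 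The required path regularity is then read off from this identity.
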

We interpret the right hand side of Equation~\eqref{KolmBackward2} as a $\sigma$-finite measure, and the equation tells us that this measure is zero on the set of reset points, and it is zero on any compact interval that does not contain  reset points. There exists a countable number of such compact intervals that covers the whole set $\{  t \in [0,\infty)  : \Delta \Lambda^{i\largecdot}(t)\geq 0\}$. So the measure is zero everywhere on $[0,\infty)$.
\begin{example}[Markov process]\label{ex:Markov_KolmogorovBackward}
By Proposition~\ref{prop:Markov_LambdaDeterministic}, a deterministic $\Lambda$ corresponds to a Markov model. In that case, the canonical probability kernels restricted to future events are also deterministic. Thus if $A \in \sigma(Z_T)$, Equation~\eqref{KolmBackward2} can be simplified to
\begin{align*}
	0 = \mathds{1}_{\{   \Lambda^{i\largecdot}(t-)< \infty\}}  \bigg( P^i(\d t)+\sum_{j:j\neq  i} \big(P^j(t)-P^i(t)\big) \Lambda^{ij}(\d t)\bigg).
\end{align*}
If there are also no reset points, {we conclude that
\begin{align*}
	 P^i(\d t) = -\sum_{j:j\neq  i} \big(P^j(t)-P^i(t)\big) \Lambda^{ij}(\d t),
\end{align*}
which recovers the classic Kolmogorov backward equation.}
\end{example}
\begin{proof}[{Proof of Theorem~\ref{GeneralizKolmogBackwardEq}}] At first we show that (i) implies (ii). The boundedness of $P^i$ follows directly from the fact that probabilities cannot be greater than one. 
	In the proof of  Theorem \ref{TheoremCanoncialSpace} we already showed that the probability kernels $p_s^{ij}(t)(\omega)$ are jointly measurable as mappings of $(s,\omega,t)$, see the arguments below \eqref{Ptaun+1Fs-Zsi}. For each $A \in \mathcal{F}_{\tau_n}$ and $n \in \mathbb{N}$, the mapping $ (s,i,\omega) \mapsto \mathbb{P}_s   ^i[A](\omega)$ is defined  by repeated Lebesgue-Stieltjes integration of the probability kernels with respect to the argument $t$ and summation over $i$, see  \eqref{DefOfcP}, so Fubini's theorem yields  joint measurability with respect to $(s,\omega)$. Since $\bigcup_{n \in \mathbb{N}_0} \mathcal{F}_{\tau_n}$ contains a generator of $\mathcal{F}_{T}$,  we also have the  joint measurability property for any $A \in  \mathcal{F}_{T}$.
	The constructions \eqref{pnipnij} and \eqref{DefOfcP}  imply that  $\mathbb{P}_s^i[A](\omega)=\mathbb{P}_s^i[A](\omega_s^i)$, $\omega \in \Omega$, which means that $\mathbb{P}_s^i[A]$ is $\mathcal{F}_{s-}$-measurable. Proposition \ref{Proposition:CanoncialSpaceProperties}(i) yields  that $P^i(T)= \mathds{1}_A$, $i \in \mathcal{Z}$. 
	 It remains to show that \eqref{KolmBackward2} holds. 	
Let $\omega=((t_l,z_l))_{l \in \mathbb{N}_0} \in \Omega$ be arbitrary but fixed.   The definition of  $	p_s^i$  implies that 
\begin{align}\label{IntEqForpnist}
	p_s^i(u-)(\omega_{s}^i) = 1- \sum_{k : k \neq i}\int_{(s,u)}  p_v^i(u-)(\omega_{s}^i)  \Lambda^{ik}(\d v)(\omega_{s}^i), \quad s < u < \rho_s^i (\omega).
\end{align}
From  \eqref{ProjectionProperty2} and the definitions \eqref{DefOfcP} and \eqref{pnipnij}, we obtain the equation
	\begin{align}\label{RepresPsiA}\begin{split}
		\mathbb{P}_{s}^i[ A ](\omega)&= \mathbb{E}_{s}^i\big[  	 \mathbb{P}_{\tau(s)}^{Z(\tau(s))}[A] \big](\omega)\\
		& =   \sum_{j:j \neq i} \int_{(s,\infty] }  \mathbb{P}_{u}^j[A](\omega_{s}^i) \, p_s^{ij}(\d u)( \omega_{s}^i)\\
		& =   \sum_{j:j \neq i}  \int_{(s,\infty] } \mathbb{P}_{u}^j[A](\omega_{s}^i) \, p_s^{i}( u-)(\omega_{s}^i) \, \Lambda^{ij}(\d u)(\omega_{s}^i).
\end{split}	\end{align}
	By replacing $ p_s^{i}(u-)(\omega_{s}^i)$ by \eqref{IntEqForpnist} and applying Tonelli's theorem, we obtain
	 \begin{align*}
	 	&\mathbb{P}_{s}^i[ A ](\omega)- \mathbb{P}_{t}^i[ A ](\omega)\\
	 	&=   \sum_{j:j \neq i}  \int_{(s,\infty] }\mathds{1}_{s< u \leq t}\,  \mathbb{P}_{u}^j[A](\omega_{s}^i) \, \Lambda^{ij}(\d u)(\omega_{s}^i)\\
	 	  &\quad -\sum_{j:j \neq i}  \int_{(s,\infty] }  \mathbb{P}_{u}^j[A](\omega_{s}^i) \sum_{k : k \neq i}\int_{(s,\infty]} (\mathds{1}_{s<v<u}-\mathds{1}_{t<v<u}) p_v^i(u-)(\omega_{s}^i)  \Lambda^{ik}(\d v)(\omega_{s}^i)\, \Lambda^{ij}(\d u)(\omega_{s}^i)\\
	 	&=  \sum_{j:j \neq i}  \int_{[0,\infty] }\mathds{1}_{s< u \leq t}\,  \mathbb{P}_{u}^j[A](\omega_{s}^i) \, \Lambda^{ij}(\d u)(\omega_{s}^i)\\
		&\quad - \sum_{k : k \neq i}\int_{[0,\infty]} \mathds{1}_{s<v \leq t}   \sum_{j:j \neq i}\int_{[0,\infty] }  \mathds{1}_{v<u}\,  \mathbb{P}_{u}^j[A](\omega_{s}^i) p_v^i(u-)(\omega_{s}^i)  \Lambda^{ij}(\d u)(\omega_{s}^i)  \Lambda^{ik}(\d v)(\omega_{s}^i)
	\end{align*}
		 for $ s < t \leq  \tau(s)(\omega)$ and $t < \rho_s^i(\omega)$.
	Since   $(\omega_{s}^i)_{v}^i= \omega_{s}^i$ for $v > s$, the inner integral in the latter line equals $	\mathbb{P}_{v}^i[ A ](\omega_{s}^i)$, see \eqref{RepresPsiA}, so that we can conclude that
		\begin{align*}
			&\mathbb{P}_{s}^i[ A ](\omega)- \mathbb{P}_{t}^i[ A ](\omega)	\\
			&=  \sum_{j:j \neq i} \int_{(s,t]}   \mathbb{P}_{u}^j[A](\omega_{s}^i) \, \Lambda^{ij}(\d u)(\omega_{s}^i)- \sum_{k : k \neq i}\int_{(s, t]} \mathbb{P}_{u}^i[A](\omega_{s}^i)   \Lambda^{ik}(\d u)(\omega_{s}^i)\\
			&=  \sum_{j:j \neq i} \int_{(s,t]}  \big(  \mathbb{P}_{u}^j[A](\omega) - \mathbb{P}_{u}^i[A](\omega)\big)\, \Lambda^{ij}(\d u)(\omega) 
				\end{align*}
				for $ s < t \leq  \tau(s)(\omega)$ and $t < \rho_s^i(\omega)$, using the fact that $ \mathbb{P}_{u}^i[A](\omega_{s}^i)= \mathbb{P}_{u}^i[A](\omega) $ and $\Lambda^{ik}(u)(\omega_{s}^i)=\Lambda^{ik}(u)(\omega)$ for $u \in (s,t]$ and $\omega \in  \{\tau_n \leq s < t \leq \tau_{n+1}\}$ due to their  $\mathcal{F}^-$-adaptedness and \eqref{GnFtEquivalence}. 
		All in all, this verifies that (a) implies (b).  If  $\Lambda^{ij}=0$ for all $i \in \mathcal{Z}_0$ and $j \not\in \mathcal{Z}_0$ for a subset $\mathcal{Z}_0 \subset \mathcal{Z}$, then the subset of equations \eqref{KolmBackward2} on $ \mathcal{Z}_0$ depends only on the sub-process $(P^i)_{i  \in \mathcal{Z}_0}$.
			
		Now we show that (i) implies (ii).  
		Integration by parts yields that 
		\begin{align*}
			&\sum_i I^i(s) P^i(s)  - \sum_i I^i(t)P^i(t)\\
			& = \sum_{i,j:j \neq i} \int_{(s,t]}  (P^j(u)-P^i(u)) N^{ij}(\d u) +  \sum_i \int_{(s,t]}  I^i (u- ) P^i(\d u )
		\end{align*}
		 for $ s < t \leq  \tau(s)(\omega)$ and $t < \rho_s^i(\omega)$.
		We subtract  the equation \eqref{KolmBackward2} and rearrange the terms in order to  arrive at the equation
		\begin{align*}
			&\sum_i I^i(s) P^i(s) - \sum_i I^i(t)P^i(t)\\
			&=  \sum_{j,j:j \neq i} \int_{(s,t]}   (P^j(u)-P^i(u)) \big( N^{ij}(\d u) - I^i(u-) \Lambda^{ij}(\d u )\big)
		\end{align*}
		for $ s < t \leq  \tau(s)(\omega)$ and $t < \rho_s^i(\omega)$. 
		By taking the expectation $\mathbb{E}_{s}^i[\cdot]$ on both sides,  applying Proposition \ref{PropMartingaleProperty},  and using the fact that $\mathbb{P}_s^i [\tau(s) < \rho_s^i]=1$, we obtain 
		\begin{align*}
			&\mathbb{E}_{s}^i\Big[ \sum_j I^j(s) P^j(s)  - \sum_j I^j(\tau(s))P^j(\tau(s))\Big]=0.
		\end{align*}
		By repeating this argument along the sequence of jump times $(\tau_n)_{n \in \mathbb{N}_0}$, which converges to $\lim_{n \rightarrow \infty }\tau_n = \infty> T$ by the definition of $\Omega$,  and using the tower property of conditional expectations according to Proposition \ref{Proposition:CanoncialSpaceProperties}(b), we even get 
			\begin{align}\label{ExpectZeroForPDynamics}
			&\mathbb{E}_{s}^i\Big[ \sum_j I^j(s) P^j(s)  - \sum_j I^j(T)P^j(T)\Big]=0, \quad s \leq T.
		\end{align}
		By applying Proposition \ref{Proposition:CanoncialSpaceProperties}(i) and using the equation  $\sum_j I^j(T)P^j(T)= \mathds{1}_A$, we can simplify the latter equation as
		\begin{align*}
			& P^i(s) - \mathbb{E}_{s}^i[ \mathds{1}_A ], \quad s \leq T,
		\end{align*}
		which verifies (i).   	If  $\Lambda^{ij}=0$ for all $i \in \mathcal{Z}_0$ and $j \not\in \mathcal{Z}_0$ for a subset $\mathcal{Z}_0 \subset \mathcal{Z}$, then
		 we necessarily have  $\mathbb{P}_s^{i}[Z(t)=j]=0$,    $i \in \mathcal{Z}_0$, $j \in \mathcal{Z}\setminus\mathcal{Z}_0$, $0 \leq s \leq t$,  according to the definition of $\mathbb{P}_s^{i}$ in  \eqref{pnipnij} and \eqref{DefOfcP}.  Therefore, in case of $i \in \mathcal{Z}_0$, in equation \eqref{ExpectZeroForPDynamics} we can restrict the sums over $j$ to the subset $\mathcal{Z}_0$. This means that the $\mathcal{Z}_0$-subset of equations \eqref{KolmBackward2} already suffices to obtain $P^i(s) - \mathbb{E}_{s}^i[ \mathds{1}_A ]$, $i \in \mathcal{Z}_0$. 
\end{proof}
\begin{example}
	In the setting of Proposition~\ref{PropAbsolContinModel}, Equation~\eqref{KolmBackward2} almost surely corresponds to the  differential equation
	\begin{align*}
			0 =  I^i(t-) \bigg( \frac{\d}{\d t}P^i(t)+\sum_{j:j\neq  i} \big(P^j(t)-P^i(t) \big) \mu^{ij}( t)\bigg) .
	\end{align*}
\end{example}
\begin{example}
	In the setting of Proposition~\ref{PropDiscreteModel}, Equation~\eqref{KolmBackward2} almost surely corresponds to the backward recursion equation
	\begin{align*}
		{}_{T-(n-1)}p^{ik}_{n-1} =   {}_{T-n}p^{ik}_{n}  + \sum_{j: j \neq i}   \big( {}_{T-n}p^{jk}_{n} - {}_{T-n}p^{ik}_{n}\big) q^{ij}_{n-1}  , \quad Z(n-1)=i,
	\end{align*}
	for $n \in \{0,1, \ldots, T-1\}$  with terminal condition $ {}_{0}p^{ik}_{T}= \mathds{1}_{i=k}$, where 
	\begin{align*}
			{}_{T-n}p^{ik}_n:= \mathbb{P}_{n}^i[ Z(T)=k].
	\end{align*} 
\end{example}

\section{State-wise conditional expectation processes}\label{sec:statewise}

This section studies the time-dynamics of  $$s \mapsto \mathbb{E}_{s}^i [ Y(T)- Y(s) ]$$ for any state $i \in \mathcal{Z}$ and an $\mathcal{F}$-adapted càdlàg process $Y$ that is sufficiently integrable. We first introduce  a large class of integrable processes $Y$, and then we study the path properties of the above expectation process.
\begin{definition}
	Let $\mathbb{Y}^+$ denote the set of all $\mathcal{F}$-adapted, nonnegative, nondecreasing, univariate  càdlàg processes
	$Y$ for which there exists a growth bound  
	\begin{align}\label{GrowthBoundCond}
		Y(t) \leq g(t) \Big( 1+ \sum_{i,j:i \neq j} N^{ij}(t )\Big)^n, \quad t \geq 0,
	\end{align}
	for some function  $g:[0, \infty) \rightarrow [0,\infty)$ and a positive integer $n \in \mathbb{N}$; both $g$ and $n$ may depend on $Y$. By $\mathbb{Y}$ we denote the set of all processes that can be generated as a difference of processes from $\mathbb{Y}^+$.	
\end{definition}
Note that $Y \in \mathbb{Y}$ has paths of finite variation on finite intervals since it equals the difference of monotone processes.
\begin{proposition} \label{BoundedExpectedN}
	For  $i,j \in \mathcal{Z}$, $i \neq j$,  $ T \in  [0,\infty)$, and $n \in \mathbb{N}$, it holds that 
	\begin{align*}
		\sup_{0 \leq s \leq t \leq T}\sup_{\omega \in \Omega } \mathbb{E}_{s}^i\big[  \big( N^{ij}(t) - N^{ij}(s)\big)^n\big](\omega) < \infty.
	\end{align*}
	 In particular, for $Y \in \mathbb{Y}$ we have 
	 \begin{align*}
	 	\sup_{0 \leq s \leq t \leq T}\sup_{\omega \in \Omega } \mathbb{E}_{s}^i\big[  |Y(t)-Y(s)|\big](\omega) < \infty.
	 \end{align*}
\end{proposition}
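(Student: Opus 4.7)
The plan is to first establish the moment bound by induction on $n$ using the compensator identity of Proposition~\ref{PropMartingaleProperty}, and then derive the bound for $Y \in \mathbb{Y}$ from the growth condition~\eqref{GrowthBoundCond}. A crucial reformulation I will use is to work directly with the \emph{increment}
\[
\tilde N(u) := N^{ij}(u) - N^{ij}(s), \quad u \geq s,
\]
rather than with $N^{ij}(u)$ itself, since $N^{ij}(s)(\omega)$ is finite but not uniformly bounded over $\omega \in \Omega$. The unit-jump structure of $\tilde N$ gives the pathwise telescoping identity
\[
\tilde N(t)^n = \sum_{m=0}^{n-1}\binom{n}{m}\int_{(s,t]} \tilde N(u-)^m \, N^{ij}(\d u).
\]

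I then apply $\mathbb{E}_s^i[\cdot]$ and invoke Proposition~\ref{PropMartingaleProperty}, whose integrability hypothesis is verified at each stage from the previous induction step, to obtain
\[
\mathbb{E}_s^i\bigl[\tilde N(t)^n\bigr] = \sum_{m=0}^{n-1}\binom{n}{m}\,\mathbb{E}_s^i\Bigl[\int_{(s,t]}\tilde N(u-)^m I^i(u-)\,\Lambda^{ij}(\d u)\Bigr].
\]
For $(i,j)$ in the set $J$ of Assumption~\ref{as:tech_lambda}(d), $\Lambda^{ij}(T)$ is uniformly bounded by some $M_T$, and combined with the pathwise monotonicity $\tilde N(u-) \leq \tilde N(t)$ for $u \leq t$ this reduces to the recursion
\[
\mathbb{E}_s^i\bigl[\tilde N(t)^n\bigr] \leq M_T \sum_{m=0}^{n-1}\binom{n}{m}\,\mathbb{E}_s^i\bigl[\tilde N(t)^m\bigr].
\]
An induction on $n$ starting from $\mathbb{E}_s^i[1]=1$ then yields a uniform polynomial bound in $M_T$, valid for all $0 \leq s \leq t \leq T$ and $\omega \in \Omega$.

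For a general transition $(i,j)$, I will invoke the cyclic estimate~\eqref{UpperEstimateForJumps}, whose proof applies verbatim to the sequence of transitions occurring in $(s,t]$:
\[
N^{ij}(t) - N^{ij}(s) \leq |\mathcal{Z}|-1 + |\mathcal{Z}|\sum_{(a,b)\in J}\bigl(N^{ab}(t) - N^{ab}(s)\bigr).
\]
Raising to the $n$-th power, expanding multinomially, and applying H\"older's inequality reduces each mixed moment to a product of the $J$-moments already controlled, completing the first claim. For the second claim, I decompose $Y = Y^+ - Y^-$ with $Y^{\pm} \in \mathbb{Y}^+$ nondecreasing, so that $|Y(t)-Y(s)| \leq (Y^+(t)-Y^+(s)) + (Y^-(t)-Y^-(s))$, and invoke the growth bound~\eqref{GrowthBoundCond} to dominate each summand by a polynomial in the counting processes, whose expectation is then controlled by the first part.

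The main technical obstacle is precisely the need to formulate everything on the increment $\tilde N$. A naive telescoping estimate of $\mathbb{E}_s^i[N^{ij}(t)^n]$ would produce contributions involving $N^{ij}(s)(\omega)$, which varies unboundedly across $\omega$; the estimate would fail to be uniform in $\omega$ and the supremum in the statement would be infinite. Keeping the induction formulated directly on $\tilde N$ ensures that both the base case and the inductive step remain uniform in $(s,\omega)$, and verifying the integrability hypothesis of Proposition~\ref{PropMartingaleProperty} at each step from the previous bound is the corresponding bookkeeping.
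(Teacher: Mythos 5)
Your argument is essentially the paper's own proof: the same binomial/telescoping identity for the increments $N^{ij}(\cdot)-N^{ij}(s)$, the compensator identity of Proposition~\ref{PropMartingaleProperty} with the integrability hypothesis checked inductively, the uniform bound on the rates indexed by $J$, the cyclic estimate~\eqref{UpperEstimateForJumps} to pass to general transitions, and induction on the moment order, with only cosmetic deviations (you invoke H\"older for the mixed moments where the paper uses the elementary inequality $(a_1+\cdots+a_r)^{m+1}\le r^m\sum_l a_l^{m+1}$, and you settle the $J$-pairs first and then general pairs, whereas the paper bounds the sum over all pairs at each induction step). The second claim is dispatched exactly as tersely as in the paper, namely by combining the moment bounds from the first part with the growth condition~\eqref{GrowthBoundCond}.
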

\begin{proof}
	Let $J \subset \{(j,k) \in \mathcal{Z}^2: j  \neq k\}$ be the subset of transitions for which the  corresponding  transition rates  are bounded on finite intervals, i.e.
	\begin{align*}
		\sup_{0 \leq s \leq t \leq T} \big|\Lambda^{jk}(t)-\Lambda^{jk}(s) \big| \leq \big|\Lambda^{jk}(T)-\Lambda^{jk}(0) \big| \leq C < \infty, \quad (j,k) \in J,
	\end{align*}
	for a deterministic constant $C$.
	Thus, by equation \eqref{UpperEstimateForJumps} and \eqref{CompensatorEquation}  we  get the finite upper bound
	\begin{align*}\
		\mathbb{E}_{s}^i\bigg[ \sum_{j,k: j\neq k}\big( N_{jk}(t)-N_{jk}( s)\big)\bigg] &\leq | \mathcal{Z}|-1+ | \mathcal{Z}|\,	\mathbb{E}_{s}^i\bigg[ \sum_{(j,k) \in J}\big(\Lambda^{jk}(t)-\Lambda^{jk}(s)\big)\Big] \\
		& \leq  | \mathcal{Z}|-1+ | \mathcal{Z}| \, | \mathcal{Z}| (   | \mathcal{Z}| -1) C
	\end{align*}
	uniformly	for all $\omega \in \Omega$ and all $0 \leq s \leq t\leq T$.	 Since the addends on the left hand side are all non-negative, the first assertion follows for $n=1$. We generalize the result to any $n \in \mathbb{N}$ by induction. Suppose that the statement of the proposition is true for all $n \leq m \in \mathbb{N}$. Since $(a+1)^{m+1} - a^{m+1} = \sum_{l=0}^m \binom{m+1}{l} a ^l $ for any $a \in [0,\infty) $, we have 
	$$  \big( N^{jk}(t)-N^{jk}( s)\big)^{m+1} =  \sum_{l=0}^m \binom{m+1}{l} \int_{(s,t]} \big( N^{jk}(u-)-N^{jk}( s)\big)^{l} N^{jk} (\d u) .  $$
	From Proposition \ref{PropMartingaleProperty},  the monotone convergence theorem, and the monotony of $\E_s^i[\cdot]$, we can conclude that
	\begin{align*} 	
		 \mathbb{E}_{s}^i\Big[ \big( N^{jk}(t)-N^{jk}( s)\big)^{m+1} \Big] &=   \sum_{l=0}^m \binom{m+1}{l}  \mathbb{E}_{s}^i\bigg[  \int_{(s,t]} \big( N^{jk}(u-)-N^{jk}( s)\big)^{l} I^j(u-) \Lambda^{jk} (\d u) \bigg]  \\
		 &  \leq  C \sum_{l=0}^m \binom{m+1}{l}  \mathbb{E}_{s}^i\Big[  \big( N^{jk}(t-)-N^{jk}( s)\big)^{l}  \Big], \quad (j,k) \in J. 
	\end{align*}
	Because of the induction assumption, the latter term has an upper bound, uniformly in the parameters $\omega \in \Omega$ and $0\leq s \leq t \leq T$. By applying \eqref{UpperEstimateForJumps} and using the fact that $(a_1 +\cdots + a_r)^{m+1} \leq  r^m (a_1^{m+1} + \cdots + a_r^{m+1})$ for any $a_1, \ldots, a_r \in [0,\infty)$ and $r \in \mathbb{N}$,  we also get  a uniform finite upper bound for  $\sum_{j,k:j \neq k} \mathbb{E}_{s}^i[ ( N^{jk}(t)-N^{jk}( s))^{m+1}] $. The non-negativity of the addends  implies that the  upper bound applies also for the individual addends $\mathbb{E}_{s}^i[ ( N^{jk}(t)-N^{jk}( s))^{m+1}]$, $j,k \in \mathcal{Z}$, $j \neq k$. This completes the induction, so the first assertion is verified for all $n \in \mathbb{N}$. 
	
	The second assertion  follows directly from the first assertion by applying assumption \eqref{GrowthBoundCond}.
\end{proof}
\begin{proposition}\label{ContainerTheoremE}
	For $Y \in \mathbb{Y}$, the processes  $Y^i$, $i \in \mathcal{Z}$, defined by 
	\begin{align}\label{DefOfYi}
		Y^i(t)(\omega):=Y(t)(\omega_t^i), \quad t \geq 0,\, \omega \in \Omega,
	\end{align}
	are 
	$\mathcal{F}^-$-adapted and jointly measurable, and the paths of $I^i(t-) Y^i(\d t)$ are   càdlàg  and of  finite variation on finite intervals. 
		It holds that
		\begin{align}\label{CanonicRepres}
			 Y(\d t) = \sum_{i} I^i(t-)  Y^i(\d t) + \sum_{i,j:i \neq j} (Y^j(t)-Y^i(t)) N^{ij}(\d t). 
		\end{align}
\end{proposition}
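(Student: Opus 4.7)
The plan is to split the proof into three pieces: the measurability assertions for $Y^i$, the pathwise decomposition~\eqref{CanonicRepres}, and the path regularity of $I^i(t-)Y^i(\d t)$; the third will drop out of the second.

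For measurability, I would follow the template from the proof of Theorem~\ref{TheoremCanoncialSpace}. The map $(t,i,\omega)\mapsto\omega_t^i$ is jointly measurable by~\eqref{omegastopped}, being built from countably many measurable case distinctions on the $(\tau_n,\zeta_n)$. Composed with the jointly measurable, $\mathcal{F}$-adapted process $Y$, this shows $Y^i(t)(\omega)=Y(t)(\omega_t^i)$ is jointly measurable. For $\mathcal{F}^-$-adaptedness, I would note that~\eqref{omegastopped} expresses $\omega_t^i$ on the event $\{\tau_n<t\leq\tau_{n+1}\}$ as a measurable function of $(\tau_0,\zeta_0,\ldots,\tau_n,\zeta_n)$ and the label $i$; hence $Y^i(t)$ is $\mathcal{G}_n$-measurable there, which by~\eqref{GnFtEquivalence} means $\mathcal{F}_{t-}$-measurable on that event, and summing over $n$ yields adaptedness to $\mathcal{F}^-$.

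The decomposition~\eqref{CanonicRepres} is the heart of the argument. Fix $\omega$ and consider the intervals $(\tau_n,\tau_{n+1}]$ cut out by the jumps of $Z$. When $\zeta_n=i$, the first branch of~\eqref{omegastopped} makes $\omega_t^i$ a $t$-free scenario on $(\tau_n,\tau_{n+1}]$ that coincides with $\omega$ on $[0,t)$, so the $\mathcal{F}_t$-measurability of $Y$ forces $Y^i(t)(\omega)=Y(t)(\omega)$ for all $t\in(\tau_n,\tau_{n+1})$. At the endpoint $\tau_{n+1}$, the stopped scenario still sits in $i$ while $\omega$ has jumped, so $Y^i(\tau_{n+1})(\omega)$ and $Y(\tau_{n+1})(\omega)$ may differ, but taking left limits in the previous identity gives $Y^i(\tau_{n+1}-)(\omega)=Y(\tau_{n+1}-)(\omega)$. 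On the other hand, for the new state $j=\zeta_{n+1}$ the second branch of~\eqref{omegastopped} shows $\omega_{\tau_{n+1}}^j$ agrees with $\omega$ on $[0,\tau_{n+1}]$, giving $Y^j(\tau_{n+1})(\omega)=Y(\tau_{n+1})(\omega)$. It then suffices to check~\eqref{CanonicRepres} atom by atom: on open intervals between jumps the right-hand side collapses to $I^i(t-)Y^i(\d t)=Y(\d t)$ with $i=\zeta_n$, the counting terms vanishing; at a jump time $\tau_{n+1}$ with $(i,j)=(\zeta_n,\zeta_{n+1})$ it equals $\Delta Y^i(\tau_{n+1})+(Y^j(\tau_{n+1})-Y^i(\tau_{n+1}))=(Y^i(\tau_{n+1})-Y(\tau_{n+1}-))+(Y(\tau_{n+1})-Y^i(\tau_{n+1}))=\Delta Y(\tau_{n+1})$, matching the left-hand side.

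The path regularity then follows: on intervals where $I^i(t-)=1$ the measure $I^i(t-)Y^i(\d t)$ is the restriction of $Y(\d t)$, and at each right endpoint $\tau_{n+1}$ with $\zeta_n=i$ it carries only the single finite atom $Y^i(\tau_{n+1})-Y(\tau_{n+1}-)$; since $Y\in\mathbb{Y}$ is by definition a difference of monotone càdlàg processes and $Z$ has only finitely many jumps in any finite interval, $I^i(t-)Y^i(\d t)$ is càdlàg and of finite variation on finite intervals. The main obstacle I anticipate is the bookkeeping at the jump times of $Z$: one must use $\mathcal{F}_t$-measurability of $Y$ to identify $Y^j(\tau_{n+1})$ with $Y(\tau_{n+1})$ for the correct $j=\zeta_{n+1}$ while allowing $Y^i(\tau_{n+1})\neq Y(\tau_{n+1})$ for the old state $i=\zeta_n$, and to match the left limits on both sides; once that is done, the rest is just unwinding the definition of $\omega_t^i$.
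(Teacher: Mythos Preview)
Your proposal is correct. The measurability arguments match the paper's, but for the decomposition~\eqref{CanonicRepres} the paper takes a shorter route: it first records the pointwise identity $Y(t)=\sum_i I^i(t)Y^i(t)$ (immediate from $\mathcal{F}$-adaptedness of $Y$, since $I^i(t)=1$ forces $\omega$ and $\omega_t^i$ to agree on $[0,t]$), and then obtains~\eqref{CanonicRepres} by integration by parts combined with $I^i(\d t)=\sum_{j:j\neq i}(N^{ji}-N^{ij})(\d t)$. Your atom-by-atom verification is more elementary and avoids having to justify the product rule for $I^iY^i$ when $Y^i$ has not itself been shown to be of finite variation on intervals where $I^i(t-)=0$. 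For the path regularity of $I^i(t-)Y^i(\d t)$ the paper simply writes $I^i(t-)Y^i(\d t)=I^i(t-)Y(\d t)$; your analysis is in fact more precise here, since at an exit time $\tau_{n+1}$ from state $i$ the atom of $I^i(t-)Y^i(\d t)$ equals $Y^i(\tau_{n+1})-Y(\tau_{n+1}-)$ rather than $\Delta Y(\tau_{n+1})$, though either way the finite-variation conclusion follows. One small wording fix: for $t\in(\tau_n,\tau_{n+1})$ with $\zeta_n=i$, the scenarios $\omega$ and $\omega_t^i$ agree on $[0,t]$, not just $[0,t)$, and it is agreement on $[0,t]$ that $\mathcal{F}_t$-measurability of $Y(t)$ requires.
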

\begin{proof}
	By the definition \eqref{omegastopped}, the mapping $\omega  \rightarrow \omega_t^i$ is $\mathcal{F}_{t-}$-measurable, so the process $Y^i$ is $\mathcal{F}^-$-adapted.  
	The mapping $(t,\omega) \mapsto Y(t)(\omega)$ is measurable since the process $Y$ has  càdlàg paths. 
		 The mapping $(t,\omega) \mapsto  (t,\omega_t^i)$ is measurable as a mapping from the measurable space $([0,\infty) \times \Omega, \mathcal{B}([0,\infty))\otimes \mathcal{F}_{\infty})$ to the same  space, since it is  composed of simple functions and countably many case differentiations, see \eqref{omegastopped}. Since $Y^i$ is  a composition of the latter two mappings, it is also measurable as a mapping of $(t,\omega)$. 		 Since $I^i(t-) Y^i(\d t) = I^i(t-) Y(\d t)$, the paths of   $I^i(t-) Y^i(\d t) = I^i(t-) Y(\d t)$ are càdlàg and of finite variation on finite intervals.  
Since  $Y $ is $\mathcal{F}$-adapted, 
we have that
\begin{align}\label{BEqualsEB}
	Y(t) = \sum_i I^i(t) Y^i(t) ,\quad t \geq 0.
\end{align}
By applying integration by parts, we obtain \eqref{CanonicRepres}.
\end{proof}

\begin{theorem}\label{DefStatewiseCondExpect0}
	For  $Y \in \mathbb{Y}$, let  $Y^i$, $i \in \mathcal{Z}$, be defined by \eqref{DefOfYi}. 
		For given stochastic processes $E^i$, $i \in \mathcal{Z}$, the two following statements are equivalent:
		\begin{itemize}
			\item[(i)]  The multivariate process  $(E^i)_{i \in \mathcal{Z}}$ satisfies $E^i(t)= \mathbb{E}_{t}^i[Y(T)-Y(t)]$ for $t \in [0,T]${.}
			\item[(ii)]  The multivariate process  $(E^i)_{i \in \mathcal{Z}}$ is a solution of  $\mathcal{Y}(\Lambda)$ for the backward equation
			\begin{align}\label{ThieleEq0} \begin{split}
					0=\mathds{1}_{\{  \Lambda^{i\largecdot}(t-)< \infty\}} I^i(t-)  \bigg( E^i(\d t)   +  Y^i(\d t) + \sum_{j : j\neq i}(Y^{j}(t)-Y^i(t)+ E^j(t)-E^i(t)) \Lambda^{ij}(\d t)\bigg)
			\end{split}\end{align}
		 with terminal value $E^i(T)=0$. 
		\end{itemize}
		If for a  subset $\mathcal{Z}_0 \subset \mathcal{Z}$ we have $\Lambda^{ij}=0$ for all $i \in \mathcal{Z}_0$ and $j \not\in \mathcal{Z}_0$, then the equivalence is also true for the sub-process $(E^i)_{i  \in \mathcal{Z}_0}$ only.
\end{theorem}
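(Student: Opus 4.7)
The plan is to mirror the two-part structure of the proof of Theorem~\ref{GeneralizKolmogBackwardEq}, with the canonical decomposition~\eqref{CanonicRepres} absorbing the contribution of $Y$. First I would establish the measurability and boundedness prerequisites: joint measurability and $\mathcal{F}^-$-adaptedness of $E^i(t) := \mathbb{E}_t^i[Y(T) - Y(t)]$ follow from the kernel construction~\eqref{DefOfcP} and the $\mathcal{F}^-$-adaptedness of $Y^i$ from Proposition~\ref{ContainerTheoremE}, while finite-interval boundedness is provided by Proposition~\ref{BoundedExpectedN}. The remaining path regularity of $I^i(t-)E^i(\d t)$ is read off from the backward equation itself once derived.

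The engine of both directions is the auxiliary process $M(t) := E^{Z(t)}(t) + Y(t) = \sum_i I^i(t) E^i(t) + Y(t)$. Integration by parts combined with~\eqref{RelationIN} and~\eqref{CanonicRepres} yields the universal identity
\begin{align*}
M(t) - M(s)
&= \sum_i \int_{(s,t]} I^i(u-)\,(E^i(\d u) + Y^i(\d u)) \\
&\quad + \sum_{i,j: j\neq i} \int_{(s,t]} (E^j(u) - E^i(u) + Y^j(u) - Y^i(u))\,N^{ij}(\d u),
\end{align*}
valid for any candidate $(E^i) \in \mathcal{Y}(\Lambda)$.

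For (ii)$\Rightarrow$(i), substituting~\eqref{ThieleEq0} into the sojourn integral cancels the drift against the $I^i(u-)\Lambda^{ij}$-part of the jump integral, turning the right-hand side into a pure compensated-counting term. Taking $\mathbb{E}_s^i[\cdot]$ and applying Proposition~\ref{PropMartingaleProperty} (valid up to $\tau(s)$, then iterated along $(\tau_n)$ via the tower property Proposition~\ref{Proposition:CanoncialSpaceProperties}(ii)) kills the right-hand side and produces $\mathbb{E}_s^i[M(T) - M(s)] = 0$. With $E^i(T) = 0$ this rearranges to $E^i(s) = \mathbb{E}_s^i[Y(T) - Y(s)]$. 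For (i)$\Rightarrow$(ii), the tower property forces $\mathbb{E}_s^i[M(t)] = \mathbb{E}_s^i[Y(T)] = M(s)$ on $\{Z(s) = i\}$; in the universal identity, Proposition~\ref{PropMartingaleProperty} annihilates the compensator part of the jump integral, so the drift integrand must have zero $\mathbb{E}_s^i$-integral for every $s \leq t$ and every $i$, which forces~\eqref{ThieleEq0} as a $\sigma$-finite measure identity. The sub-process claim for $\mathcal{Z}_0$ reduces exactly as in Theorem~\ref{GeneralizKolmogBackwardEq}, because $\mathbb{P}_s^i$ gives zero mass to paths leaving $\mathcal{Z}_0$ whenever $\Lambda^{ij} = 0$ for $i \in \mathcal{Z}_0$, $j \notin \mathcal{Z}_0$.

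The main obstacle is the last step in (i)$\Rightarrow$(ii): promoting vanishing of the $\mathbb{E}_s^i$-expectation of the drift to path-wise vanishing of the integrand everywhere on $\Omega$, not merely almost surely. The integrand is $\mathcal{F}^-$-adapted, so its $\Lambda^{ij}$-integral is a predictable finite-variation process whose conditional expectations under every $\mathbb{E}_s^i$ vanish, and it must therefore vanish as a measure. Lifting this to the everywhere statement~\eqref{ThieleEq0} relies on the everywhere character of the canonical $\mathbb{E}_s^i$ from Theorem~\ref{TheoremCanoncialSpace}, together with an appeal to the explicit kernel formula~\eqref{DefOfcP} to identify the drift integrand directly between successive jump times before passing to the differential form, in close analogy with the final step of the Kolmogorov proof.
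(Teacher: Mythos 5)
Your direction (ii)$\Rightarrow$(i) is essentially the paper's argument (integration by parts on $\sum_i I^i E^i$ plus $Y$, cancellation against the compensator via Proposition~\ref{PropMartingaleProperty}, iteration along $(\tau_n)$ with the tower property), and that part is fine. The problem is the direction (i)$\Rightarrow$(ii). First, your ``universal identity'' for $M(t)=\sum_i I^i(t)E^i(t)+Y(t)$ is obtained by integrating by parts against $E^i(\d u)$, which presupposes that the paths of $I^i(t-)E^i(\d t)$ are c\`adl\`ag of finite variation --- but that is precisely part of what statement (ii) asserts and what must be proved in this direction; you cannot ``read the path regularity off the backward equation itself once derived'' when the derivation already used it. Second, and more seriously, the step from ``the drift integrand has zero $\mathbb{E}_s^i$-integral for every $s\le t$ and every $i$'' to the sure, path-wise identity~\eqref{ThieleEq0} does not follow. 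Even though the canonical $\mathbb{E}_s^i$ are defined everywhere on $\Omega$, vanishing of conditional expectations never by itself forces the integrand to vanish for every $\omega$; at best it gives an almost-sure statement, which is exactly what the theorem is designed to avoid. The mechanism that would make such a de-conditioning work --- that on $(s,\tau(s)]$ the $\mathcal{F}^-$-adapted drift is a deterministic functional of the frozen path $\omega_s^i$, so that $\mathbb{E}_s^i$ of the drift integral equals $\int \mathbb{P}_s^i[\tau(s)\ge u]\,(\cdot)(\d u)(\omega_s^i)$, and one then needs strict positivity of $p_s^i(u-)$, which fails after an atom of size one in $\Lambda^{i\largecdot}$ --- is neither stated nor handled; you flag this as ``the main obstacle'' but resolve it only by gesturing at the kernel formula ``in close analogy with the final step of the Kolmogorov proof,'' which is not an argument.

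The paper's proof of (i)$\Rightarrow$(ii) takes a different and non-expectation-based route that you would need to reproduce in substance: it approximates $Y(T)-Y(s)$ by simple combinations of indicators, applies Theorem~\ref{GeneralizKolmogBackwardEq} together with dominated convergence to obtain the explicit path-wise representation
\begin{align*}
E^i(t)-E^i(s)= \sum_{j:j \neq i}\int_{(s,t]}  \big(\E_{u}^j[Y(T)-Y(s)]-\E_{u}^i[Y(T)- Y(s)]\big)\Lambda^{ij}(\d u) + \E_{t}^i[Y(s)- Y(t)]
\end{align*}
on $(s,\tau(s)]\cap(0,\rho_s^i)$ for $Z(s)=i$, and only then reads off both the membership $(E^i)\in\mathcal{Y}(\Lambda)$ and, after inserting Proposition~\ref{Proposition:CanoncialSpaceProperties}(i) and~\eqref{CanonicRepres}, the backward equation~\eqref{ThieleEq0}, surely and including through atoms of $\Lambda^{i\largecdot}$ of size one, because the identity comes directly from the kernel construction rather than from inverting a conditional expectation. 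Until you carry out that path-wise derivation (or an equivalent one), your (i)$\Rightarrow$(ii) has a genuine gap.
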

We interpret the right hand side of Equation~\eqref{ThieleEq0} as a $\sigma$-finite measure, and the equation tells us that this measure is zero on the set of reset points, and it is zero on any compact interval that does not contain  reset points. There exists a countable number of such compact intervals that covers the whole set $\{  t \in [0,\infty)  : \Delta \Lambda^{i\largecdot}(t)\geq 0\}$. So the measure is zero everywhere on $[0,\infty)$.
\begin{proof}
	 At first, we show that (i) implies (ii). 
	 	Proposition \ref{BoundedExpectedN} yields that $E^{i}$ is bounded on finite intervals. 	For each $u \in [0,\infty)$, we define a stochastic process $E^i_u$  by 
	 $$E_u^{i}(t):=  \mathbb{E}_{t}^i[Y(T)-Y(u)], \quad  t \in [0 ,\infty).$$
	 Since the random variable $Y(T)-Y(u)$ can be represented as a limit of linear combinations of indicator random variables $\mathds{1}_A$, $A \in \mathcal{F}_{\infty}$, the joint measurability of $ \mathbb{E}_{t}^i[\mathds{1}_A]$ according to Theorem \ref{GeneralizKolmogBackwardEq} yields also the joint measurability of each process $E^i_u$, $u \geq 0$. 
	 The right-continuity of $Y$ and the dominated convergence theorem imply that 
	 $$ \lim_{u \downarrow t} E_u^i(t) = E^i(t).$$ 
	 So, for any sequence  $\mathcal{T}_n $, $n \in \mathbb{N}$, of partitions of $[0,t]$ with vanishing maximum step length for $n$ to infinity, we have
	 \begin{align*}
	 	E^i =  \lim_{n\rightarrow 0} \sum_{\mathcal{T}_n} \mathds{1}_{[t_l,t_{l+1})} E_{t_{l+1}}^i.
	 \end{align*}
	 Because of the latter limit representation, the joint measurability of  the processes $E_u^i$, $u \geq 0$, is inherited by  $E^i$. 	Since  $E_u^{i}$ is $\mathcal{F}^-$-adapted for each $u \in [0,\infty)$, see Theorem  \ref{GeneralizKolmogBackwardEq} and definition \eqref{DefExpectIntTheProof}, 
	 we have that 
	 $E^i(t)= E_t^{i}(t)$ is $\mathcal{F}_{t-}$-measurable for each $t \in [0,\infty)$, which means that $E^i$ is $\mathcal{F}^-$-adapted.    		
	 By using the fact that any random variable can be asymptotically approximated from below by simple random variables, the dominated convergence theorem and Theorem  \ref{GeneralizKolmogBackwardEq}  yield that
	 \begin{align}\label{Generat}\begin{split}
	 		E^i(t)-E^i(s)&= \E_{t}^i[Y(T)-Y(s)] - \E_{s}^i[Y(T)- Y(s)] +\E_{t}^i[Y(s)- Y(t)]\\
	 		&= \sum_{j:j \neq i}\int_{(s,t]}  \big(\E_{u}^j[Y(T)-Y(s)]-\E_{u}^i[Y(T)- Y(s)]\big)\Lambda^{ij}(\d u) + \E_{t}^i[Y(s)- Y(t)],\\
	 		&  \hspace{6cm}  t \in (s,\tau(s)] \cap (0, \rho^i_s), Z(s)=i.
	 \end{split}\end{align}
	 If we read the latter integral as a stochastic process in $t$, then this process has  càdlàg paths of finite variation on each compact interval that contains no reset points of $\Lambda^{i\largecdot}$. On the same intervals, the process $t \mapsto \E_{t}^i[Y(s)- Y(t)]$ has càdlàg paths due to the  dominated convergence theorem, and finite variation due to the  triangle inequality for the conditional expectations and by employing the  representation of $Y$ as a difference of two monotone processes from $\mathbb{Y}^+$.  This proves that  $(E^i)_{i \in \mathcal{Z}} \in \mathcal{Y}(\Lambda)$.
	
	 By applying Proposition \ref{Proposition:CanoncialSpaceProperties}(i) and \eqref{CanonicRepres} and using adaptedness properties, from equation \eqref{Generat} we can conclude that 
	\begin{align*}
		&\mathbb{E}_{t}^i[Y(T)-Y(t)] - \mathbb{E}_{s}^i[Y(T)-Y(s)]  - \int_{(s,t]} I^i(u-) Y^i(\d u)  \\
		&= \sum_{j : j\neq i}\int_{(s,t]} \big( \mathbb{E}_{u}^j[Y(T)-Y(s)]  -	\mathbb{E}_{u}^i[Y(T)-Y(s)]\big) \Lambda^{ij}(\d u),\\
		&= \sum_{j : j\neq i}\int_{(s,t]} \big(Y^j(u) -Y^i(u) + \mathbb{E}_{u}^j[Y(T)-Y(u)]  -	\mathbb{E}_{u}^i[Y(T)-Y(u)]\big) \Lambda^{ij}(\d u),\\
		& \hspace*{10cm} t \in (s, \tau(s)] \cap (s, \rho^i_s), Z(s)=i.
	\end{align*}
	This implies equation \eqref{ThieleEq}, but only  on  compact intervals that contain not reset points of $\Lambda^{i \largecdot}$.  There exists a countable number of these compact intervals that cover the whole set $\{  t \in [0,\infty)  : \Delta \Lambda^{i\largecdot}(t)\geq 0\}$. This verifies \eqref{ThieleEq}.  If  $\Lambda^{ij}=0$ for all $i \in \mathcal{Z}_0$ and $j \not\in \mathcal{Z}_0$ for a subset $\mathcal{Z}_0 \subset \mathcal{Z}$, then the subset of equations \eqref{ThieleEq0}  on $ \mathcal{Z}_0$ depends only on the sub-process $(E^i)_{i  \in \mathcal{Z}_0}$.

	Now we show that (ii) implies (i). Integration by parts yields that 
	\begin{align*}
		&\sum_i I^i(s)E^i(s)  - \sum_i I^i(t)E^i(t) \\
		& = \sum_{i,j:j \neq i} \int_{(s,t]} (E^j(u)-E^i(u)) N^{ij}(\d u) +  \sum_i \int_{(s,t]}  I^i (u- ) E^i(\d u ) 
	\end{align*}
	for  $t \in (s, \tau(s)] \cap (s, \rho^i_s)$, $i \in \mathcal{Z}$.
	We subtract the equation \eqref{ThieleEq} for all $i \in \mathcal{Z}$ and rearrange the terms in order to  arrive at the equation
	\begin{align*}
		&\sum_i I^i(s) E^i(s) - \sum_i I^i(t)E^i(t) \\
		&=  -\sum_i  \int_{(s,t]}  I^i(u-)  B(\d u) + \sum_{j,j:j \neq i} \int_{(s,t]}  (Y^j(u) -Y^i(u)+ E^j(u)-E^i(u)) \big( N^{ij}(\d u) - I^i(u-) \Lambda^{ij}(\d u )\big)
	\end{align*}
	for  $t \in (s, \tau(s)] \cap (s, \rho^i_s)$,  $i \in \mathcal{Z}$. By setting $t =\tau_{n+1}$, taking the expectation $\mathbb{E}_{s}^i[\cdot]$ on both sides, applying Proposition \ref{PropMartingaleProperty}, and using the fact that 
	\begin{align*}
		\mathbb{P}_{s}^i[\tau_{n+1} \geq \rho_{s}^i]&= p_{s}^i(\rho_{s}^i-)=0 \quad  \tau_n \leq s < \tau_{n+1},\zeta_{n}=i,
	\end{align*} 
	see definition  \eqref{DefOfcP}, we obtain
	\begin{align*}
		\mathbb{E}_{s}^i\bigg[\sum_j I^j(s)E^j(s)- \sum_j I^j(\tau_{n+1})E^j(\tau_{n+1}) \bigg]
		&= - \mathbb{E}_{s}^i[  Y(\tau_{n+1}) -Y(s)], \quad \tau_n \leq s < \tau_{n+1},\zeta_{n}=i.
	\end{align*}
	By applying the tower property of conditional expectations according to  Proposition \ref{Proposition:CanoncialSpaceProperties}(b), we moreover  get
	\begin{align*}
		\mathbb{E}_{s}^i\bigg[\sum_j I^j(\tau_n )E^j(\tau_n)-\sum_j I^j(\tau_{n+1})E^j(\tau_{n+1})\bigg] = -  \mathbb{E}_s^i[   Y(\tau_{n+1}) -Y(\tau_n)], \quad s < \tau_n .
	\end{align*}
	All in all, by  using Proposition \ref{Proposition:CanoncialSpaceProperties}(a) and  the assumption $E^j(T)=0$, $j \in \mathcal{Z}$,   the  two latter equations, and the fact that   $\lim_{n \rightarrow \infty }\tau_n = \infty> T$ according to the definition of $\Omega$, we obtain
	\begin{align*}
		E^i(s)  &= 	 \mathbb{E}_{s}^i\bigg[\sum_j I^j(s)E^j(s)- \sum_j I^j(T)E^j(T) \bigg]\\
		&= \mathbb{E}_{s}^i\bigg[ \sum_{l}   \int_{(s ,T]\cap (\tau_l, \tau_{l+1}]}   Y(\d u)\bigg] \\
		& =  \mathbb{E}_{s}^i[Y(T) -Y(s) ]
	\end{align*}
	for all $s \leq T$. 	If  $\Lambda^{ij}=0$ for all $i \in \mathcal{Z}_0$ and $j \not\in \mathcal{Z}_0$ for a subset $\mathcal{Z}_0 \subset \mathcal{Z}$, then
	we necessarily have  $\mathbb{P}_s^{i}[Z(t)=j]=0$,    $i \in \mathcal{Z}_0$, $j \in \mathcal{Z}\setminus\mathcal{Z}_0$, $0 \leq s \leq t$,  according to the definition of $\mathbb{P}_s^{i}$ in  \eqref{pnipnij} and \eqref{DefOfcP}.  Therefore, in case of $i \in \mathcal{Z}_0$, in the latter equation we can restrict the sums over $j$ to the subset $\mathcal{Z}_0$. This means that the $\mathcal{Z}_0$-subset of equations \eqref{KolmBackward2} already suffices to obtain $	E^i(s) = \mathbb{E}_{s}^i[Y(T) -Y(s) ] $, $i \in \mathcal{Z}_0$. 
\end{proof}

\begin{remark}[Martingale representation]
	We consider the martingale
	\begin{align*}
	X(t) =   \E[Y(T)| \mathcal{F}_t], \quad t \in [0,T].
	\end{align*} 
	As we do not use the usual hypotheses for our filtered probability space, we cannot use the standard arguments that ensure that any martingale has a càdlàg modification, but based on our canonical probability kernels we can define a càdlàg modification by
$ X := \sum_i I^i X^i$ for
\begin{align*}
X^i(t):= \mathbb{E}_t^i[Y(T)], \quad t \in [0,T].
\end{align*}
Suppose there are no reset points.  Integration by parts, the relation $X^i=E^i+Y^i$, $i \in \mathcal{Z}$, and~\eqref{ThieleEq0} yield the martingale representation 
	\begin{align*}
		X(t) =  Y(T) - \sum_{i,j : i \neq j} \int_{(t,T]} (X^j(u)-X^i(u)) M^{ij}(\mathrm{d}t), \quad t \in [0,T],
	\end{align*}
	where  $M = (M^{ij})_{i \neq j}$ is the collection of martingales given by
	\begin{align*}
		M^{ij}(\mathrm{d}t) := N^{ij}(\mathrm{d}t) - I^i(t-)\Lambda^{ij}(\mathrm{d}t), \quad t\geq0, 
	\end{align*}
	confer with Proposition~\ref{PropMartingaleProperty}. Unlike what we usually find in the literature, our martingale representation holds surely. In~\cite{Jacobsen2006} a sure martingale representation is also presented, but a càdlàg modification of $X$ is presumed already given, whereas we offer a construction of such a modification.
\end{remark}

\section{Stochastic Thiele equation}\label{sec:thiele}

An insurance cash flow $C$ is a  bidirectional cash flow that gives the difference of the cumulative benefits cash flow and the cumulative premiums cash flow. We generally assume that the cumulative  benefits cash flow and the cumulative premiums cash flow are processes from $\mathbb{Y}^+$, so that $C$ is an element of $\mathbb{Y}$. According to Proposition~\ref{ContainerTheoremE}, the insurance cash flow can be represented as
\begin{align*}
	C(t) = \sum_i \int_{[0,t]} I^i(u-) B^i(\d u) + \sum_{i,j:i \neq j} b^{ij}(u) N^{ij}(\d u), \quad t \geq 0,
	\end{align*}
for $B^i(u)(\omega):=C(u)(\omega_u^i)$ and $b^{ij}(u)(\omega):=C(u)(\omega_u^j)-C(u)(\omega_u^i)$. We call
\begin{align*}
(B,b)= ((B^i)_{i\in \mathcal{Z}}, (b^{ij})_{i,j \in \mathcal{Z}: i\neq j}))
\end{align*}
the  canonical representation of $C$ and interpret $B^i(u)$ as the aggregated sojourn payments on $[0,u]$ in state $i$ and $b^{ij}(u)$ as the transition payments for a transition from $i$ to $j$ at time $u$. 

{The accumulation of wealth is described through a \textbf{cumulative interest rate} $R$, which we assume is an $\mathcal{F}^-$-adapted element of $\mathbb{Y}$ such that $\Delta R > -1$ and such that $\inf_{\omega \in \Omega} R(\omega)$ is bounded on compacts. Based hereon, we define an $\mathcal{F}^-$-adapted càdlàg and strictly positive process $\kappa$ with paths of finite variation on compact intervals and the property that not only $\kappa$ itself but also its reciprocal $1/\kappa$ is bounded on finite intervals via
\begin{align*}
\kappa(t) = e^{R_c(t) - R_c(0)} \prod_{0 < s \leq t} \big(1 + \Delta R(s)\big), \quad t \geq 0,
\end{align*}
where $R_c$ denotes the continuous part of $R$. The interpretation of $\kappa$ is that of a savings account. Note that since the cumulative interest rate $R$ is an $\mathcal{F}^-$-adapted element of $\mathbb{Y}$, specializing Proposition~\ref{ContainerTheoremE} it can be represented as}
\begin{align*}
	R(t) = \sum_i \int_{[0,t]} I^i(u-) \Phi^i(\d u), \quad t \geq 0,
	\end{align*}
for $\Phi^i(u)(\omega):=R(u)(\omega_u^i)$. We call
\begin{align*}
\Phi= ((\Phi^i)_{i\in \mathcal{Z}})
\end{align*}
the  canonical representation of $R$ and interpret $\Phi^i(u)$ as the total sojourn accumulation of wealth on $[0,u]$ in state $i$.

The future discounted liabilities of the insurer at time $t$ and up until some finite horizon $T<\infty$ are given by 
	\begin{align}\label{DiscountedFutureLiabilities}
		L(t,T) := \int_{(t, T]} \frac{\kappa(t)}{\kappa(u)} B(\d u) , \quad 0 \leq t \leq T.
	\end{align}
The-state-wise prospective reserves are introduced in~\cite{Norberg1992}, with 
\begin{align}\label{PRE:DefStateWiseProspRes0}
	\E[ L(t,T) \, | \,Z(t)=i,   \mathcal{F}_{t-}], \quad  t \in [0,T], \, i \in \mathcal{Z},
\end{align}
as the state-wise prospective reserve in state $i$ at time $t$. This definition, however, has its flaws: it is pointwise almost surely only and its path properties are unclear, confer also with \cite{ChristiansenFurrer2021}. This was already noted in~\cite{Norberg1996}, but not rigorously resolved. The problem can be overcome by choosing for~\eqref{PRE:DefStateWiseProspRes0} the unique version
\begin{align}\label{PRE:DefStateWiseProspRes1}
	\E_t^i[ L(t,T) ],\quad  t \in [0,T], \, i \in \mathcal{Z},
\end{align}
which makes the definition of state-wise prospective reserves surely unique and comes with nice path properties.

In the following, we generally suppress the dependence of $L$ and derived quantities on $T$; that is, we write $L(t)$ in place of $L(t,T)$. This is solely to lessen the notational burden.

\begin{definition}\label{def:canonical_insurance_model}
	The tuple  $(\alpha, \Lambda, \Phi, B, b)$, consisting of the generator $(\alpha, \Lambda)$ for  the probability space $(\Omega, \mathcal{F}_{\infty}, \mathbb{P})$, the canonical interest rate representation $\Phi$, and the canonical insurance cash flow representation $(B,b)$ is called a \emph{canonical insurance model}.
\end{definition}
\begin{theorem}[Stochastic Thiele equation]\label{stochThiele} Let a canonical insurance model $(\alpha, \Lambda, \Phi, B, b)$ be given. 
For given stochastic processes $V^i$, $i \in \mathcal{Z}$, the two following statements are equivalent:
	\begin{itemize}
		\item[(a)]  The process  $(V^i)_{i \in \mathcal{Z}}$  satisfies  $V^i(t)= \mathbb{E}_{t}^i[L(t)]$, $t \in [0,T]${.}
		\item[(b)] The process  $(V^i)_{i \in \mathcal{Z}}$ is a solution in  $\mathcal{Y}(\Lambda)$ for the backward equation
		\begin{align}\label{ThieleEq} \begin{split}
				0=\mathds{1}_{\{  \Lambda^{i\largecdot}(t-)< \infty\}} I^i(t-)  \bigg(& V^i(\d t)   +  B^i(\d t) - V^i(t-) \Phi^i (\d t) \\&+ \sum_{j : j\neq i}(b^{ij}(t) + V^j(t)-V^i(t)) \Lambda^{ij}(\d t)\bigg)
		\end{split}\end{align}
		with  terminal value $V^i(T)=0$. 
	\end{itemize}
	If for a  subset $\mathcal{Z}_0 \subset \mathcal{Z}$ we have $\Lambda^{ij}=0$ for all $i \in \mathcal{Z}_0$ and $j \not\in \mathcal{Z}_0$, then the equivalence is also true for the sub-process $(V^i)_{i  \in \mathcal{Z}_0}$ only.	
\end{theorem}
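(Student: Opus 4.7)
The plan is to reduce the claim to Theorem~\ref{DefStatewiseCondExpect0} by discounting and then integration by parts. Let $C$ denote the insurance cash flow with canonical representation $(B,b)$ and define the discounted cash flow
\begin{align*}
\bar C(t) := \int_{(0,t]} \frac{1}{\kappa(u)} C(\d u), \quad t \geq 0.
\end{align*}
Since $C \in \mathbb{Y}$ and $1/\kappa$ is bounded on finite intervals, $\bar C \in \mathbb{Y}$, and~\eqref{DiscountedFutureLiabilities} rewrites as $L(t,T) = \kappa(t) (\bar C(T) - \bar C(t))$. Because $\kappa$ is $\mathcal{F}^-$-adapted, $\kappa(t)$ is $\mathcal{F}_{t-}$-measurable, so Proposition~\ref{Proposition:CanoncialSpaceProperties}(i), extended from indicators to general integrable $\mathcal{F}_{t-}$-measurable factors by linearity and monotone convergence, yields
\begin{align*}
V^i(t) = \kappa(t)\, \bar V^i(t), \qquad \bar V^i(t) := \mathbb{E}_t^i[\bar C(T) - \bar C(t)].
\end{align*}

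Next, I would apply Theorem~\ref{DefStatewiseCondExpect0} with $Y = \bar C$. For this the canonical components $\bar C^i(t)(\omega) := \bar C(t)(\omega_t^i)$ need to be identified. Since $\kappa(u)$ and $C(u)$ are $\mathcal{F}_{u-}$-measurable, respectively $\mathcal{F}_u$-measurable, for $u<t$ they are invariant under $\omega \mapsto \omega_t^i$ on $[0,t)$; at $u=t$ one uses $\kappa(t)(\omega_t^i)=\kappa(t)(\omega)$ together with $C(t)(\omega_t^i) = B^i(t)(\omega)$. A direct computation then gives
\begin{align*}
\bar C^j(t) - \bar C^i(t) = \kappa(t)^{-1}\bigl(B^j(t) - B^i(t)\bigr) = \kappa(t)^{-1} b^{ij}(t),
\end{align*}
and comparing the canonical decomposition of $\bar C$ from Proposition~\ref{ContainerTheoremE} with the decomposition of $\bar C(\d t) = \kappa(t)^{-1} C(\d t)$ induced by $(B,b)$, one extracts the sojourn identity $I^i(t-)\bar C^i(\d t) = I^i(t-)\kappa(t)^{-1} B^i(\d t)$. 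Substituting into~\eqref{ThieleEq0} yields the equivalent discounted backward equation
\begin{align*}
0 = \mathds{1}_{\{\Lambda^{i\largecdot}(t-)<\infty\}}\, I^i(t-)\bigg(\bar V^i(\d t) + \kappa(t)^{-1} B^i(\d t) + \sum_{j:j\neq i}\bigl(\kappa(t)^{-1} b^{ij}(t) + \bar V^j(t) - \bar V^i(t)\bigr)\Lambda^{ij}(\d t)\bigg),
\end{align*}
with $\bar V^i(T) = 0$.

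Finally, I would translate between the discounted equation and~\eqref{ThieleEq} by integration by parts. Since both $\kappa$ and $\bar V^i$ are càdlàg with finite variation on compacts, the product rule reads $V^i(\d t) = \kappa(t)\,\bar V^i(\d t) + \bar V^i(t-)\,\kappa(\d t)$. The defining exponential formula for $\kappa$ gives $\kappa(\d t) = \kappa(t-)\,R(\d t)$, and the canonical decomposition $R(\d t) = \sum_i I^i(t-)\Phi^i(\d t)$ together with $V^i(t-) = \kappa(t-)\bar V^i(t-)$ yields $I^i(t-)\bar V^i(t-)\,\kappa(\d t) = I^i(t-)V^i(t-)\,\Phi^i(\d t)$. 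Solving for $\kappa(t)\bar V^i(\d t)$, multiplying the discounted equation by $\kappa(t)$, and invoking $V^j(t) - V^i(t) = \kappa(t)(\bar V^j(t)-\bar V^i(t))$ (valid because $\kappa(t)$ is state-independent at time $t$) collects precisely into~\eqref{ThieleEq}. All steps are reversible, and the class $\mathcal{Y}(\Lambda)$ is preserved under multiplication and division by the strictly positive, locally bounded process $\kappa$, which gives the equivalence in both directions; the subset statement for $\mathcal{Z}_0 \subset \mathcal{Z}$ is inherited from Theorem~\ref{DefStatewiseCondExpect0}. I expect the main obstacle to be the rigorous identification of $\bar C^i(\d t)$ and $\bar C^j(t) - \bar C^i(t)$, which requires careful handling of the possibly artificial jump introduced by the stopping $\omega \mapsto \omega_t^i$ when $Z(t-) \neq i$.
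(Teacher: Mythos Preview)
Your proposal is correct and follows essentially the same route as the paper: set $\bar V^i(t)=V^i(t)/\kappa(t)$, apply Theorem~\ref{DefStatewiseCondExpect0} to the discounted cash flow $Y=\bar C$ to obtain the discounted backward equation, and convert to~\eqref{ThieleEq} via the integration-by-parts identity $I^i(t-)V^i(\d t)=I^i(t-)\kappa(t)\bar V^i(\d t)+I^i(t-)V^i(t-)\Phi^i(\d t)$, with the $\mathcal{Y}(\Lambda)$ membership preserved by the boundedness of $\kappa$ and $1/\kappa$. Your explicit identification of $\bar C^j(t)-\bar C^i(t)=\kappa(t)^{-1}b^{ij}(t)$ and $I^i(t-)\bar C^i(\d t)=I^i(t-)\kappa(t)^{-1}B^i(\d t)$ is more detailed than the paper (which just writes down the discounted equation), but the flagged obstacle about the artificial jump is harmless since on $\{I^i(t-)=1\}$ the stopped path $\omega_t^i$ is constant in $t$ between jump times.
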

\begin{example}[Markov model]\label{ex:Markov_Thiele}
In continuation of Example~\ref{ex:Markov_KolmogorovBackward}, suppose that $\Lambda$, $\Phi$, $B$, and $b$ are deterministic. In that case, the state-wise prospective reserves are also deterministic, and Equation~\eqref{ThieleEq} reads
\begin{align*}
0=\mathds{1}_{\{  \Lambda^{i\largecdot}(t-)< \infty\}} \bigg( V^i(\d t)   +  B^i(\d t) - V^i(t-) \Phi^i (\d t)+ \sum_{j : j\neq i}(b^{ij}(t) + V^j(t)-V^i(t)) \Lambda^{ij}(\d t)\bigg).
\end{align*}
If there are no reset points, we recover the classic Thiele equation:
\begin{align*}
 V^i(\d t) &=  V^i(t-) \Phi^i (\d t) -  B^i(\d t) - \sum_{j : j\neq i}(b^{ij}(t) + V^j(t)-V^i(t)) \Lambda^{ij}(\d t). \qedhere
\end{align*} 
\end{example}
\begin{proof} 
	Theorem \ref{DefStatewiseCondExpect0} yields for $W^i(t):= V^i(t)/\kappa(t) $ the equivalence of $W^i(t)=\E_t^i[L(t)/\kappa(t)]$ and the  backward equation
	\begin{align*} 
			0=\mathds{1}_{\{  \Lambda^{i\largecdot}(t-)< \infty\}} I^i(t-)  \bigg( W^i(\d t)   + \frac{1}{\kappa(t)} B^i(\d t) + \sum_{j : j\neq i}\Big(\frac{1}{\kappa(t)}b^{ij}(t)+ W^j(t)-W^i(t)\Big) \Lambda^{ij}(\d t)\bigg).
	\end{align*}
	Integration by parts yields
	\begin{align*}
		   I^i(t-)  V^i(\d t)
		   &=
		   I^i(t-) \kappa(t) W^i(\d t)  + I^i(t-) W^i(t-)  \kappa(\d t) \\
		   &=
		   I^i(t-) \kappa(t) W^i(\d t)  + I^i(t-) V^i(t-)  \Phi^i(\d t),
	\end{align*}
	and by multiplying this equation with $1/\kappa(t)$ based on the Radon-Nikodym theorem, we obtain
	\begin{align*}
		 I^i(t-) W^i(\d t)   =   I^i(t-) \frac{1}{\kappa(t)} V^i(\d t) - I^i(t-) \frac{1}{\kappa(t)} V^i(t-) \Phi^i (\d t).
	\end{align*}
	Therefore, the backward equation for $W^i$ is equivalent to~\eqref{ThieleEq}. This proves the equivalence of (a) and (b), since the transformation  $W^i(t):= V^i(t)/\kappa(t) $, $i \in \mathcal{Z}$, stays in $\mathcal{Y}(\Lambda)$ due to the boundedness assumptions for $\kappa$ and $1/\kappa$ on $[0,T]$. 
\end{proof}
\begin{remark}[Backward stochastic differential equation]
Suppose there are no reset points, and let $V := \sum_i I^i V^i$. Integration by parts and~\eqref{ThieleEq} then yield the backward stochastic (differential) equation
\begin{align*}
V(\mathrm{d}t) = V(t-)\Phi(\mathrm{d}t) - C(\mathrm{d}t) + \sum_{i,j : i \neq j} (b^{ij}(t) + V^j(t)-V^i(t)) M^{ij}(\mathrm{d}t), \quad V(T) = 0,
\end{align*}
where $M = (M^{ij})_{i \neq j}$ is the collection of martingales given by
\begin{align*}
M^{ij}(\mathrm{d}t) := N^{ij}(\mathrm{d}t) - I^i(t-)\Lambda^{ij}(\mathrm{d}t), \quad t\geq0, 
\end{align*}
confer with Proposition~\ref{PropMartingaleProperty}. In the absolutely continuous case, this can be compared with Equations~(3.4)--(3.5) in~\cite{ChristiansenDjehiche2020}, the main difference being that our equation holds surely.
\end{remark}

\section{Comparison theorems}\label{sec:comparison}

We say that   $\Lambda$ and $\bar \Lambda$ have  identical reset points if
\begin{align*}
\{(t,\omega,i) : \Lambda^{i\largecdot}(t-)(\omega)=\infty \}= \{(t,\omega,i): \bar\Lambda^{i\largecdot}(t-)(\omega)=\infty \}.
\end{align*}
\begin{corollary}[Stochastic Cantelli theorem]
	Let two canonical insurance models $(\alpha, \Lambda, \Phi, B, b)$ and $(\bar \alpha, \bar \Lambda, \Phi, \bar B, \bar b)$  be given. Let   $\Lambda$ and $\bar \Lambda$ have  identical reset points. Suppose that the state-wise prospective reserves $(V^i)$ of the first model satisfy
	\begin{align}\label{eq:cantelli_nonMarkov}
	\begin{split}
			&\mathds{1}_{\{  \bar \Lambda^{i\largecdot}(t-)<\infty \}} I^i(t-)  \bigg(  B^i(\d t)+ \sum_{j : j\neq i}(b^{ij}(t) + V^j(t)-V^i(t)) \Lambda^{ij}(\d t)\bigg)\\
			& =\mathds{1}_{\{  \bar \Lambda^{i\largecdot}(t-)<\infty \}} I^i(t-)  \bigg(  \bar B^i(\d t)+ \sum_{j : j\neq i}(\bar b^{ij}(t) + V^j(t)-V^i(t)) \bar \Lambda^{ij}(\d t)\bigg).
			\end{split}
	\end{align}
	Then the two models have  identical state-wise prospective reserves:
	 $$(V^i)_{i \in \mathcal{Z}}=(\bar V^i)_{i \in \mathcal{Z}}.$$
\end{corollary}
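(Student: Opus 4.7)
The plan is to reduce the claim to the uniqueness statement built into the stochastic Thiele equation, Theorem~\ref{stochThiele}, applied to the second canonical insurance model. By Theorem~\ref{stochThiele} applied to the first model, the state-wise prospective reserves $(V^i)$ satisfy
\begin{align*}
0=\mathds{1}_{\{  \Lambda^{i\largecdot}(t-)< \infty\}} I^i(t-)  \bigg( V^i(\d t)   +  B^i(\d t) - V^i(t-) \Phi^i (\d t) + \sum_{j : j\neq i}(b^{ij}(t) + V^j(t)-V^i(t)) \Lambda^{ij}(\d t)\bigg)
\end{align*}
with terminal condition $V^i(T)=0$, and they lie in $\mathcal{Y}(\Lambda)$. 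First I would observe that, since $\Lambda$ and $\bar\Lambda$ have identical reset points, the indicator processes $\mathds{1}_{\{\Lambda^{i\largecdot}(t-)<\infty\}}$ and $\mathds{1}_{\{\bar\Lambda^{i\largecdot}(t-)<\infty\}}$ coincide on $[0,\infty)\times\Omega\times\mathcal{Z}$, and consequently $\mathcal{Y}(\Lambda)=\mathcal{Y}(\bar\Lambda)$, since $\mathcal{Y}(\cdot)$ depends on its argument only through the location of the reset points.

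Next, I would multiply the Thiele equation above by the (now redundant) indicator $\mathds{1}_{\{\bar\Lambda^{i\largecdot}(t-)<\infty\}}$ and invoke the standing hypothesis~\eqref{eq:cantelli_nonMarkov}, which precisely asserts that the combined payment and transition intensity terms involving $(B^i,b^{ij},\Lambda^{ij})$ may be substituted by those involving $(\bar B^i,\bar b^{ij},\bar\Lambda^{ij})$ on the support of this indicator. The terms $V^i(\d t)$ and $-V^i(t-)\Phi^i(\d t)$ are untouched by the substitution, so we obtain
\begin{align*}
0=\mathds{1}_{\{  \bar\Lambda^{i\largecdot}(t-)< \infty\}} I^i(t-)  \bigg( V^i(\d t)   +  \bar B^i(\d t) - V^i(t-) \Phi^i (\d t) + \sum_{j : j\neq i}(\bar b^{ij}(t) + V^j(t)-V^i(t)) \bar \Lambda^{ij}(\d t)\bigg).
\end{align*}
This is precisely the stochastic Thiele equation of the second canonical insurance model with $(V^i)$ inserted as the candidate solution, still with terminal value $V^i(T)=0$ and still with $(V^i)\in\mathcal{Y}(\bar\Lambda)$.

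Applying Theorem~\ref{stochThiele} in the reverse direction to the second canonical insurance model then identifies $(V^i)$ with the state-wise prospective reserves of the second model, so that $V^i=\bar V^i$ for every $i\in\mathcal{Z}$. The main, and essentially only, obstacle is ensuring that the two indicator sets coincide and that $\mathcal{Y}(\Lambda)=\mathcal{Y}(\bar\Lambda)$; both are immediate consequences of the identical reset point assumption, after which all remaining content is encapsulated in Theorem~\ref{stochThiele}.
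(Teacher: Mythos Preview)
Your proposal is correct and follows essentially the same approach as the paper's proof: use the identical reset point assumption to conclude $\mathcal{Y}(\Lambda)=\mathcal{Y}(\bar\Lambda)$, then substitute via the hypothesis to recognize that $(V^i)$ solves the stochastic Thiele equation of the second model, and finally invoke the uniqueness part of Theorem~\ref{stochThiele}. The paper's proof is simply a more compressed version of the same argument.
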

\begin{remark}
In line with actuarial tradition, see also below, Corollary~8.1 is stated for non-differing interest rates. If instead $\bar \Phi \neq \Phi$, we could replace~\eqref{eq:cantelli_nonMarkov} by
\begin{align*}
			&\mathds{1}_{\{  \bar \Lambda^{i\largecdot}(t-)<\infty \}} I^i(t-)  \bigg(B^i(\d t) -V^i(t-)\Phi^i(\d t) + \sum_{j : j\neq i}(b^{ij}(t) + V^j(t)-V^i(t)) \Lambda^{ij}(\d t)\bigg)\\
			& =\mathds{1}_{\{  \bar \Lambda^{i\largecdot}(t-)<\infty \}} I^i(t-)  \bigg(\bar B^i(\d t) -V^i(t-)\bar \Phi^i(\d t) + \sum_{j : j\neq i}(\bar b^{ij}(t) + V^j(t)-V^i(t)) \bar \Lambda^{ij}(\d t)\bigg)
\end{align*}
and still obtain the same conclusion.
\end{remark}
\begin{example}[Markov model]
In continuation of Example~\ref{ex:Markov_Thiele}, suppose that $\Lambda$, $\Phi$, $B$, and $b$ as well as $\bar\Lambda$, $\bar B$, and $\bar b$ are deterministic. In that case, the state-wise prospective reserves are also deterministic, and Equation~\ref{eq:cantelli_nonMarkov} reads
\begin{align*}
			&\mathds{1}_{\{  \bar \Lambda^{i\largecdot}(t-)<\infty \}} \bigg(  B^i(\d t)+ \sum_{j : j\neq i}(b^{ij}(t) + V^j(t)-V^i(t)) \Lambda^{ij}(\d t)\bigg)\\
			& =\mathds{1}_{\{  \bar \Lambda^{i\largecdot}(t-)<\infty \}} \bigg(  \bar B^i(\d t)+ \sum_{j : j\neq i}(\bar b^{ij}(t) + V^j(t)-V^i(t)) \bar \Lambda^{ij}(\d t)\bigg).
\end{align*}
If there are no reset points, this reduces to
\begin{align*}
B^i(\d t)+ \sum_{j : j\neq i}(b^{ij}(t) + V^j(t)-V^i(t)) \Lambda^{ij}(\d t)
=
\bar B^i(\d t)+ \sum_{j : j\neq i}(\bar b^{ij}(t) + V^j(t)-V^i(t)) \bar \Lambda^{ij}(\d t),
\end{align*}
compare with Theorem~6.2 in~\cite{MilbrodtStracke1997} and Satz~10.29 in\linebreak \cite{MilbrodtHelbig2008}.
\end{example}
\begin{proof}
 The assumption that  $\Lambda$ and $\bar \Lambda$ have  identical reset points implies that $\mathcal{Y}(\Lambda)= \mathcal{Y}(\bar\Lambda)$.  The equation for  $(V^i)$ implies that $(V^i)_{i \in \mathcal{Z}}$ solves the stochastic Thiele equation of $(\bar V^i)_{i \in \mathcal{Z}}$, so that Theorem  \ref{stochThiele} yields that $V^i(t)= \bar{\E}_t^i [\bar L(t)] = \bar V^i(t)$ for all $t \geq 0$, $i \in \mathcal{Z}$. 
\end{proof}
The next result provides sufficient conditions on the transition and interest rates to ensure that the state-wise prospective reserves are on the safe side. Safe-side calculations have hitherto been explored in survival models~\cite{Lidstone1905,Norberg1985}, Markov models~\cite{Hoem1988,RamlauHansen1988,Linnemann1993}, and semi-Markov models~\cite{Niemeyer2015}.
\begin{theorem} Let  $(V^i)$ and  $(\bar V^i)$  be the state-wise prospective reserves of canonical insurance models $(\alpha, \Lambda, \Phi, B, b)$ and $(\bar \alpha, \bar \Lambda, \bar \Phi, B, b)$, where on each compact interval the differences {$\bar\Lambda - \Lambda$ shall} have finite variation. Let $ V :=\sum_i I^i V^i $, and let $R^{ij} := b^{ij} + V^j - V^i$.
\begin{itemize}
	\item[(a)]  \textbf{Pessimistic actuarial  basis:} If the differences $\bar\Lambda - \Lambda$ and $\bar\Phi - \Phi$ satisfy  
	\begin{align*}
		\mathds{1}_{\{ R^{ij}(t) \geq 0\}} \big( \bar \Lambda^{ij}-\Lambda^{ij}\big) (\d t) &\geq 	0,\\
		\mathds{1}_{\{ R^{ij}(t) \leq 0\}}  \big( \bar \Lambda^{ij}-\Lambda^{ij}\big) (\d t)&\leq 	0,\\
		\mathds{1}_{\{ V^i(t) \geq 0\}}  \big( \bar \Phi^i  -\Phi^i  \big)(\d t) &\leq 	0,\\
		\mathds{1}_{\{V^i(t) \leq 0\}}  \big( \bar \Phi^i  -\Phi^i  \big) (\d t)&\geq 	0
	\end{align*}
	 for    $t \geq 0$, $i,j \in \mathcal{Z}$, $i\neq j$, then it  holds that 
	$$ \bar V^i(t) \geq  V^i(t),  \quad t \geq 0, \, i \in \mathcal{Z}.$$ 
		
	\item[(b)] \textbf{Optimistic actuarial basis:} If the differences $\bar\Lambda - \Lambda$ and $\bar\Phi - \Phi$ satisfy 
\begin{align*}
	\mathds{1}_{\{ R^{ij}(t) \geq 0\}} \big( \bar \Lambda^{ij}-\Lambda^{ij}\big)(\d t) &\leq 	0,\\
	\mathds{1}_{\{ R^{ij}(t) \leq 0\}}  \big( \bar \Lambda^{ij}-\Lambda^{ij}\big)(\d t) &\geq 	0,\\
	\mathds{1}_{\{ V^i(t) \geq 0\}}  \big( \bar \Phi^i -\Phi^i  \big)(\d t) &\geq 	0,\\
	\mathds{1}_{\{V^i(t) \leq 0\}}  \big( \bar \Phi^i  -\Phi^i  \big)(\d t)  &\leq 	0
\end{align*}
for 	 $t \geq 0$, $i,j \in \mathcal{Z}$, $i\neq j$, then it holds that 
$$ \bar V^i(t) \leq  V^i(t),  \quad t \geq 0, \, i \in \mathcal{Z}.$$  	
\end{itemize}	
\end{theorem}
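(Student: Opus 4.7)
The strategy is to derive a stochastic Thiele equation for the difference $\Delta^i := \bar V^i - V^i$ under the \emph{barred} canonical model, and then to read off the sign of $\Delta^i$ from the resulting prospective-reserve representation.

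First, I would subtract the stochastic Thiele equation for $V^i$ from the one for $\bar V^i$; both are available via Theorem~\ref{stochThiele} and share the payment data $(B,b)$. Writing $\bar R^{ij} = R^{ij} + \Delta^j - \Delta^i$ and using the telescoping identity
\begin{align*}
\bar V^i(t-) \bar\Phi^i(\d t) - V^i(t-) \Phi^i(\d t) = \Delta^i(t-) \bar\Phi^i(\d t) + V^i(t-) (\bar\Phi^i - \Phi^i)(\d t),
\end{align*}
the difference of the two Thiele equations collapses to
\begin{align*}
0 = \mathds{1}_{\{\bar\Lambda^{i\largecdot}(t-)<\infty\}} I^i(t-) \bigg( \Delta^i(\d t) + \tilde B^i(\d t) - \Delta^i(t-) \bar\Phi^i(\d t) + \sum_{j:j\neq i}(\Delta^j(t) - \Delta^i(t)) \bar\Lambda^{ij}(\d t)\bigg),
\end{align*}
where
\begin{align*}
\tilde B^i(\d t) := -V^i(t-)(\bar\Phi^i - \Phi^i)(\d t) + \sum_{j:j\neq i} R^{ij}(t)(\bar\Lambda^{ij} - \Lambda^{ij})(\d t),
\end{align*}
with terminal condition $\Delta^i(T) = 0$. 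The finite-variation hypothesis on $\bar\Lambda - \Lambda$ forces $\bar\Lambda$ and $\Lambda$ to share reset points (a pole present in only one would destroy finite variation of the difference), so $V^i \in \mathcal{Y}(\Lambda) = \mathcal{Y}(\bar\Lambda)$. Combined with boundedness of $V^i$ and $R^{ij}$ on $[0,T]$, this places $\tilde B$ in $\mathbb{Y}$ and $(\Delta^i)_{i\in\mathcal{Z}}$ in $\mathcal{Y}(\bar\Lambda)$, so the above display really is a stochastic Thiele equation in the sense of Theorem~\ref{stochThiele}.

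Second, I would apply Theorem~\ref{stochThiele} to the auxiliary canonical insurance model $(\bar\alpha, \bar\Lambda, \bar\Phi, \tilde B, 0)$ with zero transition payments, obtaining the representation
\begin{align*}
\Delta^i(t) = \bar{\mathbb{E}}_t^i \bigg[ \int_{(t,T]} \frac{\bar\kappa(t)}{\bar\kappa(u)} \sum_{k\in\mathcal{Z}} I^k(u-) \tilde B^k(\d u) \bigg].
\end{align*}
Since $\bar\kappa>0$ and conditional expectation under $\bar{\mathbb{P}}_t^i$ is monotone, the sign of $\Delta^i(t)$ is controlled by the sign of $\tilde B^k$. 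In case (a), the four pointwise hypotheses combine to $-V^k(t-)(\bar\Phi^k - \Phi^k)(\d t) \geq 0$ and $R^{kj}(t)(\bar\Lambda^{kj} - \Lambda^{kj})(\d t) \geq 0$, hence $\tilde B^k(\d t) \geq 0$ and $\Delta^i(t) \geq 0$; case (b) follows symmetrically.

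The main technical obstacle is the left-limit mismatch in the sign analysis: the pessimistic/optimistic hypotheses are formulated with $V^k(t)$ inside the indicator, whereas $\tilde B^k$ naturally carries $V^k(t-)$. Since $V^k$ is càdlàg, $V^k(t) = V^k(t-)$ off an at most countable set, but the measures $\bar\Phi^k - \Phi^k$ and $\bar\Lambda^{kj} - \Lambda^{kj}$ can place atoms exactly at jumps of $V^k$. I would close this gap by inspecting those atoms individually via the stochastic Thiele equation, which determines $\Delta V^k(t)$ explicitly in terms of $\Delta B^k(t)$, $\Delta \Phi^k(t)$, and $\Delta \Lambda^{kj}(t)$, and verifying that the relevant inequality is preserved through the jump; alternatively, the assumption may be read as applying to both $V^k(t)$ and $V^k(t-)$ simultaneously, which is the intended actuarial reading. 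The analogous question for $R^{ij}$ does not arise since the integrand already carries the right-continuous version. Finally, residual care at reset points is handled exactly as in the proof of Theorem~\ref{stochThiele}: the reset set is a finite union of points in every finite interval and is suppressed by the indicator $\mathds{1}_{\{\bar\Lambda^{i\largecdot}(t-)<\infty\}}$.
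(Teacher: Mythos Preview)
Your approach is essentially the same as the paper's: form the difference of the two stochastic Thiele equations, recognize it as a Thiele equation for $(\Delta^i)_{i\in\mathcal{Z}}$ under the barred transition rates, invoke Theorem~\ref{stochThiele} to obtain a prospective-reserve representation, and conclude by checking the sign of the auxiliary sojourn payment. The only cosmetic difference is that the paper works with $W^i=V^i-\bar V^i=-\Delta^i$ and keeps $\Phi$ (rather than $\bar\Phi$) as the interest rate in the auxiliary model, absorbing the extra $(\bar\Phi^i-\Phi^i)$-term into $A^i$ instead; either decomposition yields the same sign analysis. Your observation about the $V^i(t)$ versus $V^i(t-)$ mismatch in the interest term is a genuine subtlety that the paper's own proof passes over silently, so you are in fact being more careful here than the original.
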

\begin{remark}
Loosely speaking, the differences $\bar\Lambda - \Lambda$ have finite variation on compact intervals if the transition rates not only have identical reset points, but if also their difference locally around each reset point is {bounded}.
\end{remark}
\begin{proof}
	Let $W^i:=  V^i - \bar V^i$, $i \in \mathcal{Z}$. 
	By taking the difference of the stochastic Thiele equations of $\bar V^i$ and $ V^i$, we obtain
	\begin{align*}
		0=\mathds{1}_{\{   \bar \Lambda^{i\largecdot}(t-)<\infty \}} I^i(t-)\Big(&W^i(\d t )  - W^i(t) \bar \Phi^i(\d t)+  \bar V^i(t-) (\bar\Phi^i -\Phi^i) (\d t) \\ &+ \sum_{j:j \neq i} ( W^j(t) - W^i(t)) \bar\Lambda^{ij}(\d t) 
		+ \sum_{j:j \neq i}  R^{ij}(t) ( \Lambda^{ij}-\bar \Lambda^{ij})(\d t) \Big),
	\end{align*}
	using the fact that $\Lambda$ and $\bar \Lambda$ have identical reset points. We can rewrite the latter equation as
	\begin{align*}
		0= \mathds{1}_{\{  \bar \Lambda^{i\largecdot}(t-)<\infty \}} I^i(t-)\Big(W^i(\d t )  + A^i(\d t)- W^i(t) \Phi^i(\d t) + \sum_{j:j \neq i} ( W^j(t) - W^i(t)) \bar\Lambda^{ij}(\d t), 
		\Big)
	\end{align*}
		for 
		$$ A^i(t):=   V^i(t-) (\bar\Phi^i - \Phi^i) (\d t) + \sum_{j:j \neq i}  R^{ij}(t) ( \Lambda^{ij}- \bar\Lambda^{ij})(\d t).$$
	which, according to Theorem~\ref{stochThiele},  has the unique solution 
	\begin{align*}
		W^i(t) =  \bar{\mathbb{E}}_t^i\bigg[ \sum_j \int_{(t,T]} \frac{\kappa (t)}{\kappa (u)} I^j(u) A^j (\d u) \bigg].
	\end{align*}
	Note  here that $C(t):= \sum_i I^i(t-)A^i(\d t)$ is an element of $\mathbb{Y}$, since $ V^i$ and $R^{ij}$ are bounded on finite intervals, see Theorem~\ref{stochThiele}(b), and since the variations of $ \bar \Phi^i - \Phi^i$ and $ \Lambda^{ij}- \bar \Lambda^{ij}$ are bounded on finite intervals by assumption. 

	Under the assumptions in part (a) of the theorem, we have 
	$$\sum_j \int_{(t,T]} \frac{\kappa (t)}{\kappa (u)} I^j(u) A^j (\d u) \geq 0,$$
	so that $	W^i(t)\geq 0$, which means that  $\bar V^i(t) \geq V^i(t)$. This argument applies for any $t \in [0,T]$ and $i \in \mathcal{Z}$. For $t >T$, both prospective reserves are simply zero because of $L(t)=0$, so that the inequality still applies.  This verifies statement (a). The proof of statement (b) is analogous.
\end{proof}
Invariances of state-wise prospective reserves w.r.t.\ changes in the insurance model are important for various reasons. This includes the development of efficient computational schemes, but perhaps plays an even more fundamental role in resolving circular model definitions arising from the specification of cash flows in terms of state-wise prospective reserves. Implicit policyholder options such as the free-policy option or the surrender (lapse) option are typical features in the design of life insurance products~\cite{Gatzert2009} that lead to such reserve dependence. 

The next result provides a collection of important invariances related to, among other things, linearly reserve-dependent and shortened cash flows. Even for Markov models, such invariances are part of the actuarial folklore and rarely given a rigorous treatment; linearly reserve-dependent and scaled cash flows constitute an exception, confer with~\cite{ChristiansenDenuitDhaene2014} and~\cite{Furrer2022}, respectively.
\begin{theorem}\label{CompTheorem1}
Let  $(V^i)_{i\in \mathcal{Z}}$ be the state-wise prospective reserves of  a canonical insurance model $(\alpha, \Lambda, \Phi, B, b)$.  Let  $\mathcal{Z}_0 \subset \mathcal{Z}$  and $\mathcal{Z}_1 = \mathcal{Z} \setminus \mathcal{Z}_0$ be any decomposition of the state space. 
	\begin{itemize}
		\item[(a)] \textbf{Irrelevant initial distribution:}   $(V^i)_{i\in \mathcal{Z}}$ is invariant with respect to  $\alpha$. 
		\item[(b)] \textbf{Irrelevant states:} Suppose that $\Lambda^{ij}=0$ for all $i \in \mathcal{Z}_0$, $j \in \mathcal{Z}_1$. Then $(V^i)_{i\in \mathcal{Z}_0}$ is invariant with respect to $b^{ij}$, $i \in \mathcal{Z}_0$, $j \in  \mathcal{Z}_1$, and $B^i$, $b^{ij}$, $\Lambda^{ij}$, $i \in \mathcal{Z}_1$, $j \in  \mathcal{Z}$, $i \neq j$.
	\item[(c)] \textbf{Irrelevant transition rate:} Suppose that  $b^{kl}+V^k-V^l=0$. Then $(V^i)_{i\in \mathcal{Z}}$  is invariant with respect to any changes of  $\Lambda^{kl}$  that preserve the reset points of $\Lambda^{k \largecdot}$. 
	\item[(d)]  \textbf{Shortened cash-flow:} Suppose that $\Lambda^{ji}=0$ for all $i \in \mathcal{Z}_0$, $j \in \mathcal{Z}_1$. 
	 Then $(V^i)_{i\in \mathcal{Z}_0}$ is invariant with respect to the modification 
	 \begin{align*}
	 	\bar b^{ij} &= b^{ij}+V^j,\quad i \in \mathcal{Z}_0,\, j \in \mathcal{Z}_1,\\
	 	\bar B^j &=0, \quad  j \in \mathcal{Z}_1, \\
	 	 \bar b^{ij} &= 0, \quad  i,j \in \mathcal{Z}_1, \, i \neq j .
	 \end{align*}
	\item[(e)]  \textbf{Cemeteries:} Suppose that $\Lambda^{ji}=0$, $b^{ij} = 0$, $B^j = 0$, and $b^{jk} = 0$ for all $i\in\mathcal{Z}_0$, $j,k \in \mathcal{Z}_1$, $k \neq j$. Further, suppose that $\Delta \Lambda^{i\ell} \leq 0$ for all $i,\ell \in \mathcal{Z}_0$, $\ell \neq i$, and $\Delta \Lambda^{ij} \geq 0$ for all $i\in\mathcal{Z}_0$, $j \in \mathcal{Z}_1$. Then $(V^i)_{i\in\mathcal{Z}_0}$ is invariant with respect to the changes {
	\begin{align*}
	\bar{\Phi}^i(\mathrm{d}t) &= \Phi^i(\mathrm{d}t) + \big(1+\Delta R^i(t)\big) \sum_{j \in \mathcal{Z}_1}\Lambda^{ij}(\mathrm{d}t), \quad  i \in \mathcal{Z}_0,\\
	\bar{\Lambda}^{ij} &= 0, \quad j\in\mathcal{Z}_1, \quad  i \in \mathcal{Z}_0,\\
	\bar{B}^i(\mathrm{d}t) &= B^i(\mathrm{d}t) - \Delta B^i(t)  \sum_{j \in \mathcal{Z}_1} \Lambda^{ij}(\mathrm{d}t), \quad  i \in \mathcal{Z}_0.
	\end{align*}}
	 		\item[(f)] \textbf{Reserve-dependent payments:}  {Suppose for $k,l \in \mathcal{Z}$, $k \neq l$, that
		 	\begin{align*} 
			b^{kl}&=a_0+ a_1 (V^k -V ^l), \\
			B^k(\d t) &= A_0(\d t ) +  V^k(t-) A_1(\d t)
			\end{align*}
			 for $\mathcal{F}^-$-adapted, jointly measurable processes $a_0, a_1, A_0,A_1$ such that
			 \begin{itemize}
			 \item $0 \leq a_1 \leq c_1 < 1$ and $0 \leq a_0 \leq c_2$ for deterministic constants $c_1$ and $c_2$,
			 \item $A_0,A_1 \in \mathbb{Y}$, $\Delta (\Phi - A_1) > -1$, and $\inf_{\omega \in \Omega} \big(R(\omega)-A_1(\omega)\big)$ is bounded on compacts.
			 \end{itemize}
			 Then $(V^i)_{i\in \mathcal{Z}}$  is invariant with respect to the changes
			 \begin{align*} 
			 \bar b^{kl}&=a_0/(1-a_1),\\
			 \bar B^k &= A_0,\\
			 \bar \Lambda^{kl}(\d t) &=(1-a_1(t)) \Lambda^{kl}(\d t),\\
			  \bar \Phi^k &=  \Phi^k - A_1. 
			\end{align*}}
	\end{itemize}
\end{theorem}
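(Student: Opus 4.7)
My plan is to deduce every part from the uniqueness clause of Theorem~\ref{stochThiele}: given the modified model $(\bar\alpha,\bar\Lambda,\bar\Phi,\bar B,\bar b)$, I build a candidate $(\bar V^i)$ out of the original reserves $(V^i)$, check that it lies in $\mathcal{Y}(\bar\Lambda)$, verify that it solves the modified backward equation, and conclude $\bar V^i=V^i$ on the relevant states. Additional bookkeeping is done by the sub-process clause of Theorem~\ref{stochThiele} whenever $\Lambda^{ij}=0$ for $i\in\mathcal{Z}_0,j\in\mathcal{Z}_1$.

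Parts (a)--(d) are essentially immediate. For (a), the canonical kernels $\mathbb{P}_s^i$ from Theorem~\ref{TheoremCanoncialSpace} are constructed from $\Lambda$ alone, so $V^i(t)=\mathbb{E}_t^i[L(t)]$ does not depend on $\alpha$. For (b), the sub-process clause restricts the characterization of $(V^i)_{i\in\mathcal{Z}_0}$ to the equations indexed by $\mathcal{Z}_0$, in which the $b^{ij}$ for $j\in\mathcal{Z}_1$ appear only as coefficients of $\Lambda^{ij}=0$ and the parameters $B^i,b^{ij},\Lambda^{ij}$ for $i\in\mathcal{Z}_1$ do not appear at all. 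For (c), the rate $\Lambda^{kl}$ enters only the $V^k$-equation, multiplied by the bracket that vanishes by the assumed identity; preservation of the reset points of $\Lambda^{k\largecdot}$ gives $\mathcal{Y}(\Lambda)=\mathcal{Y}(\bar\Lambda)$, so $(V^i)$ remains a solution of the modified system. Part (d) uses the candidate $\bar V^j=0$ for $j\in\mathcal{Z}_1$ (solving the cemetery Thiele equation trivially because $\bar B^j=\bar b^{jk}=0$ and $\Lambda^{ji}=0$ out of $\mathcal{Z}_1$) and $\bar V^i=V^i$ for $i\in\mathcal{Z}_0$; the substitution $\bar b^{ij}=b^{ij}+V^j$ in $(\bar b^{ij}+\bar V^j-\bar V^i)\Lambda^{ij}=(b^{ij}+V^j-V^i)\Lambda^{ij}$ reproduces the original $\mathcal{Z}_0$-equation term by term.

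Part (f) is a short algebraic verification. Starting from $b^{kl}=a_0+a_1(V^k-V^l)$ one obtains
\begin{align*}
(b^{kl}+V^l-V^k)\Lambda^{kl}(\d t)=\bigl(a_0-(1-a_1)(V^k-V^l)\bigr)\Lambda^{kl}(\d t)=\bigl(\bar b^{kl}+V^l-V^k\bigr)\bar\Lambda^{kl}(\d t),
\end{align*}
and analogously $B^k(\d t)-V^k(t-)\Phi^k(\d t)=A_0(\d t)-V^k(t-)(\Phi^k-A_1)(\d t)=\bar B^k(\d t)-V^k(t-)\bar\Phi^k(\d t)$, so $(V^i)$ solves the modified Thiele equation term by term. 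The integrability and boundedness conditions on $(a_0,a_1,A_0,A_1)$, notably $a_1\leq c_1<1$ and $\Delta(\Phi-A_1)>-1$, are exactly what certifies that $(\bar\alpha,\bar\Lambda,\bar\Phi,\bar B,\bar b)$ is a valid canonical insurance model and that $(V^i)\in\mathcal{Y}(\bar\Lambda)$.

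The principal obstacle is part (e). Setting $\bar V^j=0$ for $j\in\mathcal{Z}_1$ (unchanged from the original, since $\bar B^j=\bar b^{jk}=0$ and the cemetery's Thiele equation is not touched) and $\bar V^i=V^i$ for $i\in\mathcal{Z}_0$, the difference between the modified and original Thiele equations on $\mathcal{Z}_0$ reduces, after using $b^{ij}=0=V^j$ for $j\in\mathcal{Z}_1$, to a residual measure proportional to
\begin{align*}
\sum_{j\in\mathcal{Z}_1}\Lambda^{ij}(\d t)\bigl(\Delta V^i(t)-V^i(t-)\Delta R^i(t)-\Delta B^i(t)\bigr).
\end{align*}
This vanishes against the continuous part of $\sum_j\Lambda^{ij}$ because the atomic quantities $\Delta V^i,\Delta R^i,\Delta B^i$ live on a continuous-null set. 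For the atomic part, the hypothesis $\Delta\Lambda^{i\ell}\leq 0$ for $\ell\in\mathcal{Z}_0,\ell\neq i$ combined with the monotonicity of $\Lambda$ forces $\Delta\Lambda^{i\ell}=0$ at non-reset points, so only the cemetery transitions contribute atoms and the original atomic Thiele equation becomes the clean recursion $V^i(s)\bigl(1-\sum_{j\in\mathcal{Z}_1}\Delta\Lambda^{ij}(s)\bigr)=V^i(s-)(1+\Delta\Phi^i(s))-\Delta B^i(s)$; the $(1+\Delta R^i(t))$ factor in $\bar\Phi^i$ and the $-\Delta B^i(t)\sum_{j\in\mathcal{Z}_1}\Lambda^{ij}(\d t)$ correction in $\bar B^i$ are exactly the compounding corrections designed to match this original recursion against the modified one $\bar V^i(s)=\bar V^i(s-)(1+\Delta\bar\Phi^i(s))-\Delta\bar B^i(s)$ with $\bar V^i=V^i$. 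Carrying out this atomic bookkeeping carefully, and verifying that membership $(V^i)\in\mathcal{Y}(\bar\Lambda)$ is preserved under the modification, is the delicate step.
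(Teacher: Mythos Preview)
Your approach is essentially the same as the paper's: every part is reduced to the uniqueness clause of Theorem~\ref{stochThiele} by checking that the original reserves (or $(V^i)_{i\in\mathcal{Z}_0}$ completed by zeros on $\mathcal{Z}_1$) solve the modified stochastic Thiele equation. The paper's proofs of (a)--(d) and (f) are virtually one-liners in the same spirit as yours; for (e) the paper compresses your continuous/atomic split into the single pointwise identity $\mathds{1}_{\{\Lambda^{i\largecdot}(t-)<\infty\}}I^i(t-)V^i(t)=\mathds{1}_{\{\Lambda^{i\largecdot}(t-)<\infty\}}I^i(t-)\big((1+\Delta R^i(t))V^i(t-)+\Delta B^i(t)\big)$ and then asserts that the described changes preserve the equation, whereas you spell out that $\Delta\Lambda^{i\ell}=0$ on $\mathcal{Z}_0$ at non-reset points and identify the residual explicitly. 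Your additional detail is a correct elaboration of the same argument, not a different route.
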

\begin{proof}\leavevmode
	\begin{itemize}
		\item[(a)] The initial distribution $\alpha$ is not needed in Theorem \ref{stochThiele}(b), so it is irrelevant. 
		\item[(b)] 	For the sub-process $(V^i)_{i\in \mathcal{Z}_0}$, the equation in Theorem  \ref{stochThiele}(b) does not depend on  $b^{ij}$, $i \in \mathcal{Z}_0$, $j \in  \mathcal{Z}_1$, and $B^i$, $b^{ij}$, $\Lambda^{ij}$, $i \in \mathcal{Z}_1$, $j \in  \mathcal{Z}$, $i \neq j$. These parameters are therefore irrelevant. 
			
		\item[(c)] If $\Lambda^{kl}$ is changed in a way that preserves the reset points of $\Lambda^{k \largecdot}$, we still have $(V^i)_{i\in \mathcal{Z}}$ as a solution of the stochastic Thiele equation.  Since solutions are unique, see Theorem  \ref{stochThiele}, we obtain the assertion. 
		\item[(d)]  By applying statement (b), we can show  that  $\bar V^i(t)=\bar{\E}_t^i [L(t)]=0$ for  $i \in \mathcal{Z}_1$, $t \geq 0$,  since only the payment processes  $\bar B^j =0$ and $\bar b^{ij} = 0$,   $i,j \in \mathcal{Z}_1$, $i \neq j$, are of relevance here. In particular,  we get
			$ \bar b^{ij} + \bar V^j -  V^i = b ^{ij}+   V^j -  V^i $ for  $i \in \mathcal{Z}_0$, $j \in \mathcal{Z}_1$, so that  $((V^i)_{i\in \mathcal{Z}_0}, (\bar V^i)_{i\in \mathcal{Z}_1})$ solves the stochastic Thiele equation of  $(\bar V^i)_{i\in \mathcal{Z}}$. Since solutions are unique, see Theorem  \ref{stochThiele}, we obtain the assertion.
		\item[(e)] The assumptions imply that $b^{ij} + V^j - V^i = {-} V^i$ for $j \in \mathcal{Z}_1$ and that
		\begin{align*}
		\mathds{1}_{\{  \Lambda^{i\largecdot}(t-)< \infty\}}I^i(t-)V^i(t) = \mathds{1}_{\{  \Lambda^{i\largecdot}(t-)< \infty\}}I^i(t-)\big({(1+\Delta R^i(t))}V^i(t-) + \Delta B^i(t)\big).
		\end{align*}
		Therefore, under the described changes, we still have $(V^i)_{i\in\mathcal{Z}_0}$ as a solution of the stochastic Thiele equation. The assertion follows by uniqueness in accordance with Theorem~\ref{stochThiele}.
		\item[(f)]  Under the described changes, we still have $(V^i)_{i\in \mathcal{Z}}$ as a solution of the stochastic Thiele equation.  Since solutions are unique, see Theorem~\ref{stochThiele}, we obtain the assertion. 		\qedhere
	\end{itemize}
\end{proof}
\begin{remark}
The invariance of Theorem~\ref{CompTheorem1}(f) has also been verified for absolutely continuous {models} in~\cite{ChristiansenDjehiche2020}, but under the requirement of domination between probability measures. While our approach here is based on canonical constructions and elementary techniques, the approach in~\cite{ChristiansenDjehiche2020} is based on backward stochastic differential equations.
\end{remark}

\begin{example}
In~\cite{Furrer2022}, the representation and computation of so-called scaled cash flows is studied. The basic setup is as follows. Consider $\mathcal{Z}_0 \subset \mathcal{Z}$, let $\mathcal{Z}_1 = \mathcal{Z} \setminus \mathcal{Z}_0$, and suppose both $\mathcal{Z}_0$ and $\mathcal{Z}_1$ are non-empty. Further, suppose that $\Lambda^{ji} = 0$ for all $i \in \mathcal{Z}_0$, $j\in\mathcal{Z}_1$, so that $\mathcal{Z}_1$ is absorbing. Let $\tilde{C} \in \mathbb{Y}$ with canonical representation $(\tilde{B},\tilde{b})$. Denote by $\tau$ the first hitting time of $\mathcal{Z}_1$ by $Z$. The scaled payments of interest are given by
\begin{align*}
{C}(\d t)
=
\rho(\tau,Z_\tau)^{\mathds{1}_{\{\tau \leq t\}}} \tilde{{C}}(\d t),
\end{align*}
where each factor $t \mapsto \rho(t,j)$, $j \in \mathcal{Z}_1$, is assumed to be $\mathcal{F}^-$-adapted, strictly positive, and below one. The main result of~\cite{Furrer2022} is an invariance result for $(V_i)_{i \in \mathcal{Z}_0}$, which retains the canonical payments $(\tilde{B},\tilde{b})$ at the cost of an adjustment to the (cumulative) transition rates and the introduction of an artificial cemetery state.

This invariance could also have been derived based on the present results, namely the uniqueness of the stochastic Thiele equation. The details of the argument uses similar techniques as the proof of Theorem~\ref{CompTheorem1} and is therefore omitted. However, at least under absolutely continuous modeling, the introduction of an artificial cemetery state in~\cite{Furrer2022} may be avoided; this is directly related to Theorem~\ref{CompTheorem1}(d). Consequently, the assumption that the scaling factors be bounded by one is obsolete.
\end{example}

\section*{Acknowledgements}

Parts of the research presented in this paper were carried out while Christian Furrer was a Junior Fellow at the Hanse-Wissenschaftskolleg Institute for Advanced Study in Delmenhorst, Germany.

\bibliographystyle{apalike}
\bibliography{references.bib}

\begin{thebibliography}{}

\bibitem[Ahmad et~al., 2023]{AhmadBladtFurrer2023}
Ahmad, J., Bladt, M., and Furrer, C. (2023).
\newblock {Aggregate Markov models in life insurance: Properties and
  valuation}.
\newblock {\em Insurance: Mathematics and Economics}, 113:50--69.

\bibitem[Bathke and Furrer, 2024]{BathkeFurrer2024}
Bathke, T. and Furrer, C. (2024).
\newblock {Non-parametric estimators of scaled cash flows}.
\newblock Preprint, arXiv:2408.13176.

\bibitem[Bladt et~al., 2023]{BladtMincaPeralta2023}
Bladt, M., Minca, A., and Peralta, O. (2023).
\newblock {Pathwise and distributional approximations of semi-Markov
  processes}.
\newblock Preprint, arXiv:2312.06784.

\bibitem[Buchardt et~al., 2015]{BuchardtMollerSchmidt2015}
Buchardt, K., M{\o}ller, T., and Schmidt, K. (2015).
\newblock Cash flows and policyholder behaviour in the semi-{M}arkov life
  insurance setup.
\newblock {\em Scandinavian Actuarial Journal}, 2015(8):660--688.

\bibitem[Cantelli, 1914]{Cantelli1914}
Cantelli, F. (1914).
\newblock Genesi e costruzione delle tavole di mutualit{\`a}.
\newblock {\em Bolletino di Notizie sul Credito e sulla Previdenza},
  3/4:247--303.

\bibitem[Christiansen, 2012]{Christiansen2012}
Christiansen, M. (2012).
\newblock Multistate models in health insurance.
\newblock {\em Advances in Statistical Analysis}, 96:155--186.

\bibitem[Christiansen, 2021]{Christiansen2021b}
Christiansen, M. (2021).
\newblock On the calculation of prospective and retrospective reserves in
  non-{M}arkov models.
\newblock {\em European Actuarial Journal}, 11(2):441--462.

\bibitem[Christiansen et~al., 2014]{ChristiansenDenuitDhaene2014}
Christiansen, M., Denuit, M., and Dhaene, J. (2014).
\newblock Reserve-dependent benefits and costs in life and health insurance
  contracts.
\newblock {\em Insurance: Mathematics and Economics}, 57:132--137.

\bibitem[Christiansen and Djehiche, 2020]{ChristiansenDjehiche2020}
Christiansen, M. and Djehiche, B. (2020).
\newblock Nonlinear reserving and multiple contract modifications in life
  insurance.
\newblock {\em Insurance: Mathematics and Economics}, 93:187--195.

\bibitem[Christiansen and Furrer, 2021]{ChristiansenFurrer2021}
Christiansen, M. and Furrer, C. (2021).
\newblock {Dynamics of state-wise prospective reserves in the presence of
  non-monotone information}.
\newblock {\em Insurance: Mathematics and Economics}, 97:81--98.

\bibitem[Cohen and Elliott, 2012]{CohenElliot2012}
Cohen, S. and Elliott, R. (2012).
\newblock {Existence, uniqueness and comparisons for BSDEs in general spaces}.
\newblock {\em The Annals of Probability}, 40(5):2264--2297.

\bibitem[Furrer, 2022]{Furrer2022}
Furrer, C. (2022).
\newblock Scaled insurance cash flows: representation and computation via
  change of measure techniques.
\newblock {\em Finance and Stochastics}, 26(2):359--382.

\bibitem[Gatzert, 2009]{Gatzert2009}
Gatzert, N. (2009).
\newblock Implicit options in life insurance: An overview.
\newblock {\em Zeitschrift f{\"u}r die gesamte Versicherungswissenschaft},
  98(2):141--164.

\bibitem[Hoem, 1988]{Hoem1988}
Hoem, J. (1988).
\newblock {The versatility of the Markov chain as a tool in the mathematics of
  life insurance}.
\newblock In {\em Transactions of the 23rd International Congress of
  Actuaries}, pages 171--202, Helsinki, Finland.

\bibitem[Jacobsen, 2006]{Jacobsen2006}
Jacobsen, M. (2006).
\newblock {\em Point process theory and applications: {M}arked point and
  piecewise deterministic processes}.
\newblock Probability and its Applications. Birkh{\"a}user, Boston.

\bibitem[Lidstone, 1905]{Lidstone1905}
Lidstone, G. (1905).
\newblock Changes in pure premium values consequent upon variations in the rate
  of interest or rate of mortality.
\newblock {\em Journal of the Institute of Actuaries}, 3.

\bibitem[Linnemann, 1993]{Linnemann1993}
Linnemann, P. (1993).
\newblock On the application of {T}hiele's differential equation in life
  insurance.
\newblock {\em Insurance: Mathematics and Economics}, 13:63--74.

\bibitem[Maltzhan et~al., 2021]{Maltzahn2021}
Maltzhan, N., Hoff, R., Aalen, O., Mehlum, I., Putter, H., and Gran, J. (2021).
\newblock A hybrid landmark {Aalen}--{Johansen} estimator for transition
  probabilities in partially non-{Markov} multi-state models.
\newblock {\em Lifetime Data Analysis}, 27(4):737--760.

\bibitem[Milbrodt and Helbig, 2008]{MilbrodtHelbig2008}
Milbrodt, H. and Helbig, M. (2008).
\newblock {\em {Mathematische Methoden der Personenversicherung}}.
\newblock De Gruyter, Berlin, New York.

\bibitem[Milbrodt and Stracke, 1997]{MilbrodtStracke1997}
Milbrodt, H. and Stracke, A. (1997).
\newblock Markov models and {T}hiele's integral equations for the prospective
  reserve.
\newblock {\em Insurance: Mathematics and Economics}, 19(3):187--235.

\bibitem[Neveu, 1965]{Neveu1965}
Neveu, J. (1965).
\newblock {\em {Mathematical Foundations of the Calculus of Probability}}.
\newblock Holden-Day Series in Probability and Statistics. Holden-Day,
  California.

\bibitem[Niemeyer, 2015]{Niemeyer2015}
Niemeyer, A. (2015).
\newblock {Safety margins for systematic biometric and financial risk in a
  semi-Markov life insurance framework}.
\newblock {\em Risks}, 3:35--60.

\bibitem[Nie{\ss}l et~al., 2023]{Niessl2023}
Nie{\ss}l, A., Allignol, A., Beyersmann, J., and Mueller, C. (2023).
\newblock {Statistical inference for state occupation and transition
  probabilities in non-Markov multi-state models subject to both random
  left-truncation and right-censoring}.
\newblock {\em Econometrics and Statistics}, 25:110--124.

\bibitem[Norberg, 1985]{Norberg1985}
Norberg, R. (1985).
\newblock Lidstone in the continuous case.
\newblock {\em Scandinavian Actuarial Journal}, 1985:27--34.

\bibitem[Norberg, 1991]{Norberg1991}
Norberg, R. (1991).
\newblock Reserves in life and pension insurance.
\newblock {\em Scandinavian Actuarial Journal}, 1991:3--24.

\bibitem[Norberg, 1992]{Norberg1992}
Norberg, R. (1992).
\newblock {Hattendorff's theorem and Thiele's differential equation
  generalized}.
\newblock {\em Scandinavian Actuarial Journal}, 1992:2--14.

\bibitem[Norberg, 1996]{Norberg1996}
Norberg, R. (1996).
\newblock {Addendum to Hattendorff's Theorem and Thiele's Differential Equation
  Generalized, SAJ 1992, 2--14}.
\newblock {\em Scandinavian Actuarial Journal}, 1996:50--53.

\bibitem[Overgaard, 2019]{Overgaard2019b}
Overgaard, M. (2019).
\newblock State {O}ccupation {P}robabilities in {N}on-{Markov} {M}odels.
\newblock {\em Mathematical Methods of Statistics}, 28(4):279--290.

\bibitem[Putter and Spitoni, 2018]{PutterSpitoni2018}
Putter, H. and Spitoni, C. (2018).
\newblock Non-parametric estimation of transition probabilities in non-{Markov}
  multi-state models: The landmark {Aalen–Johansen} estimator.
\newblock {\em Statistical Methods in Medical Research}, 27(7):2081--2092.

\bibitem[Ramlau-Hansen, 1988]{RamlauHansen1988}
Ramlau-Hansen, H. (1988).
\newblock The emergence of profit in life insurance.
\newblock {\em Insurance: Mathematics and Economics}, 7(4):225--236.

\end{thebibliography}

\appendix

\end{document}